\def\version{}%{\tiny Version January 20, 2020 (typeset: \today)}}
\DeclareFontFamily{OT1}{eusb}{} \DeclareFontShape{OT1}{eusb}{m}{n} {<5> <6> <7> <8> <9> <10> <11> <12> <14.4> eusb10}{}
\DeclareMathAlphabet{\eusb}{OT1}{eusb}{m}{n}
\DeclareFontFamily{OT1}{eusm}{} \DeclareFontShape{OT1}{eusm}{m}{n} {<5> <6> <7> <8> <9> <10> <11> <12> <14.4> eusm10}{}
\DeclareMathAlphabet{\eusm}{OT1}{eusm}{m}{n}
\DeclareFontFamily{OT1}{eufm}{} \DeclareFontShape{OT1}{eufm}{m}{n} {<5> <6> <7> <8> <9> <10> <11> <12> <14.4> eufm10}{}
\DeclareMathAlphabet{\mathfrak}{OT1}{eufm}{m}{n}
\DeclareFontFamily{OT1}{fraktura}{}
\DeclareFontShape{OT1}{fraktura}{m}{n} {<5> <6> <7> <8> <9> <10> <11> <12> <13> <14.4> [1.1] eufm10}{}
\DeclareMathAlphabet{\fraktura}{OT1}{fraktura}{m}{n}
\DeclareFontFamily{OT1}{cmfi}{} \DeclareFontShape{OT1}{cmfi}{m}{n} {<5> <6> <7> <8> <9> <10> <11> <12> <13> <14.4> [0.9] cmfi10}{}
\DeclareMathAlphabet{\cmfi}{OT1}{cmfi}{b}{n}
\DeclareFontFamily{OT1}{cmss}{} \DeclareFontShape{OT1}{cmss}{m}{n} {<5> <6> <7> <8> <9> <10> <11> <12> <13> <14.4> cmss10}{}
\DeclareMathAlphabet{\cmss}{OT1}{cmss}{m}{n}
\newtheoremstyle{thm}{1.5ex}{1.5ex}{\itshape\rmfamily}{} {\bfseries\rmfamily}{}{2ex}{}
\newtheoremstyle{def}{1.5ex}{1.5ex}{\rmfamily\sl}{} {\bfseries\rmfamily}{}{2ex}{}
\newtheoremstyle{rem}{1.3ex}{1.3ex}{\rmfamily}{} {\bfseries\rmfamily}{}{2ex}{}
\newtheoremstyle{ass}{1.5ex}{1.5ex}{\rmfamily}{} {\bfseries\rmfamily}{}{2ex}{}
\newenvironment{proofsect}[1] {\vskip0.1cm\noindent{\rmfamily\itshape#1.}}{\qed\vspace{0.15cm}}
\theoremstyle{thm}
\newtheorem{theorem}{Theorem}[section]
\newtheorem{lemma}[theorem]{Lemma}
\newtheorem{proposition}[theorem]{Proposition}
\newtheorem*{Main Theorem}{Main Theorem.}
\newtheorem{corollary}[theorem]{Corollary}
\newtheorem{problem}[theorem]{Problem}
\newtheoremstyle{named}{}{}{\itshape}{}{\bfseries}{}{.5em}{\thmnote{#3}}
\theoremstyle{named}
\theoremstyle{def}
\newtheorem{definition}[theorem]{Definition}
\theoremstyle{ass}
\newtheorem{assumption}[theorem]{Assumption}
\theoremstyle{rem}
\newtheorem{remark}[theorem]{{Remark}}
\numberwithin{equation}{section}
\renewcommand{\section}{\secdef\sct\sect}
\newcommand{\sct}[2][default]{\nopagebreak\refstepcounter{section}
\addcontentsline{toc}{section}
{{\tocsection {}{\thesection}{\!\!\!\!#1\dotfill}}{}}
\vspace{0.7cm}
\nopagebreak
\centerline{ %\large
\scshape\arabic{section}.\ #1} \nopagebreak\vspace{0.2cm}\nopagebreak}
\newcommand{\sect}[1]{
\vspace{0.4cm} \centerline{\large\scshape\rmfamily #1}
\vspace{0.2cm}}
\renewcommand{\subsection}{\secdef\subsct\sbsect}
\newcommand{\subsct}[2][default]{\refstepcounter{subsection}
\addcontentsline{toc}{subsection}
{{\tocsection{\!\!}{\hspace{1.2em}\thesubsection}{\!\!\!\!#1\dotfill}}{}}
\nopagebreak\vspace{0.45\baselineskip} {\flushleft\bf
\arabic{section}.\arabic{subsection}~\bf #1.~}
\\*[3mm]\noindent
\nopagebreak}
\newcommand{\sbsect}[1]{\vspace{0.1cm}\noindent
\textbf{#1.~}\vspace{0.1cm}}
\renewcommand{\subsubsection}{
\secdef \subsubsect\sbsbsect}
\newcommand{\subsubsect}[2][default]{
\refstepcounter{subsubsection}
\addcontentsline{toc}{subsubsection}{{\tocsection{\!\!}
{\hspace{3.05em}\thesubsubsection}{\!\!\!\!#1\dotfill}}{}}
\nopagebreak
\vspace{0.15\baselineskip} \nopagebreak {\flushleft\rmfamily
\itshape\arabic{section}.\arabic{subsection}.\arabic{subsubsection}
\ \rmfamily #1\/.}\ }
\newcommand{\sbsbsect}[1]{\vspace{0.1cm}\noindent
\rmfamily \itshape
\arabic{section}.\arabic{subsection}.\arabic{subsubsection} \
\sffamily #1\/.\ }
\renewcommand{\caption}[1]{%
\vglue0.5cm
\refstepcounter{figure}
\begin{minipage}{0.9\textwidth}\small {\sc Figure~\thefigure. }#1\end{minipage}}
\newcommand{\supp}{\operatorname{supp}}
\newcommand{\textd}{\text{\rm d}\mkern0.5mu}
\newcommand{\texte}{\text{\rm  e}\mkern0.7mu}
\newcommand{\Cov}{\text{\rm \Cov}}
\newcommand{\CC}{\mathcal C}
\newcommand{\DD}{\mathcal D}
\newcommand{\EE}{\mathcal E}
\newcommand{\FF}{\mathcal F}
\newcommand{\E}{\mathbb E}
\newcommand{\BbbP}{\mathbb P}
\newcommand{\Q}{\mathbb Q}
\newcommand{\R}{\mathbb R}
\newcommand{\Z}{\mathbb Z}
\newcommand{\twoeqref}[2]{(\ref{#1}--\ref{#2})}
\newcommand{\cc}{{\text{\rm c}}}
\newcommand{\hate}{\hat{\text{\rm e}}}
\newcommand{\w}{\omega}
\newcommand{\IA}{\mathds{1}}
\newcommand{\Fs}{\mathscr{F}}
\def\myffrac#1#2 in #3{\raise 2.6pt\hbox{$#3 #1$}\mkern-1.5mu\raise 0.8pt\hbox{$#3/$}\mkern-1.1mu\lower 1.5pt\hbox{$#3 #2$}}
\newcommand\frakp{\mathfrak p}
\newcommand{\wt}{\widetilde}
\newcommand{\laweq}{\,\overset{\text{\rm law}}=\,}
\newcommand{\hm}{\mu}%{{\hat{\mu}}}
\newcommand{\hn}{\nu}%{{\hat{\nu}}}
\newcommand{\tn}{\nu}
\newcommand{\tC}{C}
\begin{document}
\allowdisplaybreaks

\vglue-5mm
\title[Long-range random conductance models \hfill \version\hfill]
{\large Quenched Invariance Principle for a class of random conductance models with long-range jumps}

\author[\hfill  \version \hfill Biskup, Chen, Kumagai and Wang]
{Marek~Biskup$^1$, Xin Chen$^2$, Takashi Kumagai$^3$ and Jian Wang$^4$}
\thanks{\hglue-4.5mm\fontsize{9.6}{9.6}
\selectfont\copyright\,\textrm{2021}\ \ \textrm{M.~Biskup, X.~Chen, T.~Kumagai and J. Wang.
Reproduction, by any means, of the entire
article for non-commercial purposes is permitted without charge.\vspace{2mm}}}
\maketitle

\vspace{-5mm}
\centerline{\textit{
$^1$Department of Mathematics, UCLA, Los Angeles, California, USA}}
\centerline{\textit{$^2$Department of Mathematics, Shanghai Jiao Tong University, Shanghai, P.R. China}}
\centerline{\textit{
$^3$Research Institute in Mathematical Sciences, Kyoto University, Kyoto, Japan}}
\centerline{\textit{$^4$College of Mathematics and Informatics \& Fujian Key Laboratory of
Mathematical}}
\centerline{\textit{
Analysis and Applications \& Center for Applied Mathematics of Fujian Province (FJNU),}}
\centerline{\textit{Fujian Normal University, Fuzhou, P.R. China}}

\vskip0.5cm
\begin{quote}
\footnotesize \textbf{Abstract:}
We study random walks on~$\Z^d$  (with $d\ge 2$) among stationary ergodic   random conductances $\{C_{x,y}\colon x,y\in\Z^d\}$ that  permit jumps of arbitrary length.
Our focus is on the Quenched Invariance Principle (QIP) which we establish by a combination of corrector methods, functional inequalities and  heat-kernel technology assuming that the $p$-th moment of
$\sum_{x\in\Z^d}C_{0,x}|x|^2$ and $q$-th moment of $1/C_{0,x}$ for~$x$ neighboring the origin are finite for some $p,q\ge1$ with $p^{-1}+q^{-1}<2/d$.  In particular, a QIP thus holds for random walks on long-range percolation graphs with connectivity exponents larger than~$2d$
in all $d\ge2$, provided all the nearest-neighbor edges are present.
  Although still limited by moment conditions, our method of proof is novel in that it avoids proving everywhere-sublinearity of the corrector. This is relevant because we  show that, for long-range percolation with exponents between $d+2$ and~$2d$, the corrector exists but fails to be sublinear everywhere. Similar examples are
constructed also for nearest-neighbor, ergodic conductances in  $d\ge3$  under the conditions complementary to those of the recent work of P. Bella and M. Sch\"affner \cite{BelSch}.  These examples elucidate the limitations of elliptic-regularity techniques that underlie much of the recent progress on these problems.
\end{quote}

\section{Introduction}
\noindent
Random walks among random conductances have seen much interest in recent years. The term ``random walk'' actually refers to a Markov chain whose states will be confined, for the purpose of the present paper, to the $d$-dimensional hypercubic lattice~$\Z^d$ and the transition probabilities $\cmss P(x,y)$ 
determined by a collection $\{C_{x,y}\colon x,y\in\Z^d\}$ of non-negative numbers via
\begin{equation}
\label{E:1.1}
\cmss P(x,y):=\frac{C_{x,y}}{\pi(x)},\quad\text{where}\quad\pi(x):=\sum_{y\in\Z^d}C_{x,y},
\end{equation}
where $\pi(x)\in(0,\infty)$ is assumed for all~$x\in\Z^d$.
The symmetry condition
\begin{equation}
C_{x,y}=C_{y,x},\qquad x,y\in\Z^d,
\end{equation}
is imposed and the common value is
called the \emph{conductance} of unordered edge $\langle x,y\rangle$. As is easily checked, $\pi$ is then a reversible measure for the chain. The setting naturally includes the cases when only nearest-neighbor jumps occur, i.e., those for which $C_{x,y}:=0$ whenever~$x$ and~$y$ are not nearest neighbors in~$\Z^d$ (this includes $x=y$).

Many ``ordinary'' random walks are naturally covered by the above setting; notably, the simple random walk when $C_{x,y}$ is set to one for nearest neighbors $x$ and~$y$ and zero otherwise, or random walks with $\alpha$-stable tail when $C_{x,y}:=|x-y|^{-(d+\alpha)}$, where $\alpha\in (0,2)$ and~$|x|$ denotes the Euclidean norm of~$x$. Our interest here is in the situation when $\{C_{x,y}\colon x,y\in\Z^d\}$ is itself random. Writing~$\BbbP$ for the law of the conductances and~$\E$ for its expectation, we impose:

\begin{assumption}
\label{ass1}
Throughout we assume:
\begin{enumerate}
\item[(1)] $\BbbP$ is stationary and jointly ergodic with respect to the shifts of~$\Z^d$, and
\item[(2)] denoting the origin of~$\Z^d$ by ``$0$'', we have $\E\,C_{0,0}<\infty$,
\end{enumerate}
\end{assumption}

\noindent
where we allow $C_{0,0}>0$ to permit the walk to ``hold'' in place but require finite mean to ensure that holding does not dominate its behavior.
In this framework we then ask what conditions guarantee various properties known for the ``ordinary'' random walks with symmetric jumps, e.g., lack of speed, recurrence/transience, etc. Here we will focus on the validity of an \emph{Invariance Principle}, i.e., convergence of the path law to Brownian motion under the diffusive scaling of space and time.

The so called \emph{Annealed} (or \emph{Averaged}) Invariance Principle (AIP) has been known since late 1980s (Kipnis and Varadhan~\cite{Kipnis-Varadhan}, De Masi, Ferrari, Goldstein and Wick~\cite{DMFGW1,DMFGW2}). The adjective ``annealed'' refers to the convergence taking place for a \emph{joint} law of the chain and the environment.
With Assumption~\ref{ass1} in force, this convergence was shown under the moment conditions
\begin{equation}
\label{E:1.3}
\E\biggl(\,\sum_{x\in\Z^d}C_{0,x}|x|^2\biggr)<\infty\quad\text{and}\quad\E\Bigl(\frac1{C_{0,x}}\Bigr)<\infty \text{ whenever }|x|=1.
\end{equation}
These are directly linked to the limiting Brownian motion having finite and positive variance and so, in this sense, can be regarded as optimal.

Much effort in the past 15 years went to derivations of \emph{Individual} or \emph{Quenched} Invariance Principle (QIP) where the convergence to Brownian motion takes place for a.e.\ sample of the random conductances. The influential initial study by Sidoravicius and Sznitman~\cite{Sidoravicius-Sznitman}, where a QIP was proved for all uniformly-elliptic nearest-neighbor conductances, elucidated the need for additional ingredients compared to AIP; namely, estimates on the heat kernel. Analyses of the simple random walk on supercritical percolation clusters (Sidoravicius and Sznitman~\cite{Sidoravicius-Sznitman}, Berger and Biskup~\cite{Berger-Biskup}, Mathieu and Piatnitski~\cite{Mathieu-Piatnitski}) then paved the way to a complete resolution of all i.i.d.\ nearest-neighbor random conductance models (Mathieu~\cite{Mathieu-CLT}, Biskup and Prescott~\cite{Biskup-Prescott}, Barlow and Deuschel~\cite{Barlow-Deuschel} and Andres, Barlow, Deuschel and Hambly~\cite{ABDH}).

Compared to the i.i.d.\ cases, our understanding of general non-uniformly elliptic conductances remains only partial and often restricted to special cases. For nearest-neighbor models satisfying Assumption~\ref{ass1}, the restriction may come as a limitation on the spatial dimension. %%:
%% We change : by . here.
Indeed, as shown in
Biskup \cite[Exercise 4.4 and Theorem 4.7]{Biskup-review},  a QIP holds true in $d=1,2$
whenever
\begin{equation}
\label{E:1.5}
|x-y|=1\quad\Rightarrow\quad\E C_{x,y}<\infty\quad\text{and}\quad \E\Bigl(\frac1{C_{0,x}}\Bigr)<\infty.
\end{equation}
These are deemed sharp in light of \eqref{E:1.3} although examples violating \eqref{E:1.5} exist for which QIP fails yet AIP holds (Barlow, Burdzy and Tim\'ar~\cite{BBT}).
Another way to limit the form of the distribution is through decay of correlations. Indeed, Procaccia, Rosenthal and Sapozhnikov~\cite{PRS} proved a QIP in correlated percolation models subject to technical conditions on  correlation~\hbox{decay}.

A third type of restriction comes via moment conditions on individual (nearest-neigh\-bor) conductances. These can be expressed by means of numbers $p,q\ge1$ such that
\begin{equation}
\label{E:1.6qq}
|x-y|=1\quad\Rightarrow\quad
C_{x,y}\in L^p(\BbbP)\quad\text{and}\quad \frac1{C_{x,y}}\in L^q(\BbbP).
\end{equation}
For these Andres, Deuschel and Slowik~\cite{ADS}  proved  a QIP under the condition  $1/p+1/q<2/d$. This extends to a local QIP~\cite{ADS2} and
also to the control of the heat kernel~\cite{ADS3}.
Bella and Sch\"affner~\cite{BelSch}  recently improved the methods of~\cite{ADS} and gave a proof of a QIP under a slightly weaker condition
\begin{equation}
\label{E:1.7**}
\frac1p+\frac1q<\frac2 {d-1}.
\end{equation}
 The key {additional} ingredient for the improvement of the moment condition from \eqref{E:1.6qq} into \eqref{E:1.7**} is
a local boundedness result for finite difference
equations in divergence forms
under essentially optimal moment conditions; see \cite[Theorem 2]{BelSch}.
We will show that  \eqref{E:1.7**}  is, in fact, infinitesimally close to sharp, at least for the method of proof.

\newcommand{\eff}{\text{\rm eff}}

The main goal of the present paper is to push the control of a QIP to include models with arbitrarily large jumps. We will work under the following moment assumption:

\begin{assumption}\label{Asmp2}\it
Assume $d\ge2$ and that there are $p,q\in(1,\infty)$ satisfying
\begin{equation}
\label{E:1.7*}
\frac1p+\frac1q<\frac2d
\end{equation}
such that
\begin{equation}
\label{E:1.8*}
\sum_{x\in\Z^d}C_{0,x}|x|^2\in L^p(\BbbP)
\end{equation}
and
\begin{equation}
\label{E:2.1}
\frac1{C_{0,x}}\in L^q(\BbbP)\text{\rm\ whenever }|x|=1.
\end{equation}
In particular, $C_{0,x}>0$ for all~$|x|=1$ $\BbbP$-a.s.
\end{assumption}

\noindent
Assumption~\ref{Asmp2} is a direct extension of the conditions from the work \cite{ADS}, which is thus subsumed by the present paper (albeit with different proofs). We note that $C_{x,y}>0$ for all nearest-neighbors $x$ and~$y$ ensures that the underlying Markov chain is irreducible.

\section{Main results}
\label{sec2}\noindent
We will invariably work with random collections of conductances $\{C_{x,y}=C_{y,x}\colon x,y\in\Z^d\}$ such that $\pi(x)\in(0,\infty)$ for a.e.\ sample from~$\BbbP$. This ensures that the transition
probability in \eqref{E:1.1} is well defined almost surely in all cases of interest. We will write~$Z:=\{Z_n\colon n\ge0\}$ for the paths of the associated discrete-time Markov chain and use~$P^x$ to denote its law subject to the initial condition~$P^x(Z_0=x)=1$.

\subsection{QIP for general conductances}
As noted above, our main point of interest is the validity of the Quenched Invariance Principle --- or QIP for short --- which we formalize as follows:

\begin{definition}
Let $\CC([0,T])$ denote the space of continuous functions on~$[0,T]$ endowed with the supremum topology. Given a path $\{Z_n\colon n\ge0\}$ of the chain, define
\begin{equation}
\label{E:2.1a}
B^{(n)}(t):=\frac1{\sqrt n}\Bigl(Z_{\lfloor tn\rfloor}+(tn-\lfloor tn\rfloor)(Z_{\lfloor tn\rfloor+1}-Z_{\lfloor tn\rfloor})\Bigr),\qquad t\ge0.
\end{equation}
We will say that a QIP holds if for each~$T>0$ and $\BbbP$-a.e.~realization of the conductances, the law of $B^{(n)}$ induced on $\CC([0,T])$ by~$P^0$ tends weakly, as $n\to\infty$, to that of a Brownian motion whose covariance is non-degenerate and constant~a.s.
\end{definition}

Our main result is then:

\begin{theorem}
\label{thm:1}
Suppose~$d\ge2$. Then
a QIP holds under Assumptions~$\ref{ass1}$ and $\ref{Asmp2}$.
\end{theorem}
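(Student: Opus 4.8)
The plan is to follow the classical corrector-plus-heat-kernel route to a QIP, but --- as advertised in the abstract --- to organize it so that one never needs everywhere-sublinearity of the corrector. First I would pass to the continuous-time, constant-speed random walk (variable-speed would also work) and set up the \emph{environment seen from the particle}, which is a Markov process on the space of conductance configurations with the (reversible, ergodic) stationary measure $\BbbP_\pi$ obtained by tilting $\BbbP$ by $\pi(0)$; Assumption~\ref{Asmp2} together with \eqref{E:1.8*} guarantees $\E\pi(0)<\infty$, so this is a genuine probability measure. On $L^2(\BbbP_\pi)$ one has the Dirichlet form $\EE(f,f)=\tfrac12\E\sum_x C_{0,x}(f(\tau_x\omega)-f(\omega))^2$, and the key algebraic input \eqref{E:1.8*} says that the position functions $x\mapsto x_i$ have finite-energy gradients. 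The standard Kipnis--Varadhan / $H_{-1}$ construction then produces a corrector $\chi\colon\Omega\times\Z^d\to\R^d$, i.e.\ a measurable function with stationary, $L^2(\BbbP_\pi)$ increments and mean zero, such that $\Phi(\omega,x):=x-\chi(\omega,x)$ is harmonic for the random walk generator. This gives the decomposition $Z_n = \Phi(\omega,Z_n) + \chi(\omega,Z_n) = M_n + \chi(\omega,Z_n)$ with $M_n$ an $L^2$ martingale under $P^0_\omega$, whose quadratic variation, by the ergodic theorem for the environment process, converges a.s.\ to a deterministic non-degenerate matrix $\Sigma$ (non-degeneracy because the moment condition on $1/C_{0,x}$ for $|x|=1$ keeps the effective diffusivity bounded below; here is where \eqref{E:2.1} enters). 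By the martingale FCLT, $n^{-1/2}M_{\lfloor tn\rfloor}\Rightarrow \sqrt{\Sigma}\,B_t$.

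It then remains to show the corrector is \emph{negligible along the path}, namely
\[
\max_{0\le k\le n}\frac{|\chi(\omega,Z_k)|}{\sqrt n}\xrightarrow[n\to\infty]{}0\quad\text{in }P^0_\omega\text{-probability, for }\BbbP\text{-a.e.\ }\omega.
\]
This is the crux, and it is exactly the step where I would deviate from \cite{ADS,BelSch}. Rather than proving $\max_{|x|\le n}|\chi(\omega,x)|=o(n)$ pointwise in $x$ (which the paper explicitly says is false in the relevant regime), I would bound the corrector in an averaged sense and then convert the average bound into a path bound using heat-kernel estimates. Concretely: (i) spatial/ergodic averaging gives, for each $\eps>0$, that $n^{-d}\#\{x\in[-n,n]^d\colon |\chi(\omega,x)|>\eps n\}\to 0$ a.s.; (ii) one needs an a.s.\ on-diagonal heat-kernel upper bound of the form $P^0_\omega(Z_{cn^2}=x)\le C n^{-d}$ (or a suitable near-diagonal/Gaussian-type bound), which under Assumption~\ref{Asmp2} --- $p^{-1}+q^{-1}<2/d$ --- should follow from Nash/Moser-type functional inequalities for the weighted graph, obtained via weighted Sobolev inequalities with the weights controlled by the $L^p$ and $L^q$ moments as in the by-now-standard machinery (this is where the moment exponents are actually consumed). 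Combining (i), (ii), a union bound over the $O(n^2)$ time steps via the semigroup property, plus a maximal inequality / the fact that $k\mapsto|\chi(\omega,Z_k)|$ does not jump too wildly (controlled again by the heat kernel and the tail of the jump distribution from \eqref{E:1.8*}), upgrades the averaged smallness to the desired uniform-in-time smallness.

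I expect the heat-kernel step to be the main obstacle, for two intertwined reasons. First, the jumps are of \emph{arbitrary length}, so the walk is not a finite-range perturbation of a nearest-neighbor walk and the usual on-diagonal bounds must be re-derived; one must show that the long-range part of the conductances, controlled only in $L^p$ through $\sum_x C_{0,x}|x|^2$, does not destroy the Nash inequality --- presumably by splitting each conductance into a ``small-range'' part (handled by weighted Sobolev) and a ``large-range'' part (handled as a bounded, a.s.\ finite perturbation of the Dirichlet form, using that $\sum_x C_{0,x}$ has finite $p$-th moment hence is a.s.\ finite). Second, because we deliberately forgo pointwise sublinearity of $\chi$, the heat-kernel bound must be genuinely \emph{quenched} (valid for a.e.\ $\omega$, for all large $n$), which typically requires a Borel--Cantelli argument over a sequence of scales together with a chaining/regularity estimate to interpolate between scales; making this work uniformly in the starting point along a diffusive-length path, rather than just at the origin, is the delicate part. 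The non-degeneracy of $\Sigma$ is comparatively routine given \eqref{E:2.1}: one tests the variational formula for the effective diffusivity against linear functions and uses the $L^q$ control of $1/C_{0,e}$, $|e|=1$, to bound the Dirichlet energy of the corrector from above, hence $\Sigma$ from below.
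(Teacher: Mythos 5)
Your high-level strategy --- martingale decomposition via the corrector, convergence of the martingale part by the martingale FCLT with variance controlled by the environment ergodic theorem, and negligibility of the corrector using only \emph{sublinearity on average} converted to a path statement via heat-kernel bounds --- matches the paper's route exactly in spirit. You also correctly guess the mechanism for the heat kernel: split the long-range conductances into a ``short-range'' piece controlled by weighted Sobolev inequalities and a ``long-range'' remainder handled as a perturbation of the Dirichlet form; this is essentially the paper's truncation step plus Meyer's construction (Lemma~\ref{nl-1}). However, there are two places where your sketch glosses over the two technical pillars that actually carry the proof.

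First, the \emph{choice of time-change measure} is not innocuous. You write ``pass to the continuous-time, constant-speed random walk (variable-speed would also work)'', but neither of those reference measures ($\IA$ or $\pi$) gives the uniform bounds needed for the Nash inequality and the Davies off-diagonal argument. The paper time-changes by $\hn(x)=\sum_y C_{x,y}|x-y|^2$ (see \eqref{E:3.1}), which is precisely tuned so that
\[
\sup_x\frac1{\hn(x)}\sum_{y:|x-y|\le\kappa R}C_{x,y}|x-y|^2\le 1
\quad\text{and}\quad
\sup_x\frac1{\hn(x)}\sum_{y:|x-y|>\kappa R}C_{x,y}\le\frac{c}{\kappa^2R^2},
\]
i.e.\ the truncated jump kernel has a uniformly bounded second moment in the reference measure, and the tail is of order $R^{-2}$ (Lemma~3.3 and \eqref{e:bound}). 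With your constant-speed or variable-speed choice, the analogous quantity is $\hn(x)$ or $\hn(x)/\pi(x)$, which is only a.s.\ finite and unbounded, and the Davies/Carlen--Kusuoka--Stroock machinery and the exit-time bounds in Section~3 would not go through as written. This is flagged in the paper as the ``key innovation.''

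Second, the upgrade from fixed-time to uniform-in-time smallness of the corrector along the path is not a union bound over time steps, and a ``maximal inequality'' in the usual sense is unavailable here. Each single-time estimate $P^0(|\chi(Y_{tn})|>\eps\sqrt n)=o(1)$ is $o(1)$ but not $o(1/n)$, so summing over $O(n)$ times diverges. The paper avoids this by \emph{never proving} the supremum statement: it proves tightness in $\DD([0,T])$ separately, via quenched exit-time estimates (Propositions~\ref{P:ex}--\ref{C:exit}, built on the truncated heat kernel and Meyer's construction for the untruncated tail) and Aldous's criterion (Proposition~\ref{thm:tightthm}). Then only the single-time, fixed-$t$ convergence $n^{-1/2}|\chi(Y_{tn})|\to0$ is needed, since FDD convergence plus tightness implies the functional limit. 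Your vague ``the path does not jump too wildly'' is precisely this missing tightness ingredient; it is where the localization \eqref{E:3.11eq}--\eqref{E:3.12eq}, the exit-time technology, and the conservativeness lemma (Lemma~\ref{thm:conserv}) all get consumed, and it is the part of the proof that cannot be waved away.
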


As noted earlier, for nearest neighbor conductances that are bounded from below, our results degenerate to those of~\cite{ADS}. An interesting corollary arises in the context of random walks on a family of long-range percolation graphs. These graphs are obtained from a ``nice'' underlying graph, in our case~$\Z^d$, by adding edges independently with probability that depends only on the displacement between the endpoints. While this probability is typically assumed to decay as a power of the distance, our formulation only requires a summability condition.

\begin{corollary}
\label{cor-2.3}
Let~$d\ge2$.
Given a function $\frakp\colon\Z^d\to[0,1]$ such that
\begin{enumerate}
\item[(1)] $\frakp(x)=\frakp(-x)$ for all~$x\in\Z^d$,
\item[(2)] $\frakp(0)=0$ and $\frakp(x)=1$ whenever~$|x|=1$,
\item[(3)] $\sum_{x\in\Z^d}\frakp(x)
|x|^{2p}<\infty$ for some $p>d/2$,
\end{enumerate}
consider a random graph with vertices~$\Z^d$ and an (unoriented) edge between~$x$ and~$y$ present with probability~$\frakp(y-x)$, independently of all other edges. Then a QIP holds for the simple random walk on this graph.
\end{corollary}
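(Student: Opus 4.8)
The plan is to deduce Corollary~\ref{cor-2.3} from Theorem~\ref{thm:1} by realizing the simple random walk on the long-range percolation graph as a random conductance model to which the theorem applies. Set $C_{x,y}:=\Bbbone_{\{\langle x,y\rangle\text{ is an edge}\}}$, so that $\pi(x)=\deg(x)$ counts the edges incident to $x$. Conditions (1) and (2) on $\frakp$ ensure that $\{C_{x,y}\}$ is a symmetric family, that $C_{x,x}=0$, and that $C_{x,y}=1$ for all nearest-neighbor pairs; in particular $\pi(x)\ge 2d>0$ and $C_{0,x}=1$ whenever $|x|=1$, so both Assumption~\ref{ass1}(2) and the lower-moment condition \eqref{E:2.1} hold trivially (for \emph{any} $q$). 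Independence of the edges across displacements, together with the fact that $\frakp$ depends only on $y-x$, gives that $\BbbP$ is stationary and ergodic (indeed mixing) under the $\Z^d$-shifts, so Assumption~\ref{ass1}(1) holds. Thus the only substantive point is to verify the upper-moment condition \eqref{E:1.8*} and to arrange $p,q$ so that \eqref{E:1.7*} is satisfied.

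For the moment bound, write $S:=\sum_{x\in\Z^d}C_{0,x}|x|^2=\sum_{x\in\Z^d}\Bbbone_{\{\langle 0,x\rangle\text{ edge}\}}|x|^2$, a sum of independent (after removing the deterministic nearest-neighbor terms) nonnegative random variables with $\E\,\Bbbone_{\{\langle 0,x\rangle\text{ edge}\}}=\frakp(x)$. The key estimate is that for any integer $k\ge1$,
\begin{equation}
\label{E:cor-moment}
\E\,S^k\le C_k\Bigl(\sum_{x\in\Z^d}\frakp(x)|x|^{2k}+\Bigl(\sum_{x\in\Z^d}\frakp(x)|x|^2\Bigr)^k\Bigr),
\end{equation}
which one proves by expanding $S^k$ as a sum over $k$-tuples $(x_1,\dots,x_k)$, grouping terms according to which coordinates coincide, and using that $\E\prod_{j}\Bbbone_{\{\langle 0,x_j\rangle\}}=\prod_{\text{distinct}}\frakp(x_j)\le \frakp(x_{j_0})$ for any chosen index with the remaining factors bounded by $1$ — the standard moment bound for sums of independent Bernoulli variables. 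Hypothesis (3), namely $\sum_x\frakp(x)|x|^{2p}<\infty$ for some $p>d/2$, then makes the right-hand side of \eqref{E:cor-moment} finite for every integer $k\le p$, and a short interpolation (Hölder, or monotonicity of $L^r$ norms) upgrades this to $S\in L^{p'}(\BbbP)$ for any real $p'<p$; note $\sum_x\frakp(x)|x|^2<\infty$ as well since $|x|^2\le 1+|x|^{2p}$ for $p\ge1$.

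It remains to choose the exponents in Assumption~\ref{Asmp2}. Since the lower-moment condition \eqref{E:2.1} holds for arbitrarily large $q$ (the conductances across nearest-neighbor edges equal $1$), take $q$ as large as needed; then \eqref{E:1.7*} reduces to requiring $1/p'<2/d$, i.e. $p'>d/2$. Because hypothesis (3) supplies some $p>d/2$, we may pick $p'\in(d/2,p)$, and the previous paragraph gives $S\in L^{p'}(\BbbP)$, hence \eqref{E:1.8*} with exponent $p'$. With $p'>d/2$ fixed, choose $q<\infty$ so large that $1/p'+1/q<2/d$; this is possible precisely because $1/p'<2/d$ strictly. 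All hypotheses of Theorem~\ref{thm:1} are now in place for the conductance field $\{C_{x,y}\}$, and the QIP for the Markov chain $Z$ — which is exactly the simple random walk on the long-range percolation graph, since $\cmss P(x,y)=C_{x,y}/\pi(x)=1/\deg(x)$ for each neighbor $y$ of $x$ — follows immediately.

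The only mild obstacle is the moment bound \eqref{E:cor-moment}: one must be a little careful that the combinatorial expansion of $\E\,S^k$ over non-necessarily-distinct tuples is controlled, but this is the classical estimate for moments of sums of independent random variables (a Rosenthal-type inequality in its most elementary form), and the restriction to \emph{integer} $k$ followed by interpolation to real exponents is precisely what lets us exploit (3) for a non-integer $p$. No heat-kernel or corrector input is needed here — the entire content of Corollary~\ref{cor-2.3} is the verification that the percolation conductances meet Assumptions~\ref{ass1} and~\ref{Asmp2}, after which Theorem~\ref{thm:1} does all the work.
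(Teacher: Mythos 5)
Your reduction is correct and matches the paper's strategy: identify the conductances as the edge-indicators, observe that Assumption~\ref{ass1}, the lower-moment bound \eqref{E:2.1} and the choice of arbitrarily large $q$ are all automatic, and reduce the whole corollary to showing $S:=\sum_x C_{0,x}|x|^2\in L^{p'}(\BbbP)$ for some $p'>d/2$ via a Rosenthal-type bound for sums of independent nonnegative variables. So far this is exactly what the paper does.

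However, your implementation of that moment bound has a genuine gap. Your combinatorial expansion of $\E S^k$ over $k$-tuples is valid only for \emph{integer}~$k$, and the subsequent ``short interpolation (H\"older, or monotonicity of $L^r$ norms)'' to real $p'<p$ does not do what you claim: monotonicity of $L^r$-norms gives $S\in L^{p'}$ only for $p'\le\lfloor p\rfloor$, not for $p'\in(\lfloor p\rfloor,p)$, and there is no general way to pass from finitely many integer moments to a larger non-integer moment. This matters precisely in the interesting regime where the hypothesis is tight, i.e.\ when $(d/2,p]$ contains no integer (for instance $d=2$, $p=1.3$, or $d=3$, $p=1.7$): there you get at best $S\in L^{\lfloor p\rfloor}$ with $\lfloor p\rfloor\le d/2$, which is not enough for \eqref{E:1.7*} even after taking $q$ arbitrarily large. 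The paper sidesteps this by centering the sum, applying the Burkholder--Davis--Gundy inequality, and then invoking Lata\l{}a's moment estimate \cite{Lat} for sums of independent random variables, which holds for all real exponents and thus applies directly at the given~$p$. Your proof can be repaired the same way, or by quoting the (non-sharp) Rosenthal inequality for independent nonnegative summands at the non-integer exponent $p$ rather than trying to deduce it from the integer case; either way, the interpolation step as written must be replaced.
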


 To make the setting clear, the conductances in Corollary~\ref{cor-2.3} take only values in~$\{0,1\}$  with $\BbbP(C_{x,y}=1)=\frakp(x-y)$. In each step, the ``simple random walk'' on the graph selects an available neighbor at random. This is meaningful because the  graphs in Corollary~\ref{cor-2.3} are automatically connected (since $\frakp(x)=1$ for $|x|=1$) and of finite degree at every vertex (as ensured by $\sum_{x\in\Z^d}\frakp(x)<\infty$). Restricting attention to power-law decaying connection probabilities, our results show that if
\begin{equation}
\label{E:2.7}
\frakp(x)=|x|^{-s+o(1)},\qquad |x|\to\infty,
\end{equation}
for some $s>2d$, then a QIP holds in $d\ge2$. This is sharp in $d=2$ but weaker than expected in~$d\ge3$ because, based on \eqref{E:1.3}, we expect a QIP to hold for all $s>d+2$.
Note also that, 
since $C_{xy}\in\{0,1\}$, 
\begin{equation}
\E\biggl(\Bigl(\sum_{x\in\Z^d}C_{0,x}|x|^2\Bigr)^p\biggr)\ge
\sum_{x\in\Z^d}\frakp(x)
|x|^{2p}
\end{equation}
whenever~$p\ge1$. Condition~(3) is thus necessary for Assumption~\ref{Asmp2} to hold.

In the regime $s\in(d,d+2)$, the simple random walk 
on the long-range percolation graph is supposed to scale to a stable process with index~$\alpha:=s-d$. This was proved for $\alpha\in(0,1)$  in all $d\ge1$ by Crawford and Sly~\cite{Crawford-Sly1,Crawford-Sly2} under the $L^r$-space topology for any $r\in[1,\infty)$ (which is weaker than the Skorohod topology).  In $d=1$ the regime when a QIP holds extends to all $\alpha>1$, i.e., even beyond the summability of $\sum_{x\in\Z^d}|x|^{2}\frakp(x)$,
cf. \cite[Theorem~1.2]{Crawford-Sly2} (see also Kumagai and Misumi~\cite[Theorem 2.2]{KumMi} concerning heat kernels).
This is due to absence of percolation and the existence of cut-points. A corrector-based approach exists as well (Zhang and Zhang~\cite{Zhang^2}).

We note that, in the regime $s\in (d,2d)$, the long-range percolation graph is rather different from~$\Z^d$. Indeed, as shown by Biskup~\cite{B04,B11} and Biskup and Lin~\cite{BL}, the graph distances grow polylogarithmically with the Euclidean distance and balls in the intrinsic metric thus exhibit stretched-exponential volume growth. When $s=2d$, the scaling of intrinsic metric relative to Euclidean one is only polynomial, but with exponents strictly less than one (Ding and Sly~\cite{DS}). Notwithstanding, for $s>d+2$, these do not seem to affect the asymptotic of the random walk, at least at the level of AIP.

\subsection{Lack of everywhere sublinearity}
\noindent
The second set of our results address limitations of the techniques presently used for proofs of the QIP. This requires introducing
the basic object of stochastic homogenization, the so-called \emph{corrector}~$\chi$. Consider the generator $\cmss L:=\cmss P-\text{id}$ associated with the discrete-time Markov kernel~$\cmss P$. Explicitly, $\cmss L$ acts on finitely-supported $f\colon\Z^d\to\R$ as
\begin{equation}
\label{E:2.4}
(\cmss L f)(x):=\frac{1}{\pi(x)}\sum_{y\in\Z^d}C_{x,y}\bigl[f(y)-f(x)\bigr].
\end{equation}
Under Assumption~\ref{ass1}, the conditions \eqref{E:1.3} permit the construction of
 a random function $\chi\colon\Z^d\to\R^d$ which is characterized by the following properties:
\begin{enumerate}
\item[(1)] normalization $\chi(0)=0$,
\item[(2)] stationarity of increments under the shifts of~$\Z^d$
\begin{equation}
\bigl\{\chi(x)-\chi(y)\colon x,y\in\Z^d\bigr\} \,\,\,\overset{\text{law}}=\,\,\, \bigl\{\chi(x-y)\colon x,y\in\Z^d\bigr\},
\end{equation}
\item[(3)] weighted square-integrability $\E(\sum_{x\in\Z^d}C_{0,x}|\chi(x)|^2)<\infty$,
\item[(4)] harmonicity of the function
\begin{equation}
\label{E:2.5}
\Psi(x):=x+\chi(x)
\end{equation}
in the sense that
\begin{equation}
\label{E:2.6}
(\cmss L\Psi)(x)=0,\qquad x\in\Z^d.
\end{equation}
(For this reason, $\Psi$ is sometimes referred to as ``harmonic coordinate.'')
\end{enumerate}
We refer to, e.g., Biskup~\cite[Proposition~3.7]{Biskup-review} for a detailed exposition and proofs of this otherwise completely classical material.

\begin{remark}
In all QIPs discussed in this paper, the covariance matrix $\Sigma=(\Sigma_{ij})$ of the limiting Brownian motion is related to the corrector via
\begin{equation}
\label{E:sigma}
\Sigma_{ij}=\frac1{\E\pi(0)}\E\Bigl(\sum_{x\in\Z^d}C_{0,x}\bigl(x_i+\hate_i\cdot\chi(x)\bigr)\bigl(x_j+\hate_j\cdot\chi(x)\bigr)\Bigr),
\end{equation}
where $x_i$ denotes the $i$-th Cartesian component of~$x$ and $\hate_i$ denotes the unit vector in the~$i$-th coordinate direction.
Note that $\Sigma$ is non-degenerate and finite under \eqref{E:1.3}; see Proposition~\ref{prop-FDD} for an explicit statement in this vain. 
%%To Marek: What is the meaning of "in this vain"? 
\end{remark}

It it well known (see~\cite[Lemma~4.8]{Biskup-review}) that \eqref{E:1.3} ensures that $\chi$ is sublinear along coordinate directions in the sense that $\frac1n\chi(nx)\to0$ $\BbbP$-a.s.\ as~$n\to\infty$ for each~$x\in\Z^d$. This is what gives a QIP in all $d=1$ situations. In $d\ge2$ this implies \emph{sublinearity on average},
\begin{equation}
\label{E:sub}
\forall\delta>0\colon\quad
\lim_{n\to\infty}\,\frac1{n^d}\sum_{|x|\le n}\IA_{\{|\chi(x)|>\delta n\}}=0,\quad\BbbP\text{-a.s.}
\end{equation}
see~\cite[Proposition~4.15]{Biskup-review}.
A key step underlying all of the aforementioned approaches to QIP is the proof of \emph{sublinearity everywhere},
\begin{equation}
\label{E:1.9}
\lim_{n\to\infty}\,\frac1n\,\max_{x\colon|x|\le n}\bigl|\chi(x)\bigr|=0,\qquad \BbbP\text{-a.s.}
\end{equation}
This is known to be sufficient to get a Brownian limit for diffusively-scaled paths of the Markov chain (see, e.g., Biskup~\cite[Section~4.2]{Biskup-review}, Kumagai~\cite[Section~8.4]{Kumagai-review} or \cite{ADS}).

While our proof of Theorem~\ref{thm:1} avoids everywhere sublinearity, the conditions we work under are still generally necessary for everywhere sublinearity to hold:

\begin{theorem}
\label{thm-2.6}
Let $d\ge3$ and consider the long-range percolation graph obtained from~$\Z^d$ as above with $\frakp$ having the asymptotic \eqref{E:2.7} for some $s\in(d+2,2d)$. Then the corrector is (well defined yet) not sublinear everywhere. \end{theorem}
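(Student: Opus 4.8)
The plan is to exhibit, for long-range percolation with connectivity exponent $s\in(d+2,2d)$, a region of $\Z^d$ on which the corrector $\chi$ is forced to be of order comparable to the scale of that region, contradicting \eqref{E:1.9}. The intuition is that in this regime the graph has long edges that are "rare but not too rare": along a coordinate axis, the origin-to-$x$ graph distance grows only polylogarithmically (Biskup \cite{B04,B11}), yet there are sites near which \emph{no} long edge lands, so that locally the walk behaves like a nearest-neighbor walk whose effective conductances are anomalously small. More precisely, I would look for boxes $Q_n$ of side length $\ell_n\to\infty$, sitting at Euclidean distance $\sim n$ from the origin, such that (i) no edge of length $\ge \ell_n^{\kappa}$ (for suitable $\kappa<1$) is incident to $Q_n$, and (ii) inside $Q_n$ the harmonic coordinate $\Psi=x+\chi$ must bend away from the identity by an amount of order $\ell_n$, hence $\max_{x\in Q_n}|\chi(x)|\gtrsim \ell_n$. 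If one can arrange $\ell_n/n\not\to 0$ along a subsequence — or, more cheaply, place such obstacles at a sequence of scales — then \eqref{E:1.9} fails. The first step, then, is a second-moment / Borel–Cantelli estimate: with $\frakp(x)=|x|^{-s+o(1)}$ and $s<2d$, the expected number of long edges ($|y-x|\ge r$) incident to a box of side $\ell$ at distance $n$ is of order $\ell^d \sum_{|z|\ge r}|z|^{-s+o(1)}\sim \ell^d r^{d-s+o(1)}$; since $d-s<0$, for $\ell$ a slowly growing function of $n$ and $r=\ell^{\kappa}$ this is $\ell^{d+\kappa(d-s)+o(1)}$, which is $\ll 1$ precisely because $s>d+2>d$ makes $\kappa(s-d)>d$ achievable with $\kappa<1$ — wait, one needs $s>2d$ for that; here $s<2d$. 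So the "no long edge at all" event is \emph{not} overwhelmingly likely, and the correct statement is the reverse: such edge-free boxes do occur, with frequency controlled by a large-deviation estimate, and one places the obstacles at the (sparse but infinite) sequence of scales where they appear.

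The second and central step is the lower bound $\max_{Q_n}|\chi|\gtrsim \ell_n$ given the edge-free structure. Here I would use the defining property \eqref{E:2.6}, i.e. $\cmss L\Psi=0$, together with the $L^2$-bound (3) on $\chi$, via an energy/flux argument. On the edge-free box, $\Psi$ restricted to $Q_n$ is (approximately) $\cmss L_{\mathrm{nn}}$-harmonic for the nearest-neighbor generator with the ambient random conductances; since those conductances are, on the complementary event, forced to be small somewhere (this is where the exponent $s$ enters: the local geometry must be "thin" for the corrector-flux budget, quantified by (3) and \eqref{E:sigma}, to close), the harmonic function $\Psi$ must exhibit a gradient that is large relative to $\ell_n$ on a macroscopic fraction of $Q_n$. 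Equivalently: if $\chi$ were $o(\ell_n)$ throughout $Q_n$, then $\Psi\approx x$ on $\partial Q_n$ up to $o(\ell_n)$, and harmonicity (plus a Harnack- or De Giorgi-type oscillation estimate — which, crucially, \emph{does} apply here because we are on a nearest-neighbor piece with controlled conductances) would force $\Psi\approx x$ throughout, contradicting the presence of the anomalously small conductance(s) inside. I would make this quantitative by testing $\cmss L\Psi=0$ against a suitable cutoff of $\Psi-x=\chi$ and extracting a Caccioppoli-type inequality whose left side is forced to be large.

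The main obstacle I anticipate is the interaction between the long-range edges and the local argument: even on the "edge-free" box $Q_n$, edges leaving $Q_n$ are still present (they must be, since $\frakp(z)=1$ for $|z|=1$ and long edges only need to avoid \emph{landing in} $Q_n$), and $\Psi$ is harmonic for the \emph{full} long-range generator, so $(\cmss L\Psi)(x)=0$ couples the values of $\Psi$ inside $Q_n$ to its values far away. Controlling this nonlocal coupling term — showing it is a negligible perturbation of the nearest-neighbor harmonicity on the right scale — is the delicate point; I would handle it by choosing $\ell_n$ and the obstacle density so that the nonlocal contribution, bounded using condition (3) (weighted $L^2$) and the edge-length tail $\sum_{|z|\ge r}\frakp(z)|z|^2\sim r^{2+d-s}$ which is \emph{summable} precisely because $s>d+2$, is of order $o(\ell_n^2)$ in the energy estimate. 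This is exactly why the hypothesis $s>d+2$ (rather than merely $s>d$) is needed, and why the complementary range $s\in(d+2,2d)$ is the natural home of the counterexample: $s>d+2$ makes the nonlocal term controllable, while $s<2d$ is what prevents the moment condition \eqref{E:1.8*} with $p>d/2$ from holding — indeed $\sum\frakp(x)|x|^{2p}<\infty$ requires $s>d+2p>2d$ — so that Theorem~\ref{thm:1} does not apply and the corrector is genuinely allowed to misbehave. Finally, I would assemble the pieces with a Borel–Cantelli argument over the sparse sequence of scales at which edge-free boxes of the required size appear, concluding that \eqref{E:1.9} fails $\BbbP$-a.s.
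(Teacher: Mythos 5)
Your plan is to find ``edge-free'' boxes $Q_n$ of side $\ell_n$ at distance $\sim n$ from the origin, and argue that inside such a box the corrector must be large. This runs into two serious difficulties that you touch on but do not resolve. First, as you yourself notice, the expected number of long edges incident to a box of side $\ell$ at distance $n$, with edge lengths $\ge r$, is $\sim \ell^{d}r^{d-s}$; with $\ell\sim n$ and $r=\ell^{\kappa}$, this is $o(1)$ only if $\kappa(s-d)>d$, i.e.\ $\kappa>d/(s-d)>1$ when $s<2d$. So edge-free boxes of macroscopic diameter simply do not occur on the relevant scale in this regime, and the backup plan (accumulating the effect of many small obstacles over a sparse sequence of scales) is never made quantitative. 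Second, and more fundamentally, your mechanism for \emph{why} the corrector would be large on an edge-free box --- ``anomalously small conductances inside'' forcing a large gradient --- does not exist here: in long-range percolation all present edges have conductance $1$ and all nearest-neighbor edges are present, so there is nothing anomalous \emph{inside} an edge-free box. The heuristic you are reaching for is the one the paper uses in the nearest-neighbor setting (Theorem~\ref{thm-2.8}), but it does not transfer to the percolation case.

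The argument the paper actually uses for Theorem~\ref{thm-2.6} is the opposite of yours and much more local: instead of a region with \emph{no} long edges, you want a single long edge $\langle x,y\rangle$, with $|x|\le n^\gamma$ and $n<|y|\le 2n$, whose far endpoint $y$ has \emph{no other} non-nearest-neighbor edge (the event $A(x,y)$ in Lemma~\ref{e:Lemma5.1}). The Borel--Cantelli-type estimate showing that such ``isolated'' long edges occur at infinitely many scales requires precisely $s<2d$ (so that such edges are common enough) together with $s>d$ (summability), and a careful choice of $\gamma\in(\frac{s-d}{d},\frac{2s-d}{2d}\wedge1)$. Once you have such an edge, the punch line is a one-line consequence of harmonicity at $y$: since $y$ has only $2d$ nearest-neighbor edges plus the one long edge to $x$, $(\cmss L\Psi)(y)=0$ reads $x+\chi(x)-(y+\chi(y))+\sum_{|z|=1}(z+\chi(y+z)-\chi(y))=0$, and if $|\chi|\le K+\epsilon|\cdot|$ everywhere this forces $|y-x|\le(2+4d)K+2d+\epsilon(2+4d)(2n+1)$, contradicting $|y-x|>n-n^\gamma$ for small $\epsilon$. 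No energy estimate, no Caccioppoli inequality, and no control of the nonlocal coupling is needed --- the isolation of $y$ removes all of it. You are missing this ``dangling long edge'' idea, and the edge-free-box route you propose instead does not close.
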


\noindent
Note that this is true despite the conjecture that a QIP holds for all~$s>d+2$.
A natural question is whether such examples can be constructed  also for nearest-neighbor conductances. This is answered in:

\begin{theorem}
\label{thm-2.8}
Suppose $d\ge3$ and let~$p,q\ge1$ be such that
\begin{equation}
\label{E:2.8}
\frac1p+\frac1q>\frac 2{d-1}.
\end{equation}
Then there is a law~$\BbbP$ on nearest-neighbor conductances satisfying Assumption~$\ref{ass1}(1)$ and
\begin{equation}
\label{E:2.9a}
|x|=1\quad\Rightarrow\quad C_{0,x}\in L^p(\BbbP)\quad\text{and}\quad \frac1{C_{0,x}}\in L^q(\BbbP)
\end{equation}
for which the corrector is (well defined yet) not sublinear everywhere.
\end{theorem}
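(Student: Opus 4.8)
The plan is to build $\BbbP$ as a product of the "background" uniformly elliptic environment (all conductances equal to $1$) with a sparse random perturbation that creates, along a single coordinate axis, an effective one-dimensional resistor network whose cumulative resistance grows linearly in the distance. Concretely, I would fix the axis direction $\hate_1$ and, on each edge $\langle n\hate_1,(n+1)\hate_1\rangle$ with $n\in\Z$, independently put conductance $C_n$ drawn from a distribution tuned so that $C_n\in L^p$ and $1/C_n\in L^q$ but with just enough mass near $0$ (and, dually, near $\infty$ on the transverse edges near such sites) that the corrector accumulates. All other conductances are set to $1$, so Assumption \ref{ass1}(1) holds trivially (the law is a shift-invariant product) and \eqref{E:2.9a} is immediate from the choice of the marginal of $C_n$.

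The key computation is to estimate $\hate_1\cdot\chi(n\hate_1)$. The corrector along the axis is not literally a one-dimensional object because the walk can leave the axis, but the point of the construction is to make leaving the axis locally costly near the "bad" edges: near a site where $C_n$ is tiny one also forces the four (or $2(d-1)$) transverse edges incident to that site to have large conductance, i.e.\ $1/C \in L^q$ is kept while $C$ itself may be large there only with small probability — arranged so the transverse large-conductance events do not spoil \eqref{E:1.8*}-type summability but do confine the harmonic coordinate. Then a Dirichlet-form / variational argument (Thomson's principle for the effective resistance between $0$ and $n\hate_1$, compared against the one-dimensional chain of the $C_k$'s) shows that $|\Psi(n\hate_1)-\Psi(0)|$, equivalently $|n+\hate_1\cdot\chi(n\hate_1)|$, is comparable to $\sum_{k=0}^{n-1} 1/C_k$ up to multiplicative constants. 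By the strong law of large numbers this behaves like $\E(1/C_0)\, n$ if $1/C_0\in L^1$; to get a genuine failure of \eqref{E:1.9} one instead wants this partial sum to be $\gg n$ along a subsequence, which is exactly what \eqref{E:2.8} buys: choosing the tail of $1/C_0$ to be regularly varying with index determined by $q$ (so $1/C_0\in L^q$ but only barely), and simultaneously spending the budget $1/p$ on rare large values of $C_0$ so that the transverse confinement costs nothing asymptotically, one can make $\limsup_n \frac1n\sum_{k=0}^{n-1}1/C_k=\infty$ a.s. The arithmetic relating the admissible tail exponents to the dimension is precisely where the threshold $1/p+1/q>2/(d-1)$ enters — it is the same bookkeeping, run in reverse, that produces the Bella–Sch\"affner condition \eqref{E:1.7**}, and I would set it up so that the borderline case of \cite{BelSch} is the borderline case here.

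The main obstacle is the second part of the previous paragraph: controlling the corrector's transverse excursions well enough to turn the heuristic "effective one-dimensional resistance" into a rigorous lower bound on $\max_{|x|\le n}|\chi(x)|$. The clean way is to avoid estimating $\chi$ directly and instead use the variational characterization: the corrector minimizes the Dirichlet energy $\E(\sum_x C_{0,x}|\nabla(\mathrm{id}+\chi)|^2)$ among shift-covariant fields, and $\Psi$ restricted to a large box solves a Dirichlet problem whose boundary data is linear; a monotonicity (Rayleigh) argument — deleting all transverse edges except those we deliberately strengthened, and deleting all off-axis conductances far from the axis — lower-bounds the spread of $\Psi$ along the axis by the spread of the harmonic coordinate of the reduced network, which is the explicit one-dimensional chain. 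One has to check two things carefully: (i) that this pruning is compatible with the a.s.\ existence of the corrector (guaranteed since \eqref{E:1.3}, hence \eqref{E:2.1}, holds for the constructed $\BbbP$), and (ii) that the reduced network's corrector genuinely captures a macroscopic fraction of $\Psi(n\hate_1)$ rather than being overwhelmed by the bulk — this is handled by a Borel–Cantelli argument placing, at a density of scales $n_j$, a window in which $\sum 1/C_k$ is anomalously large, using independence of the $C_n$'s across disjoint windows. Combining (i)–(ii) gives $\limsup_n \frac1n|\chi(n\hate_1)|=\infty$ a.s., contradicting \eqref{E:1.9} and proving the theorem.
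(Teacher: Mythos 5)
Your proposal relies on an effective one-dimensional resistance heuristic that does not survive scrutiny, for two independent reasons. First, the quantitative obstruction is fatal: the hypothesis $1/C_{0,x}\in L^q(\BbbP)$ with $q\ge1$ already forces $\E(1/C_{0,x})<\infty$, and since your axis conductances $\{C_k\}$ are i.i.d., the strong law of large numbers gives $\frac1n\sum_{k<n}1/C_k\to\E(1/C_0)<\infty$ a.s. --- so $\limsup_n\frac1n\sum_{k<n}1/C_k$ is finite a.s.\ no matter how you tune the tail of $1/C_0$, and there is no room to make the cumulative one-dimensional resistance superlinear while honoring the moment condition. Second, the structural picture is backwards: in $d\ge2$, current routes \emph{around} an isolated low-conductance axis edge, and making the adjacent transverse edges have \emph{large} conductance only makes escaping the axis easier, so it does not ``confine the harmonic coordinate''; confinement requires \emph{small} transverse conductance. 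Nor does Rayleigh monotonicity close the argument: it is a statement about effective resistance, not about the values of the solution of a Dirichlet problem, so deleting edges does not monotonically control $\Psi(n\hate_1)-\Psi(0)$.

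The paper's mechanism is in a sense the opposite of yours. At a sparse ergodic family of random locations (governed by a hierarchy of independent Bernoulli fields with $\BbbP(\xi_L(x)=1)=L^{-d}$) it plants ``traps'': thin tubes $D=\{x+j\hate_1\colon j=0,\dots,L_k\}$ whose internal edges get \emph{very large} conductance $b_{L_k}=L_k^{(d-1)/p'}$, while all edges \emph{leaving} the tube get \emph{very small} conductance $a_{L_k}=L_k^{-(d-1)/q'}$, with $p'>p$, $q'>q$ chosen so that $1/p'+1/q'>2/(d-1)$. Inside such a well-insulated superconductor, the Dirichlet-form comparison for the harmonic coordinate $\Psi$ --- the series lower bound $\EE_D(\Psi)\ge b_{L_k}L_k^{-1}|\Psi(x+L_k\hate_1)-\Psi(x)|^2$ against the constant-test-function upper bound $\EE_D(\Psi)\le a_{L_k}\sum_{y\in\partial D}|\Psi(y)-\Psi(x)|^2$ --- forces $\Psi$ to be nearly flat on $D$. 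If $\chi$ were sublinear everywhere, then $\Psi$ would have spread $\gtrsim L_k$ across $D$ while $|\Psi(y)-\Psi(x)|\lesssim L_k$ on $\partial D$, giving $b_{L_k}L_k\lesssim a_{L_k}L_k^3$, i.e.\ $1/p'+1/q'\le 2/(d-1)$, a contradiction. The threshold $2/(d-1)$ thus emerges from tube geometry alone (one line of $L$ internal edges versus $O(L)$ boundary edges), not from tail-exponent bookkeeping on a one-dimensional chain; and a Borel--Cantelli argument places such a tube at distance $\asymp L_k$ from the origin for infinitely many $k$ a.s., which is what kills \eqref{E:1.9}.
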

Modulo a boundary case, condition \eqref{E:2.8} is
complementary to \eqref{E:1.7**} under which
Bella and Sch\"affner \cite{BelSch} proved that the corrector is sublinear everywhere and thus a QIP holds. While Theorem~\ref{thm-2.8} presents counterexamples only to the method of proof, rather than the QIP itself, it makes it unlikely that the elliptic-regularity methods underlying \cite{ADS,BelSch} would ever yield a proof of a QIP  in nearest-neighbor conductance models under the presumably optimal conditions \eqref{E:1.5}.  It appears that more promising strategies are to either try to infer a QIP directly from the AIP (which is known to hold under \eqref{E:1.5}) or restrict proofs of sublinearity of the corrector to just the set of vertices visited by the random walk. An attempt in the latter direction has been made by Ba and Mathieu~\cite{Ba-Mathieu}, albeit for diffusions in periodic environments.

\subsection{Main ideas}
\noindent
 Although our proofs are based on a combination of the corrector method with heat-kernel technology, our strategy is somewhat different from that used in proofs of QIPs so far. Through the use of functional inequalities we first control the first exit times of the walk from large balls. These are used to prove tightness of diffusively-scaled Markov-chain paths. The proof of a QIP then boils down to the proof of a quenched Central Limit Theorem (CLT). For this we use the corrector method but, since this is ``just'' a CLT, with everywhere sublinearity replaced by sublinearity on average.

As usual, we work primarily with continuous-time versions of our random walk. A key innovation is the use of
\begin{equation}
\label{E:3.1}
\hn(x):=\sum_{y\in\Z^d}C_{x,y}|x-y|^2
\end{equation}
as the time-change measure for the walk. The need for this particular normalization was discovered in the derivation of off-diagonal heat-kernel estimates using the so called Davies method; cf the proof of Proposition~\ref{thm:hitest}. Another instance where this measure naturally appears are estimates on Dirichlet forms of spatially-mollified functions; see the proof of Proposition~\ref{P:Sob}.  Notwithstanding, the use of the time change by~$\hn$ is purely technical. In particular, a QIP will hold for all time-change measures that have finite expectation under the invariant law from the point of the particle.

Unlike the recent work~\cite{ADS,ADS2} on QIPs under moment conditions, in order to control the heat kernel and exit probabilities we do not use complicated inductive schemes such as Moser or De Georgi iterations. Instead, we base our argument on \emph{localization}, which amounts to restricting jumps larger than unity to only a finite ``active'' ball, and \emph{truncation}, by which we discard jumps larger than a constant (called~$\kappa$ below) multiple of the ``active'' ball radius. The localization helps us control ``small'' jumps using Sobolev inequalities and standard techniques from heat kernel theory. The contribution of  ``large'' jumps is managed with the help of so called Meyer's construction  (see~\cite{Mey75, BGK}) which, to put simply, is a way to control the transition probabilities of a Markov process with jumps by explicating, just as in the integral form of the Kolmogorov backward equation, the first ``large'' jump of the process.

While the idea to combine localization argument with Meyer's construction is drawn from
a recent paper by some of the authors \cite{CKW} (see Section 2.2 therein for more details), their extension to the present context requires non-trivial adaptations and generalizations.
A key point for us is that localization enables
a class of scale dependent Sobolev inequalities and Davies's method that yield estimates  for the heat kernel and exit times (see Propositions~\ref{P:Sob} and~\ref{thm:hitest} below).
With these in hand, a QIP can be proved from sublinearity on average of the associated corrector alone.

Although we do not address everywhere sublinearity of the corrector in our parameter regime, we suspect that it does hold under Assumption~\ref{ass1} and \ref{Asmp2}. The counterexamples in the nearest-neighbor case are strongly inspired by analogous examples of i.i.d.\ nearest neighbor conductances (Mathieu and Remy~\cite{MR}, Berger, Biskup, Hoffman and Kozma~\cite{BBHK}, Biskup and Boukhadra~\cite{BB}) for which the return probabilities exhibit strongly subdiffusive decay while the path distribution still scales to a non-degenerate Brownian motion. The key mechanism there is trapping.

\subsection{Open problems}
We finish by stating a few open problems that naturally build on the results of the present note.
As a starter, we pose:

\begin{problem}\label{p1}
Under the conditions of Theorem~\ref{thm:1}, prove a local CLT by showing, e.g.,
\begin{equation}
\lim_{n\to\infty}
\sup_{\begin{subarray}{c}
x\in(2\Z)^d\\|x|\le\sqrt n
\end{subarray}}
\Bigl| (2n)^{d/2}\cmss P^{2n}(0,x)-k_1(x/\sqrt n)\Bigr|=0,\quad \BbbP\text{\rm-a.s.},
\end{equation}
where $k_1$ is the probability density of a centered normal with covariance \eqref{E:sigma}.
\end{problem}

\noindent
We believe that this holds under the same conditions as a CLT by analogy with the nearest-neighbor situations in~\cite{ADS,ADS2}. For nearest-neighbor
variable speed random walks in ergodic environments, a local CLT is  
obtained in \cite{ADS2, AT, BS3} 
under some moment condition whereas a QIP
has been established in \cite{ABDH,ADS,BBT,Barlow-Deuschel,BelSch}.
In particular, according to \cite[Proposition 1.5]{DF}, when $d\ge 4$ 
and $q=\infty$, the moment condition \eqref{E:1.7**}
(note that the dimension~$d$ of \cite{DF} corresponds to $d-1$ in the present paper) is 
sufficient and, except for the equality, generally necessary for   a local CLT   to hold for variable speed random walks in ergodic environments.

 Techniques to prove quenched local CLT results are definitely available. Indeed, Andres, Chiarini and Slowik~\cite{ACS} extended the iteration methods underlying~\cite{ADS,ADS2} to non-elliptic situations under a condition slightly weaker than \eqref{E:1.7*}. Very recently, Bella and Sch\"affner~\cite{BS2,BS3} use the regularity theory of weak solutions of elliptic
equations to prove a local CLT for nearest-neighbor
variable speed random walks even under the weaker moment condition \eqref{E:1.7**}. Still, a quenched local CLT in both papers \cite{ADS2,BS3} is based on parabolic Harnack inequalities. The problem is that, for models with arbitrarily large jumps,
elliptic Harnack inequalities may not hold in general even for large balls and parabolic Harnack inequalities may thus not hold either. See \cite[Section 4.2.2]{CKW2} for related discussions on random conductance models with stable-like jumps.

Another extension that we believe should be possible by a reasonably straightforward  adaptation  of the methods of the present work is the content of:

\begin{problem}
Let $d\ge2$ and suppose $\frakp\colon\Z^d\to[0,1]$ obeys \eqref{E:2.7} with $s>2d$ and $\frakp(x):=p$ when~$|x|=1$ for some $p\in[0,1)$. Assume that the random graph with vertex set~$\Z^d$ and an edge between~$x$ and~$y$ present with probability  $\frakp(y-x)$
independently of other edges, contains an infinite connected component $\CC^\infty$ a.s. Prove that the simple random walk on $\CC^\infty$ obeys a QIP.
\end{problem}

\noindent
Here the key challenge is the potential absence (as even $p=0$ is allowed) of nearest-neighbor edges in the computations involving Dirichlet forms in our proofs. Barlow~\cite{Barlow} and the recent work of Flegel, Heida and Slowik~\cite{FHS} provide good possible starting points.

In light of Theorems~\ref{thm-2.6} and~\ref{thm-2.8}, a different strategy than used so far is needed to get a QIP beyond the regime marked by \eqref{E:1.7**} or \eqref{E:1.8*} and, in particular, for long-range percolation graphs with decay exponents $s\in(d+2,2d]$.
Here we propose to start with:

\begin{problem}
Consider the long-range percolation graph with exponents $s\in(d+2,2d]$ and
$\frakp(x)=1$ for~$|x|=1$. Prove a QIP.
\end{problem}

\noindent
The requirement $\frakp(x)=1$ ensures that the underlying graph is connected. A key obstacle is thus the lack of the Sobolev inequalities underlying our proofs. Although the corrector fails to be everywhere sublinear in these cases, this is not an obstacle for our approach, for which sublinearity on average is sufficient.
We find it worthwhile to start by addressing the non-percolating regime, i.e., the situations when, upon removal of the nearest-neighbor edges, the graph does not contain an infinite connected component a.s.

 A considerably more robust way to go beyond the $p,q$-conditions would be to prove corrector sublinearity along typical paths of the Markov chain
\begin{equation}
\label{E:2.15o}
\max_{1\le k\le n}\frac1{\sqrt n}\bigl|\chi(Z_k)\bigr|\,\underset{n\to\infty}{\overset{P^0}\longrightarrow}\,0,\quad\text{$\BbbP$-a.s.}
\end{equation}
 This is, in fact, what underlies the known proofs of the AIP under the optimal conditions \eqref{E:1.3} and even the present paper goes part of the way along this line. Ba and Mathieu~\cite{Ba-Mathieu} have been able to utilize this strategy to prove a QIP for a continuum diffusion in a random environment subject to a periodicity requirement.

 The approach of \cite{Ba-Mathieu} is based on Dirichlet form theory, time change arguments and new weighted Sobolev-type inequalities for integrable potentials. A key benefit of  the periodicity of the environment is that only global weighted Sobolev-type inequalities
and on-diagonal heat-kernel upper bounds are required (while we have to work with scale-dependent Sobolev inequalities and control also off-diagonal heat kernel upper bounds).
A question relevant from the point of view of our counterexamples is whether the corrector in~\cite{Ba-Mathieu} is everywhere sublinear or not --- for if it is, then the periodicity assumption is perhaps too strong to tell us much in our context.

Our last question, which is undoubtedly the one most ambitious, concerns the random walk on one-di\-men\-sional long-range percolation graphs (i.e., the setting of Corollary~\ref{cor-2.3}, (1,2) and \eqref{E:2.7}). Indeed, as noted above, there we get $(s-1)$-stable process convergence when $s\in(1,2)$ and a Brownian limit when~$s>2$.

\begin{problem}
Prove (quenched or annealed) convergence for suitably scaled random walk on one-dimensional long-range percolation graphs for $s=2$.
\end{problem}

\noindent We conjecture that all the $\alpha$-stable limits with $\alpha\in(1,2)$ somehow appear for~$s=2$. If so, we would expect that the index of stability depends on the precise asymptotic of the connection probabilities; i.e., on $\beta:=\lim_{|x|\to\infty}|x|^2\frakp(x)$. Since the $1/r^2$-percolation model  is known to exhibit multiple  phase transitions (cf Aizenman and Newman~\cite{AN}, Imbrie and Newman~\cite{IN}), the dependence of~$\alpha$ on~$\beta$ may even undergo interesting phase transitions as well.

\section{Functional inequalities and heat-kernel estimates}
\label{sec3}\nopagebreak\noindent
We will now move to the exposition of the proofs. In this section, we develop the main technical ingredients underlying the proof of QIP in Theorem~\ref{thm:1}.
We start by introducing continuous-time versions of our discrete-time Markov chains.

\subsection{Continuous time processes}
\label{3-1}\noindent
Recall that $Z:=\{Z_n\colon n\ge0\}$ denotes the discrete-time process on $\Z^d$ with transition probabilities $\cmss P(x,y)$ and associated stationary measure~$\pi$ as defined in~\eqref{E:1.1}. We will consider two continuous-time variants of~$Z$. The first one is the canonical variable-speed  chain $X:=\{X_t: t\ge0\}$ --- the VSRW --- obtained from~$Z$ by taking jumps
at independent exponential times
whose parameter at~$x$ is~$\pi(x)$. The process~$X$ is then a continuous-time Markov chain on~$\Z^d$ with the generator
\begin{equation}
({\cmss L}_X f)(x):=\sum_{y\in \Z^d}C_{x,y}\bigl[f(y)-f(x)\bigr].
\end{equation}
The counting measure $\hm(x):=1$ on~$\Z^d$ is stationary and reversible for~$X$.
Hence, the Dirichlet form $(D,\Fs)$ associated with the process $X$ is given by
\begin{equation}
\begin{aligned}
D(f,f)&:=
\sum_{x,y\in
\Z^d}C_{x,y}\bigl[f(y)-f(x)\bigr]^2  ,\quad
f\in
\Fs,\\
\Fs&:=\bigl\{f \in \ell^2(\hm)\colon D(f,f)<\infty\bigr\}.
\end{aligned}\end{equation}
Here, for any $p\in[1,\infty)$ and any measure~$\lambda$ on~$\Z^d$, let $\ell^p(\lambda)$ denote the space of $p$-integrable functions $f\colon\Z^d\to\R$ and
 denote by $\|f\|_{\ell^p(\lambda)}$ the corresponding $\ell^p$-norm.

Our second, and more important, continuous-time chain $Y:=\{Y_t:t\ge0\}$ will be a time change of the process~$X$ defined as follows:
\begin{equation}
Y_t:=X_{A^{-1}_t},\quad\text{where}\quad A^{-1}_t:=\inf\{s\ge 0\colon A_s>t\}\quad\text{for}\quad A_t:=\int_0^t\hn({X_s})\,\textd s,
\end{equation}
with $\hn(x)$  as in \eqref{E:3.1}.
Then~$Y$ is a continuous-time Markov chain on $\Z^d$ with the generator
\begin{equation}
({\cmss L}_Y f)(x):=\frac1{\hn(x)}\sum_yC_{x,y}\bigl[f(y)-f(x)\bigr]
\end{equation}
and~$Y$ is thus reversible with respect to~$\hn$.
(Alternatively, $Y$ can be defined directly from~$Z$ and independent exponentials that at~$x$ have parameter $\pi(x)/\hn(x)$.)
In particular, the Dirichlet form $(\wt D,\wt\Fs)$ associated with the process $Y$ is given by
\begin{equation}\begin{aligned}
\wt D(f,f)&:=\sum_{x,y\in
\Z^d}\hn(x)\frac{ C_{x,y}}{\nu(x)}\bigl[f(y)-f(x)\bigr]^2=D(f,f) ,\quad
f\in
\wt\Fs,\\
\wt\Fs&:=\bigl\{f \in \ell^2(\hn)\colon \wt D(f,f)<\infty\bigr\}.
\end{aligned}\end{equation}
We will henceforth think of the chains $Z$, $X$ and~$Y$ as defined on the same probability space, and write~$P^x$ for the joint law of their paths where (each) chain is at~$x$ at time zero a.s. 
We will use~$E^x$ to denote expectation with respect to~$P^x$.

The random processes $X$, $Y$ and~$Z$ on~$\Z^d$ naturally induce corresponding random processes on the space of random environments, via the ``point of view of the particle.'' These are stationary and reversible with respect to the measures $\Q_X$, $\Q_Y$ and~$\Q_Z$, respectively, defined by
\begin{equation}
\Q_X(\textd\omega):=\BbbP(\textd\omega),\quad
\Q_Y(\textd\omega):=\frac{\hn(0)}{\E\hn(0)}\BbbP(\textd\omega),\quad
\Q_Z(\textd\omega):=\frac{\pi(0)}{\E\pi(0)}\BbbP(\textd\omega),
\end{equation}
where~$\omega$ denotes a generic element from the sample space carrying the conductance law~$\BbbP$.
Thanks to our assumptions, all three measures are mutually absolutely continuous with respect to~$\BbbP$.  Moreover, according to Assumption \ref{ass1} and the definitions \eqref{E:1.1} and \eqref{E:3.1}, both of the measures $\pi$ and $\hn$ satisfy that $\pi(x)=\pi(0)\circ\tau_x$ and $\hn(x)=\hn(0)\circ\tau_x$ for all $x\in \Z^d$, where  $\{\tau_x\}_{x\in \Z^d}$ are the shifts of~$\Z^d$.  This structure ensures absence of finite-time blow-ups:

\begin{lemma}\label{thm:conserv}
Suppose Assumptions~$\ref{ass1}$ and $\ref{Asmp2}$ hold. Then both $X$ and $Y$ are conservative under~$P^x$, for all~$x\in\Z^d$ and $\BbbP$-a.e.\ sample of the conductances.
\end{lemma}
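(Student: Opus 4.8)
The plan is to prove conservativeness by exhibiting, for each chain, a reversible and ergodic environment process on the space of conductances, and then invoking the standard criterion that a reversible Markov chain with a sigma-finite stationary measure does not explode as soon as its jump rates are integrable against that measure. Concretely: since $Y$ is a time-change of $X$ by the additive functional $A_t=\int_0^t\hn(X_s)\,\textd s$ with $\hn(x)\in(0,\infty)$ everywhere $\BbbP$-a.s., the two chains visit the same sites in the same order and have the same trace; $Y$ is conservative on $[0,\infty)$ if and only if $X$ is, provided $A_t\to\infty$ as $t\to\infty$, which holds because $X$ is irreducible (as $C_{0,x}>0$ for $|x|=1$ $\BbbP$-a.s.\ by Assumption~\ref{Asmp2}) and $\hn$ is bounded below on the sites it visits. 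So it suffices to prove $X$ is conservative.

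First I would record that the environment process seen from the particle of $X$ is stationary and reversible for $\Q_X=\BbbP$, and ergodic by Assumption~\ref{ass1}(1); this is the standard ``point of view of the particle'' construction, already recalled in the excerpt. The total jump rate of $X$ out of $x$ is $\pi(x)=\sum_y C_{x,y}$, and by Assumption~\ref{ass1}(2) together with \eqref{E:1.8*} (which, since $|x|\ge1$ on the support of the off-diagonal conductances, dominates $\sum_{x\ne0}C_{0,x}$) we have $\E\,\pi(0)=\E\,C_{0,0}+\E\sum_{x\ne0}C_{0,x}<\infty$. Thus the mean jump rate of the environment process is finite.

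The core step is then the following deterministic/ergodic argument: let $J_1<J_2<\cdots$ be the successive jump times of $X$, and let $\tau_k:=J_{k+1}-J_k$ be the holding time after the $k$-th jump, so that conditionally on the trajectory of visited environments, $\tau_k$ is exponential with parameter $\pi(Z_k)$ where $Z_k=X_{J_k}$. Explosion of $X$ means $\sum_{k\ge0}\tau_k<\infty$ with positive probability. I would bound $\E^0\bigl[\sum_{k=0}^{n-1}\tau_k\bigr]=\E^0\sum_{k=0}^{n-1}\pi(Z_k)^{-1}$ and invoke the ergodic theorem for the environment process of the discrete-time chain $Z$ (which is reversible for $\Q_Z\propto\pi\,\BbbP$): along $\Q_Z$-a.e.\ environment, $\frac1n\sum_{k=0}^{n-1}\pi(Z_k)^{-1}\to \int \pi^{-1}\,\textd\Q_Z=\frac1{\E\pi(0)}\int \pi\cdot\pi^{-1}\,\textd\BbbP=\frac{1}{\E\pi(0)}$, a strictly positive constant. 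Hence $\sum_k\tau_k=\infty$ $P^0$-a.s.\ for $\BbbP$-a.e.\ environment (the null set being pushed from $\Q_Z$ to $\BbbP$ by mutual absolute continuity), which is exactly conservativeness of $X$ started at $0$; translating by the shift-invariance of $\BbbP$ (or running the same argument from each $x\in\Z^d$, a countable family) gives it for all $x$ simultaneously.

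The main obstacle is the bookkeeping needed to move the a.s.\ statement between the three reference measures and between ``fixed environment'' and ``averaged'' statements: the ergodic theorem gives an almost sure statement under $\Q_Z$ about a quantity that is itself a $P^0$-almost sure object, so one has to be careful to first fix a good environment, then argue pathwise. One clean way to avoid a quantitative estimate entirely is to note $\sum_k \pi(Z_k)^{-1}$ dominates the Cesàro-divergent series just identified, so the pathwise $\limsup$ of the partial sums is $+\infty$ on a full-measure set; no uniform integrability is required. A secondary point to handle with care is the equivalence ``$X$ conservative $\Leftrightarrow$ $Y$ conservative'': one must check $A_t<\infty$ for all finite $t$ (immediate, since $\hn(X_s)$ takes finitely many values on any finite time interval because $X$ makes finitely many jumps there once we know $X$ is non-explosive) and $A_\infty=\infty$ (from recurrence-or-transience plus $\hn\ge$ a positive constant along the visited sites). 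These are routine once conservativeness of $X$ is in hand.
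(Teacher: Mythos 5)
Your core argument for $X$ coincides with the paper's: reduce conservativeness to $\sum_{k\ge0}\pi(Z_k)^{-1}=\infty$ $P^0$-a.s.\ and get this from the ergodic theorem for the $Z$-environment chain under $\Q_Z$, using $\E_{\Q_Z}[\pi(0)^{-1}]=1/\E\pi(0)\in(0,\infty)$ (your derivation of $\E\pi(0)<\infty$ from Assumption~\ref{ass1}(2) and \eqref{E:1.8*} is correct). One point to state carefully: divergence of $\sum_k\E[T_k\mid Z]$ does not by itself give $\sum_k T_k=\infty$; you need the conditional independence of the exponential holding times, which is the first-and-second Borel--Cantelli step the paper invokes. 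Your ``dominates the Ces\`aro-divergent series'' phrasing only re-establishes $\sum_k\pi(Z_k)^{-1}=\infty$ and does not by itself supply that bridge.

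The genuine gap is in your reduction for $Y$. You need $A_\infty=\int_0^\infty\hn(X_s)\,\textd s=\infty$, and the justification you offer --- ``$\hn\ge$ a positive constant along the visited sites'' --- is not available: $\hn(x)\ge\sum_{|y-x|=1}C_{x,y}>0$ at every site, but under Assumption~\ref{Asmp2} there is no uniform lower bound, and in $d\ge3$ the walk will typically be transient, so it visits infinitely many distinct sites on which $\hn$ can be arbitrarily small. The clean repair is to note that, conditionally on $Z$, the time $X$ spends at $Z_k$ is exponential with mean $1/\pi(Z_k)$, so $A_\infty=\sum_k\bigl(\hn(Z_k)/\pi(Z_k)\bigr)\widetilde T_k$ with $\widetilde T_k$ conditionally i.i.d.\ mean-one exponentials; then apply the ergodic theorem for $\Q_Z$ once more with $\E_{\Q_Z}[\hn(0)/\pi(0)]=\E\hn(0)/\E\pi(0)\in(0,\infty)$ (finiteness of $\E\hn(0)$ again from \eqref{E:1.8*}). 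This is exactly the paper's treatment of $Y$, phrased directly in terms of $Y$'s holding times (exponential with parameter $\pi(Z_k)/\hn(Z_k)$). So the time-change detour does not save a step: $Y$ still requires a second application of the ergodic theorem, not a pointwise lower bound on $\hn$.
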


\begin{proof}
We will invoke a standard criterion (see, e.g., Liggett~\cite[Chapter~2]{Liggett}) plus some stationarity and the fact that~$X$ and~$Y$ are derived from the discrete-time Markov chain~$Z$. Focusing on~$X$ first, we have $X_t=Z_{N_t}$ for $N_t:=\sup\{n\ge0\colon T_1+\dots+T_n\le t\}$ where, conditional on~$Z$, the random times~$\{T_k\colon k\ge1\}$ are independent exponentials with~$T_k$ having parameter~$\pi(Z_{k-1})$. Thanks to the 1st and 2nd Borel-Cantelli lemmas,
\begin{equation}
\sum_{k\ge1}T_k=\infty\quad\text{a.s.}\qquad\Leftrightarrow\qquad \sum_{k\ge1} E^x(T_k|Z)=\infty\quad\text{a.s.}
\end{equation}
so no blow-ups occur if and only if the sum on the right diverges a.s.
Now $E^x(T_k|Z)=1/\pi(Z_{k-1})$ and so we need $\sum_{k\ge0}1/\pi(Z_k)=\infty$ a.s.
The
stationarity and ergodicity of~$\Q_Z$ for the process on environments induced by~$Z$ imply
\begin{equation}
\frac1n\sum_{k=0}^{n-1}1/\pi(Z_k)\,\underset{n\to\infty}\longrightarrow\, \E_{\Q_Z}(1/\pi(0))=1/\E\pi(0)\quad\text{\ a.s.}
\end{equation}
The limit is positive since $\E\pi(0)<\infty$ by Assumption \ref{Asmp2}. In particular, we have $\sum_{k\ge0}1/\pi(Z_k)=\infty$ a.s.

 The argument for~$Y$ process is analogous; only that~$T_{k+1}$ is now (conditionally on~$Z$) exponential with parameter~$\pi(Z_k)/\hn(Z_k)$. Here we also need
$0<\E\hn(0)<\infty$
as implied by Assumption \ref{Asmp2}  as well as $\hn(x)=\hn(0)\circ\tau_x$ for all $x\in \Z^d$ as noted above.
\end{proof}

\subsection{Localization and truncation}
\label{3-2}\noindent
Our proof focuses on the process~$Y$. The main challenge is to control the contribution of large jumps. As noted earlier, we do this by way of localization, which is a change of the environment that limits all the complexity to a finite ball, and {truncation}, where we remove jumps larger than a certain cutoff from the environment. We remark that the idea of considering localized modifications of non-local Dirichlet forms has appeared in \cite[Section 2.2]{CKW}, but here the construction is more delicate as we need to modify both the conductances and the reference measure.

We start by localization.
Denote
\begin{equation}
B(x,R):=x+([-R,R]^d\cap\Z^d).
\end{equation}
 For any integer $R\ge1$, let
\begin{equation}
\label{E:3.11eq}
\tC_{x,y}^R:=
\begin{cases}
C_{x,y}, &\text{if}\ x\in B(0,2R)\ \text{or}\ y\in B(0,2R),\\
 1,& \text{if}\  x\notin B(0,2R)\ \text{and}\ y\notin B(0,2R) \text{ and } |x-y|=1,\\
 0,& \text{otherwise}
\end{cases}
\end{equation} and
\begin{equation}
\label{E:3.12eq}
\tn^R(x):=
\begin{cases}
\hn(x),&\text{if}\ x\in B(0, 2R),\\
1+\hn(x),&\text{if}\ x\in B(0, 4R)\smallsetminus B(0,2R),\\
 1,&\text{if}\ x\notin B(0,4R)
\end{cases}
\end{equation}
and define a symmetric regular Dirichlet form $(\wt D^R,\wt\Fs^R)$ by
\begin{equation}\begin{aligned}
\wt D^R(f,f)&:=\sum_{x,y\in
\Z^d}\tC_{x,y}^R\bigl[f(y)-f(x)\bigr]^2,\quad
f\in
\wt\Fs^R,\\
\wt\Fs^R&:=\bigl\{f \in \ell^2(\tn^R): \wt D^R(f,f)<\infty\bigr\}.
\end{aligned}
\end{equation}
This form corresponds to the localized version of our process.

Next we move to truncation. Here we first show:

\begin{lemma}
For all $\kappa\in (0,1]$ and all $R\ge1$,
\begin{equation}\label{e:bound0}
\sup_{x\in \Z^d}\frac1{\tn^R(x)}\sum_{\begin{subarray}{c}y\in\Z^d\\|x-y|\le\kappa R\end{subarray}}\tC_{x,y}^R|x-y|^2
\le 1+2d
\end{equation}
and
\begin{equation}\label{e:bound}
\sup_{x\in  B(0,4R)}\frac{1}{\tn^R(x)} \sum_{\begin{subarray}{c}
y\in \Z^d\\|y-x|>\kappa R
\end{subarray}}
\tC_{x,y}^R
\le \frac{1+2d}{\kappa^2 R^2}.
\end{equation}
\end{lemma}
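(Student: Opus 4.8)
The plan is to deduce both inequalities from a single pointwise estimate on the full weighted sum $S^R(x):=\sum_{y\in\Z^d}\tC^R_{x,y}|x-y|^2$, and to prove that estimate by splitting on whether $x$ lies in $B(0,2R)$, in $B(0,4R)\setminus B(0,2R)$, or outside $B(0,4R)$ --- the three regimes on which \eqref{E:3.11eq} and \eqref{E:3.12eq} take different forms. First note that the quotients in \eqref{e:bound0}--\eqref{e:bound} are well defined: $\tn^R(x)\ge1$ for $x\notin B(0,2R)$, while for $x\in B(0,2R)$ one has $\tn^R(x)=\hn(x)$, and if this vanishes then every $\tC^R_{x,\cdot}$ vanishes too, so both displays hold trivially there.

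The key input is the elementary fact that, by \eqref{E:3.11eq}, $\tC^R_{x,y}=C_{x,y}$ for \emph{every}~$y$ when $x\in B(0,2R)$, whereas $\tC^R_{x,y}\le C_{x,y}+\IA_{\{|x-y|=1\}}$ for every~$y$ when $x\notin B(0,2R)$ (a short check of the three cases). Multiplying by $|x-y|^2$, summing over~$y$, and using $\sum_{|x-y|=1}|x-y|^2=2d$ together with the definition \eqref{E:3.1} of~$\hn$, we get $S^R(x)=\hn(x)$ for $x\in B(0,2R)$ and $S^R(x)\le\hn(x)+2d$ for $x\notin B(0,2R)$. Comparing with \eqref{E:3.12eq} this yields the key bound
\[
S^R(x)\le(1+2d)\,\tn^R(x),\qquad x\in B(0,4R),
\]
since $S^R(x)=\hn(x)=\tn^R(x)$ on $B(0,2R)$ and $\hn(x)+2d\le(1+2d)\bigl(1+\hn(x)\bigr)=(1+2d)\,\tn^R(x)$ on $B(0,4R)\setminus B(0,2R)$.

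With this in hand, \eqref{e:bound} is immediate: for $x\in B(0,4R)$ and $|y-x|>\kappa R$ one has $1\le|x-y|^2/(\kappa R)^2$, hence
\[
\sum_{\substack{y\in\Z^d\\|y-x|>\kappa R}}\tC^R_{x,y}\le\frac1{\kappa^2R^2}\,S^R(x)\le\frac{1+2d}{\kappa^2R^2}\,\tn^R(x).
\]
For \eqref{e:bound0}, the cases $x\in B(0,2R)$ and $x\in B(0,4R)\setminus B(0,2R)$ follow at once from $\sum_{|x-y|\le\kappa R}\tC^R_{x,y}|x-y|^2\le S^R(x)\le(1+2d)\,\tn^R(x)$ (the ratio is in fact $\le1$ in the first case). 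The remaining case $x\notin B(0,4R)$, where $\tn^R(x)=1$ and the bound $S^R(x)\le\hn(x)+2d$ is useless because $\hn(x)$ is unconstrained, is where the truncation is used: if $y\in B(0,2R)$ then, since $\|x\|_\infty>4R$ and $\|y\|_\infty\le2R$, some coordinate $i$ of $x-y$ satisfies $|x_i-y_i|>2R$, so $|x-y|\ge|x_i-y_i|>2R\ge\kappa R$ and such~$y$ do not enter the truncated sum; what remains are only the bonds with $\tC^R_{x,y}=1$, i.e.\ at most the $2d$ nearest-neighbor bonds at~$x$, each contributing $|x-y|^2=1$, so the truncated sum is $\le2d\le(1+2d)\,\tn^R(x)$.

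I do not expect a genuine obstacle; the one point requiring care is precisely this last case of \eqref{e:bound0}, where the inequality genuinely fails for the untruncated sum and its proof rests on the geometric separation of $\Z^d\setminus B(0,4R)$ from $B(0,2R)$, using also $\kappa\le1$. This is exactly why \eqref{e:bound0} is stated with the cutoff $\kappa R$ and \eqref{e:bound} only for $x\in B(0,4R)$.
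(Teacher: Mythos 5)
Your proof is correct and follows essentially the same route as the paper: the same trichotomy $B(0,2R)$ / $B(0,4R)\setminus B(0,2R)$ / $B(0,4R)^\cc$, the same key bound $\sum_y \tC^R_{x,y}|x-y|^2\le(1+2d)\tn^R(x)$ on $B(0,4R)$ (the paper computes it directly rather than naming it $S^R$), the same Chebyshev-type step $1\le|x-y|^2/(\kappa R)^2$ for the tail sum, and the same geometric observation that a point outside $B(0,4R)$ is at Euclidean (indeed $\ell^\infty$) distance $>2R\ge\kappa R$ from $B(0,2R)$, so only the nearest-neighbor bonds survive truncation there. You spell out the last case in slightly more detail than the paper, which states that bound without comment; otherwise the two arguments coincide.
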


\begin{proofsect}{Proof}
By \eqref{E:2.1}, \eqref{E:3.11eq} and \eqref{E:3.12eq},
for all $R\ge1$  and all $x\in B(0, 2R)$,
\begin{equation}
\frac{1}{\tn^R(x)}\sum_{y\in\Z^d}\tC_{x,y}^R|x-y|^2= \frac{1}{\hn(x)}
\sum_{y\in\Z^d}C_{x,y}|x-y|^2= 1,
\end{equation}
while for $x\in B(0, 4R)\smallsetminus B(0,2R)$ we get
\begin{equation}\label{e:3.16}
\frac{1}{\tn^R(x)}\sum_{y\in\Z^d}\tC_{x,y}^R|x-y|^2 = \frac{1}{1+\hn(x)}
\Bigl(\sum_{y\in B(0,2R)}C_{x,y}|x-y|^2 + \sum_{\begin{subarray}{c}y\in B(0,2R)^\cc\\|x-y|=1\end{subarray}}1\Bigr)\le 1+2d.
\end{equation}
Hence,
\begin{equation}\label{e:3.17}
\sup_{R\ge1}\sup_{x\in B(0,4R)}\frac{1}{\tn^R(x)}\sum_{y\in\Z^d}\tC_{x,y}^R|x-y|^2\le1+2d.
\end{equation}
In particular, for all $\kappa\in (0,1]$ and all~$R\ge1$,
\begin{equation}
\label{e:bound0-}
\sup_{x\in  B(0,4R)}\frac{1}{\tn^R(x)}\sum_{\begin{subarray}{c}y\in\Z^d\\|x-y|\le\kappa R\end{subarray}}\tC_{x,y}^R|x-y|^2\le \sup_{x\in B(0,4R)}\frac{1}{\tn^R(x)}\sum_{y\in\Z^d}\tC_{x,y}^R|x-y|^2\le 1+ 2d.
\end{equation} On the other hand, \eqref{E:3.11eq} and \eqref{E:3.12eq} also give us that
for all $R\ge1$ and all $\kappa\in (0,1]$,
\begin{equation}
\label{e:bound0--}
\sup_{x\in B(0,4R)^c}\frac{1}{\tn^R(x)}\sum_{\begin{subarray}{c}y\in\Z^d\\|x-y|\le\kappa R\end{subarray}}
C_{x,y}^R|x-y|^2\le \sup_{x\in B(0,4R)^c}\sum_{\begin{subarray}{c}y\in\Z^d\\|x-y|=1\end{subarray}}1=2d.
\end{equation}
Combining \twoeqref{e:bound0-}{e:bound0--}, we have \eqref{e:bound0}. Noting that, in light of
{\eqref{e:3.17}}, the sum in \eqref{e:bound} is bounded~by
\begin{equation}
\frac{1}{\kappa^2 R^2}\sup_{x\in B(0,4R)}\frac{1}{\tn^R(x)} \sum_{y\in \Z^d}\tC_{x,y}^R |x-y|^2
\le \frac{1+2d}{\kappa^2 R^2},
\end{equation}
the claim follows.
\end{proofsect}

For all $\kappa\in (0,1]$ and all~$R\ge1$ satisfying $\kappa R\ge1$ we now define a
truncated, localized Dirichlet form $(\wt D^{R,\kappa}, \wt\Fs^R)$ by
\begin{equation}
\wt D^{R,\kappa}(f,f):=\sum_{\begin{subarray}{c}
x,y\in \Z^d\\ |x-y|\le\kappa R\end{subarray}}
\tC_{x,y}^R\bigl[f(y)-f(x)\bigr]^2,\quad
f\in \wt\Fs^R,
\end{equation}
which is well defined by \eqref{e:bound0}.
A starting point of our derivations is the following Sobolev inequality for $(\wt D^{R,\kappa}, \wt\Fs^R)$:

\begin{proposition}\label{P:Sob}
Let~$d\ge2$ and suppose  Assumptions~$\ref{ass1}$ and $\ref{Asmp2}$ hold.  There are $\epsilon\in (0,\frac4{d-2})$, a constant $c_1\in(0,\infty)$ and an a.s.-finite random variable $R_0:=R_0(\w)\ge1$ such that
\begin{equation}\label{e:Sob}
\|f\|^2_{\ell^{2+\epsilon}(\tn^R)}\le c_1 \left(R^{2-\frac{d\epsilon}{2+\epsilon}}\wt D^{R,\kappa}(f,f)+R^{-\frac{d\epsilon}{2+\epsilon}}\|f\|^2_{\ell^2(\tn^R)}\right)
\end{equation}
holds for all $\kappa\in (0,1]$, all
 $f\in \ell^2(\hn^R)$  and all  $R\ge R_0$ with~$\kappa R\ge1$.
\end{proposition}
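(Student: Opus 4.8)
The plan is to derive \eqref{e:Sob} from the classical Sobolev inequality on $\Z^d$,
\[
\|g\|_{\ell^{2^\ast}(\Z^d)}^2\le C_{\mathrm{S}}\sum_{|x-y|=1}\bigl[g(y)-g(x)\bigr]^2,\qquad 2^\ast:=\tfrac{2d}{d-2},
\]
(for finitely supported $g$, $d\ge3$), paying for the non-uniform ellipticity of $\tC^R$ and for the irregularity of the weight $\tn^R$ by Hölder's inequality --- fed, on individual weights, from the moments \eqref{E:1.8*}--\eqref{E:2.1} and, on large scales, from the ergodic theorem. Only the sub-critical exponent $2+\epsilon<2^\ast$ is needed, which is where $\epsilon<\tfrac4{d-2}$ enters and leaves room to absorb the weight and the degeneracy; the admissible gap is tied to the slack in \eqref{E:1.7*}, a natural choice (for $d\ge3$) being $\epsilon:=\tfrac{2d}{d-2}\bigl(\tfrac2d-\tfrac1p-\tfrac1q\bigr)\in(0,\tfrac4{d-2})$, while for $d=2$ any $\epsilon>0$ works (and any smaller positive value works in all cases). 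Two reductions simplify matters. First, since $\kappa R\ge1$ forces $\wt D^{R,\kappa}(f,f)\ge\wt D^R_{\mathrm{nn}}(f,f):=\sum_{|x-y|=1}\tC^R_{x,y}[f(y)-f(x)]^2$ (all summands non-negative, nearest-neighbour conductances unchanged), it suffices to prove \eqref{e:Sob} with $\wt D^{R,\kappa}$ replaced by the $\kappa$-free form $\wt D^R_{\mathrm{nn}}$, which makes $c_1$ automatically uniform in $\kappa$. Second, we may assume $f\in\ell^2(\tn^R)$, since otherwise the right-hand side of \eqref{e:Sob} is infinite (whence $\|f\|_{\ell^{2+\epsilon}(\tn^R)}<\infty$ too, as $\tn^R$ is the counting measure off the finite set $B(0,4R)$).

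Next I would localize to $B:=B(0,5R)$. Fix $\eta\colon\Z^d\to[0,1]$ with $\eta\equiv1$ on $B(0,4R)$, $\supp\eta\subseteq B$ and $|\eta(x)-\eta(y)|\le c/R$ for $|x-y|=1$, and split $f=\eta f+(1-\eta)f$. The tail $(1-\eta)f$ lives in $B(0,4R)^{\mathrm c}$, where $\tn^R\equiv1$ and $\tC^R$ is the unit nearest-neighbour kernel; there \eqref{e:Sob} follows from the classical inequality together with the interpolation $\|g\|_{\ell^{2+\epsilon}}\le\|g\|_{\ell^2}^{1-\lambda}\|g\|_{\ell^{2^\ast}}^{\lambda}$, $\lambda=\tfrac{d\epsilon}{2(2+\epsilon)}$, and Young's inequality with a split parameter tuned so that the two resulting powers of $R$ are exactly $2-\tfrac{d\epsilon}{2+\epsilon}$ and $-\tfrac{d\epsilon}{2+\epsilon}$ --- possible because $2\lambda=\tfrac{d\epsilon}{2+\epsilon}$ --- using along the way that the Leibniz rule (and \eqref{e:bound0}) bounds $\sum_{|x-y|=1}|\nabla((1-\eta)f)|^2$ by $2\wt D^R_{\mathrm{nn}}(f,f)+CR^{-2}\|f\|_{\ell^2(\tn^R)}^2$. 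The case $d=2$ is identical once $g$ is first extended by a cut-off to a slab of thickness $\sim R$ in $\Z^3$, which only adds a further lower-order $R^{-d\epsilon/(2+\epsilon)}\|f\|^2$ term.

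It remains to estimate $u:=\eta f$, supported in $B$, with the genuinely random data $\tn^R$, $\tC^R$. Here I would run a degenerate Sobolev inequality in the spirit of \cite{ADS}: from $\|u\|_{\ell^{2^\ast}}^2\le C_{\mathrm{S}}\sum_{|x-y|=1}[u(y)-u(x)]^2$, insert $\tn^R$ on the left by Hölder in the vertex variable against $\sum_{x\in B}\tn^R(x)^p$, and insert $\tC^R$ on the right by writing $\sum_{x\sim y}[u(y)-u(x)]^2=\sum_{x\sim y}(\tC^R_{x,y})^{-1}\,\tC^R_{x,y}[u(y)-u(x)]^2$ and applying Hölder with exponent $q$ in the edge variable, using crucially that for the counting measure on edges $\|\cdot\|_{\ell^{q/(q-1)}}\le\|\cdot\|_{\ell^1}$, so that the remaining factor is at most $\wt D^R_{\mathrm{nn}}(u,u)$. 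Reassembling (interpolating between $\ell^2$ and $\ell^{2^\ast}$ to leave room for the $q$-Hölder, then optimizing) produces a bound of the form
\[
\|u\|_{\ell^{2+\epsilon}(\tn^R)}^2\le C\Bigl(\tfrac1{|B|}\sum_{x\in B}\tn^R(x)^p\Bigr)^{\alpha_1}\Bigl(\tfrac1{|B|}\sum_{\substack{|x-y|=1\\x\in B}}(\tC^R_{x,y})^{-q}\Bigr)^{\alpha_2}\Bigl(R^{2-\frac{d\epsilon}{2+\epsilon}}\wt D^R_{\mathrm{nn}}(u,u)+R^{-\frac{d\epsilon}{2+\epsilon}}\|u\|_{\ell^2(\tn^R)}^2\Bigr)
\]
with $\alpha_1,\alpha_2>0$, all powers of $R$ being forced by the choice of $\epsilon$ relative to $p,q$. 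On $B$ one has $\tn^R(x)\le1+\hn(x)$ with $\hn(\cdot)^p\in L^1(\BbbP)$ by \eqref{E:1.8*}, while $(\tC^R_{x,y})^{-q}\le C_{x,y}^{-q}$ for $|x-y|=1$ with $C_{0,x}^{-q}\in L^1(\BbbP)$ for $|x|=1$ by \eqref{E:2.1}; hence by the $\Z^d$-ergodic theorem there is an a.s.-finite $R_0=R_0(\omega)\ge1$ such that for all $R\ge R_0$ the two empirical averages above are at most twice their $\BbbP$-means. This turns the prefactors into a deterministic constant, giving \eqref{e:Sob} for $u$; combined with the tail estimate, the cut-off cross-terms, and $\wt D^R_{\mathrm{nn}}\le\wt D^{R,\kappa}$, this yields \eqref{e:Sob}. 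The real work --- and the reason $\hn$, not $\pi$, is the natural reference measure --- is the bookkeeping that simultaneously (i) yields exactly the powers of $R$ in \eqref{e:Sob}, (ii) keeps the random weights in the form $\tn^R(\cdot)^p$ and $C^{-q}$ so that Assumption~\ref{Asmp2} applies verbatim, and (iii) arranges the cut-off so that every vertex and edge carrying genuinely random data lies in $B(0,4R)$ --- where $\tn^R\le1+\hn$ and the nearest-neighbour conductances are the original $C_{x,y}$ --- leaving the shell $B(0,4R)\smallsetminus B(0,2R)$ and all cut-off cross-terms to contribute only uniformly-elliptic, lower-order pieces.
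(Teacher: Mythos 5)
Your overall architecture matches the paper's: localize via a cut-off $\eta$, treat the tail (in $B(0,4R)^\cc$, where $\tn^R\equiv1$ and the conductances are unit nearest-neighbor) by a Nash-type interpolation, treat the body by a weighted Sobolev inequality with vertex-Hölder against $\sum\tn^R(x)^p$ and edge-Hölder against $\sum C_{x,y}^{-q}$, then kill the random prefactors by the spatial ergodic theorem and reassemble; the use of $\wt D^R_{\mathrm{nn}}\le\wt D^{R,\kappa}$ for $\kappa R\ge1$ to get $\kappa$-uniformity is also exactly the paper's device. Two points, however, diverge from the paper and the second one is where your sketch genuinely breaks.

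First, a mild point: the paper's Lemmas~\ref{lemma0-1Mar} and~\ref{lemma0-2Kum} start from the $\ell^1$-Sobolev (isoperimetric) inequality, which treats $d\ge2$ on the same footing. Your $\ell^{2^*}$-based route requires $d\ge3$ and an ad hoc slab embedding for $d=2$; this can presumably be made to work for the outer piece (uniform conductances) but it is a detour the paper does not need. The paper even remarks that the $\ell^2$-Sobolev route is a simplification available only for $d\ge3$.

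Second, and more seriously: the edge-Hölder step in the body estimate as you state it --- ``applying Hölder with exponent $q$ in the edge variable, using that $\|\cdot\|_{\ell^{q/(q-1)}}\le\|\cdot\|_{\ell^1}$ so that the remaining factor is at most $\wt D^R_{\mathrm{nn}}(u,u)$'' --- loses a factor of $R^{d/q}$ that cannot be recovered, and the powers of $R$ in \eqref{e:Sob} do not close. Concretely, this step gives $\sum_{e}|\nabla u|^2\le\bigl(\sum_e(\tC^R_e)^{-q}\bigr)^{1/q}\wt D^R_{\mathrm{nn}}(u,u)$, hence $\|u\|^2_{\ell^{2^*}}\lesssim R^{d/q}\wt D^R_{\mathrm{nn}}(u,u)$ after the ergodic theorem. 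Writing the vertex Hölder in the three-factor form $\|u\|^{2+\epsilon}_{\ell^{2+\epsilon}(\tn^R)}\le\|u\|^{2A}_{\ell^2(\tn^R)}\|u\|^{2^*B}_{\ell^{2^*}}\bigl(\sum_x\tn^R(x)^p\bigr)^C$ with $A+B+C=1$, $2A+2^*B=2+\epsilon$, $A+pC=1$, and setting $\lambda:=2^*B/(2+\epsilon)$, one needs the ergodic power $\lambda\frac dq+\frac{2dC}{2+\epsilon}$ to be at most the Young-interpolated target $-\frac{d\epsilon}{2+\epsilon}(1-\lambda)+(2-\frac{d\epsilon}{2+\epsilon})\lambda$; eliminating $A,B,C,\lambda$ in favor of $\epsilon$, this constraint reduces (independently of $\epsilon$) to $2^*\bigl(d-2+\frac dq\bigr)\le2d$, i.e.\ $d/q\le0$. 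So your scheme closes only when $q=\infty$ (no lower-bound degeneracy), which is precisely the case when the edge Hölder is trivial. The paper's Lemma~\ref{lemma0-1Mar} avoids this loss by a structurally different step: apply the $\ell^1$-Sobolev to $f^r$, use the product rule to get the sum $\sum_e f^{r-1}|\nabla f|$ (\emph{linear} in the increment), and then a three-term Hölder splitting this as $f^{r d\theta/(d-1)}\cdot C^{-(1/2-\theta)/(1-2\theta)}\cdot(C|\nabla f|^2)^{1/2}$ with $\theta=\frac12(1-\frac1q)$ --- see \eqref{E:3.26eq}. The half-power $D_1(f,f)^{1/2}$ together with $\theta+(\tfrac12-\theta)+\tfrac12=1$ is exactly the slack that lets the edge-moment exponent come out as $\frac1{2q}$ (hence $\frac1q$ after squaring) without the spurious $R^{d/q}$. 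This linear-in-$\nabla$ / half-power structure is absent in the $\ell^2$-Sobolev formulation, which is why the direct $q$-Hölder you propose cannot balance.

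Incidentally, the paper's body estimate (Lemma~\ref{lemma0-1Mar}) has \emph{no} $\ell^2$ term at all; the zeroth-order term in \eqref{e:Sob} arises only from the cut-off cross-terms and the outer region. Your interpolation between $\ell^2(\tn^R)$ and $\ell^{2^*}$ in the body is not wrong per se (you correctly aim at $\ell^2(\tn^R)$ rather than unweighted $\ell^2$, which would not be controllable since $\tn^R$ has no positive lower bound inside $B(0,2R)$), but the extra $\ell^2(\tn^R)$ term is not what prevents the argument from closing --- the exponent mismatch in the edge Hölder is.
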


The proof is based on two lemmas. Consider the Dirichlet form
\begin{equation}
D_1(f,f):=\sum_{\begin{subarray}{c}x,y\in\Z^d\\ |x-y|=1\end{subarray}}{\bar C_{x,y}}
\bigl[f(x)-f(y)\bigr]^2
\end{equation}
associated with an auxiliary collection $\{\bar C_{x,y}=\bar C_{y,x}\colon|x-y|=1\}$ of nearest-neighbor conductances.
We then have:

\newcommand{\bnu}{\bar\nu}

\begin{lemma}
\label{lemma0-1Mar}
For all~$d\ge2$, there is~$c(d)\in(0,\infty)$ and, for all~$p,q\in(\frac d2,\infty)$ satisfying \eqref{E:1.7*} there is $\epsilon\in(0,\frac4{d-2})$ with
\begin{equation}
\label{E:3.20eq}
\frac1q+\frac2{2+\epsilon}\frac1p=\frac2d-\frac{\epsilon}{2+\epsilon}
\end{equation}
such that for all~$L\ge1$, all
$f\colon\Z^d
\to[0,\infty)$ with $\supp(f)\subseteq B(0,L)$, all $\bnu\colon\Z^d\to[0,\infty)$ and all positive $\{\bar C_{x,y}=\bar C_{y,x}\colon|x-y|=1\}$,
\begin{equation}\label{E:3.23eq-}
\Bigl(\,\sum_{x\in \Z^d}f(x)^{2+\epsilon}\,\bnu(x)\Bigr)^{\frac2{2+\epsilon}}
\le c(d)
\Bigl(\frac{(2+\epsilon)p}{p-1}\Bigr)^2\,\alpha_L^{\frac2{p(2+\epsilon)}}\, \beta_L^{\frac{1}{q}} \,\, L^{\,2-\frac{d\epsilon}{2+\epsilon}}\,
D_1(f,f)
\end{equation}
holds with
\begin{equation}
\alpha_L:=\frac1{L^d}\sum_{x\in B(0,L)}\bnu(x)^p\quad\text{\rm and}\quad
\beta_L:=\frac1{L^d}\sum_{\begin{subarray}{c}
x\in B(0,L)\\y\colon |y-x|=1
\end{subarray}}(\bar C_{x,y})^{-q}.
\end{equation}
\end{lemma}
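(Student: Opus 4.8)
\medskip\noindent\emph{Proof plan for Lemma~\ref{lemma0-1Mar}.}
The plan is to deduce \eqref{E:3.23eq-} from the classical unweighted $\ell^1$-gradient Sobolev inequality on~$\Z^d$,
\[
\Bigl(\sum_{x\in\Z^d}g(x)^{\frac d{d-1}}\Bigr)^{\frac{d-1}d}\le C_d\sum_{\substack{x,y\in\Z^d\\ |x-y|=1}}\bigl|g(x)-g(y)\bigr|,
\]
valid for every $d\ge2$ and every finitely supported $g\colon\Z^d\to[0,\infty)$ with $C_d$ depending only on~$d$, by carrying out three reductions: one Hölder step that pulls out the weight~$\bnu$ and produces the factor~$\alpha_L$, one Cauchy--Schwarz step that produces $D_1(f,f)^{1/2}$, and one further Hölder step that produces~$\beta_L$. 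The parameter~$\epsilon$ must be chosen so that all the intermediate $\ell^s$-norms of~$f$ cancel. Write $\|g\|_s:=(\sum_x g(x)^s)^{1/s}$, $p':=p/(p-1)$, $q':=q/(q-1)$ (both finite since $p,q>1$).

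I would first fix~$\epsilon$. The function $\Phi(\epsilon):=\frac1q+\frac2{2+\epsilon}\frac1p-\frac2d+\frac{\epsilon}{2+\epsilon}$ is continuous and strictly increasing on $[0,\infty)$ (its derivative is $\frac2{(2+\epsilon)^2}(1-\frac1p)>0$), satisfies $\Phi(0)=\frac1p+\frac1q-\frac2d<0$ by \eqref{E:1.7*}, and satisfies $\Phi(\tfrac4{d-2})=\frac1q+\frac{d-2}d\frac1p>0$ when $d\ge3$ (because $2+\tfrac4{d-2}=\tfrac{2d}{d-2}$ makes the last two terms cancel) while $\Phi(\epsilon)\to\frac1q>0$ as $\epsilon\to\infty$ when $d=2$. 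Hence there is a unique $\epsilon\in(0,\frac4{d-2})$ with $\Phi(\epsilon)=0$, i.e.\ with \eqref{E:3.20eq}. Set $s:=(2+\epsilon)p'$ and $\gamma:=s(d-1)/d$; then $\gamma\cdot\frac d{d-1}=s$ by construction, and a short manipulation shows that \eqref{E:3.20eq} is exactly the second matching identity $2(\gamma-1)q'=s$ (equivalently $1/s=\frac12-\frac1d+\frac1{2q}$). Since $q>d/2$ this gives $s>2$, hence $\gamma>\frac{2(d-1)}d\ge1$.

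Now the estimate itself. Since $\supp f\subseteq B(0,L)$, Hölder with exponents $p,p'$ gives $\sum_x f(x)^{2+\epsilon}\bnu(x)\le(L^d\alpha_L)^{1/p}\|f\|_s^{2+\epsilon}$, so $\bigl(\sum_x f(x)^{2+\epsilon}\bnu(x)\bigr)^{2/(2+\epsilon)}\le(L^d\alpha_L)^{2/((2+\epsilon)p)}\|f\|_s^2$. On the other hand, apply the Sobolev inequality to $g=f^\gamma$; its left side equals $\|f\|_s^\gamma$ because $\gamma\frac d{d-1}=s$. For the right side, use $|a^\gamma-b^\gamma|\le\gamma(a\vee b)^{\gamma-1}|a-b|$ for $a,b\ge0$, then Cauchy--Schwarz against the weights $\bar C_{x,y}$, then $(a\vee b)^{2(\gamma-1)}\le a^{2(\gamma-1)}+b^{2(\gamma-1)}$ together with $\bar C_{x,y}=\bar C_{y,x}$ and the power-mean bound $\bigl(\sum_{y\colon|y-x|=1}\bar C_{x,y}^{-1}\bigr)^q\le(2d)^{q-1}\sum_{y\colon|y-x|=1}\bar C_{x,y}^{-q}$, and finally Hölder with exponents $q,q'$; since $2(\gamma-1)q'=s$ the residual $f$-norm is $\|f\|_s^{\gamma-1}$, yielding
\[
\|f\|_s^\gamma\le C_d\,\gamma\,\bigl(2(2d)^{(q-1)/q}\bigr)^{1/2}\,(L^d\beta_L)^{\frac1{2q}}\,D_1(f,f)^{1/2}\,\|f\|_s^{\gamma-1}.
\]
Dividing by $\|f\|_s^{\gamma-1}$ (the case $f\equiv0$ being trivial) and squaring gives $\|f\|_s^2\le 2C_d^2(2d)^{(q-1)/q}\gamma^2(L^d\beta_L)^{1/q}D_1(f,f)$. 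Inserting this into the first bound, using $(2d)^{(q-1)/q}\le2d$ and $\gamma^2=(2+\epsilon)^2(p')^2\bigl(\tfrac{d-1}d\bigr)^2\le\bigl(\tfrac{(2+\epsilon)p}{p-1}\bigr)^2$, the power of~$L$ emerges as $\frac{2d}{(2+\epsilon)p}+\frac dq=d\bigl(\frac2d-\frac\epsilon{2+\epsilon}\bigr)=2-\frac{d\epsilon}{2+\epsilon}$ by \eqref{E:3.20eq}, which is precisely \eqref{E:3.23eq-} with $c(d):=4d\,C_d^2$.

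\emph{Main obstacle.} The only genuine content is the calibration of~$\epsilon$: one needs a single exponent~$s$ to simultaneously play the role of the Sobolev target exponent $\gamma\frac d{d-1}$, of the residual exponent $(2+\epsilon)p'$ after Hölder against~$\bnu$, and of the residual exponent $2(\gamma-1)q'$ after Cauchy--Schwarz and Hölder against $\bar C^{-1}$; demanding all three at once is exactly relation \eqref{E:3.20eq}, and the soft monotonicity argument is what places the resulting~$\epsilon$ in the required interval $(0,\frac4{d-2})$ --- this being precisely the point where $\frac1p+\frac1q<\frac2d$ is used. A minor secondary point is keeping the final constant dimension-only: the only potentially $q$-dependent factor is $(2d)^{(q-1)/q}\le2d$, and all the remaining $p,q,\epsilon$-dependence is carried by the explicit prefactor $\bigl(\frac{(2+\epsilon)p}{p-1}\bigr)^2$, which is $\gamma^2$ up to the harmless factor $\bigl(\tfrac{d-1}d\bigr)^2\le1$.
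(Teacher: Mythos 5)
Your proof is correct and is essentially the same argument as the paper's: Hölder against $\bnu$ with exponents $p,p'$, the $\ell^1$-Sobolev inequality applied to $f^\gamma$, and a Hölder/Cauchy--Schwarz step against $\bar C^{-1}$, with the exponents calibrated by \eqref{E:3.20eq} so that the residual $\ell^s$-norm of $f$ cancels. The only cosmetic departures are that you establish the existence of $\epsilon$ by a monotonicity argument rather than the paper's closed-form solution $\epsilon=2(\frac2d-\frac1p-\frac1q)(\frac{d-2}d+\frac1q)^{-1}$, and that you factor the paper's single three-way Hölder (with exponents $1/\theta,\ 1/(\frac12-\theta),\ 2$ where $\theta=\frac{q-1}{2q}$) into Cauchy--Schwarz followed by the power-mean bound and a two-way Hölder with exponents $q,q'$; both routes yield the same exponent balance $2(\gamma-1)q'=s=\gamma\frac d{d-1}=(2+\epsilon)p'$ and a dimension-only constant.
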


\begin{proofsect}{Proof}
Let~$p,q\in(\frac d2,\infty)$ obey \eqref{E:1.7*}. Then \eqref{E:3.20eq} is solved for~$\epsilon$ by
\begin{equation}\label{r:remark}
\epsilon := 2\,\Bigl(\frac2d-\frac1p-\frac1q\Bigr)\Bigl(\frac{d-2}d+\frac1q\Bigr)^{-1}\in\bigl(0,\tfrac4{d-2}\bigr).
\end{equation}
Let~$s>1$ be the index H\"older conjugate to~$p$.
Then by our restriction on the support of~$f$,
\begin{equation}
\label{E:1.1eq}
\sum_{x\in \Z^d}f(x)^{2+\epsilon}\,\bnu(x)
\le\Bigl(\,\sum_{x\in\Z^d}f(x)^{s(2+\epsilon)}\Bigr)^{1/s}
\alpha_L^{1/p}\, L^{d/p}.
\end{equation}
Next define~$r$ by
\begin{equation}
\label{E:1.3eq}
r\frac d{d-1}=s(2+\epsilon)
\end{equation}
and note that, since~$s>1$ and~$\epsilon>0$, we have~$r>1$.
Using that $|a^\gamma-b^\gamma|\le \gamma|a-b|(a^{\gamma-1}+b^{\gamma-1})$ holds for all~$a,b>0$ and all~$\gamma\ge1$,  the $\ell^1$-Sobolev inequality implies
\begin{equation}
\begin{aligned}
\label{E:1.4eq}
\Bigl(\,\sum_{x\in\Z^d}f(x)^{r\frac d{d-1}}\Bigr)^{\frac{d-1}d}
&\le c\sum_{|x-y|=1}\bigl| f(x)^r-f(y)^r\bigr|
\le 2c\,r\sum_{|x-y|=1}f(x)^{r-1}\bigl|f(x)-f(y)\bigr|,
\end{aligned}
\end{equation}
where $c$ is a $d$-dependent constant
(which is directly related to the isoperimetric constant on~$\Z^d$).

Define~$\theta\in(0,1/2)$ by $\theta:=\frac12(1-\frac1q)$ and note that, by \eqref{E:3.20eq},
\begin{equation}
\label{E:1.6eq}
r-1 = \theta r\frac d{d-1}.
\end{equation}
 The H\"older's inequality and the restriction on the support of~$f$ then give
\begin{equation}
\begin{aligned}
\label{E:3.26eq}
\sum_{|x-y|=1}f(x)^{r-1}\bigl|f(x)-f(y)\bigr|
&=\sum_{|x-y|=1}f(x)^{r\frac d{d-1}\theta}\,\bigl({\bar C_{x,y}}^{-\frac1{1-2\theta}}\bigr)^{\frac12-\theta}\,\Bigl(\bar C_{x,y} \bigl|f(x)-f(y)\bigr|^2\Bigr)^{\frac12}
\\&\le\Bigl(2d\sum_{x\in\Z^d}f(x)^{r\frac d{d-1}}\Bigr)^\theta
 \biggl(\sum_{\begin{subarray}{c}
x\in B(0,L)\\y\colon |y-x|=1
\end{subarray}}{\bar C_{x,y}}^{-\frac1{1-2\theta}}
\biggr)^{\frac12-\theta}
D_1(f,f)^{\frac 12}.
\end{aligned}
\end{equation}
Since \eqref{E:1.3eq} and \eqref{E:1.6eq} show
\begin{equation}
\frac{d-1}d-\theta = \frac1{s(2+\epsilon)}
\end{equation}
using $\theta\le1/2$ we can combine \eqref{E:3.26eq} with \eqref{E:1.4eq} to get
\begin{equation}
\label{E:3.29eq}
\Bigl(\,\sum_{x\in\Z^d}f(x)^{s(2+\epsilon)}\Bigr)^{\frac1{s(2+\epsilon)}}
\le
2c
(2d)^{\frac12}\,r\,L^{\,\frac d{2q}}\,\beta_L^{\frac1{2q}}\,
D_1(f,f)^{\frac12}.
\end{equation}
Plugging this in \eqref{E:1.1eq}, we obtain
\begin{equation}
\Bigl(\sum_{x\in \Z^d}f(x)^{2+\epsilon}\,\bnu(x)\Bigr)^{\frac2{2+\epsilon}}
\le 8d  c^2\,r^2 \,
L^{\frac{d}q+\frac{2d}{p(2+\epsilon)}}\,\alpha_L^{\frac2{p(2+\epsilon)}}\beta_L^{1/q}\,D_1(f,f).
\end{equation}
The claim now follows from \eqref{E:3.20eq} and (for the $r^2$ term) \eqref{E:1.3eq}.
\end{proofsect}

\begin{remark}\rm As noted by a referee, by \eqref{r:remark},
$$2+\epsilon=2\left(1-\frac{1}{p}\right)\frac{d}{d-2+d/q}.$$ The inequality \eqref{E:3.23eq-} in Lemma \ref{lemma0-1Mar} (with an imprecise constant) can be directly deduced from  a weighted version of Sobolev inequality given in \cite[(26) in Remark 3.6]{ADS}.
\end{remark}

For the next lemma, let
\begin{equation}\label{D_0def}
D_0(f,f):=\sum_{\begin{subarray}{c}x,y\in\Z^d\\ |x-y|=1\end{subarray}}
\bigl[f(x)-f(y)\bigr]^2
\end{equation}
denote the Dirichlet form associated with the simple random walk.
Recall that~$\mu$ is the counting measure on $\Z^d$.
Then we have:

\begin{lemma}\label{lemma0-2Kum}
For each~$d\ge2$ there is a constant~$c(d)\in(0,\infty)$ such that
for all $\epsilon\in(0,\frac4{d-2})$ and all
 $f\in \ell^2(\hm)$,
\begin{equation}
\label{E:functional-2}
\Bigl(\,\sum_{x\in \Z^d} f(x)^{2+\epsilon}\Bigr)^{\frac 2{2+\epsilon}}\le \bigl(c(d)\,
 (2+\epsilon)\bigr)^{\frac{\epsilon d}{2+\epsilon}}\,D_0(f,f)^{\frac{\epsilon d}{2(2+\epsilon)}} \,\Bigl(\,\sum_{x\in \Z^d} f(x)^2\Bigr)^{1-\frac{\epsilon d}{2(2+\epsilon)}}.
\end{equation}
\end{lemma}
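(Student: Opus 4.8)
\emph{Proposed proof of Lemma~\ref{lemma0-2Kum}.}
The plan is to deduce \eqref{E:functional-2} from the $\ell^1$-Sobolev (isoperimetric) inequality on $\Z^d$ by the classical substitution $g=f^r$, followed by a single H\"older interpolation between $\ell^2$ and a supercritical exponent. It suffices to prove the inequality for finitely supported $f\ge 0$: the general case follows by applying it to $f\mathbf 1_{B(0,n)}$ and letting $n\to\infty$, since we may assume $\sum_x f(x)^{2+\epsilon}<\infty$ and $\sum_x f(x)^2<\infty$ (otherwise the relevant side is $+\infty$, recalling that $1-\tfrac{\epsilon d}{2(2+\epsilon)}>0$ precisely because $\epsilon<\tfrac4{d-2}$), whence $f\in\ell^2$ and the $D_0$-contribution of the bonds across $\partial B(0,n)$ is $\le 2d\sum_{|x|_\infty=n}f(x)^2\to 0$.

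First I would set $r:=2+\tfrac\epsilon2$, so that $r\ge 2$ and $2(r-1)=2+\epsilon$, and argue exactly as in the derivation of \eqref{E:1.4eq} with this $r$: the $\ell^1$-Sobolev inequality together with $|a^r-b^r|\le r|a-b|(a^{r-1}+b^{r-1})$, the symmetry of the bond sum, Cauchy--Schwarz, and $\sum_{|x-y|=1}f(x)^{2(r-1)}=2d\sum_x f(x)^{2(r-1)}$ give, with $p:=r\tfrac d{d-1}$ and the shorthand $\|f\|_t:=(\sum_x f(x)^t)^{1/t}$,
\[
\|f\|_p^{\,r}\le C_1\,r\,D_0(f,f)^{1/2}\,\|f\|_{2+\epsilon}^{\,r-1},
\]
where $C_1=C_1(d)$ is, up to a factor $\sqrt{2d}$, the $\ell^1$-isoperimetric constant of $\Z^d$. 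Note that it is the factor $r\asymp 2+\epsilon$ here that ultimately produces the $(2+\epsilon)$ in the conclusion.

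Next I would run the interpolation. The key algebraic fact is $2<2+\epsilon<p$: the left inequality is immediate, and $p>2+\epsilon$ is \emph{exactly} equivalent to $\epsilon(d-2)<4$, i.e.\ to the hypothesis $\epsilon<\tfrac4{d-2}$ (and automatic when $d=2$, where $p=4+\epsilon$). Hence there is $\mu\in(0,1)$ with $\tfrac1{2+\epsilon}=\tfrac{1-\mu}2+\tfrac\mu p$, and log-convexity of $\ell^t$-norms gives $\|f\|_{2+\epsilon}\le\|f\|_2^{1-\mu}\|f\|_p^{\,\mu}$. Substituting into the displayed inequality, collecting the powers of $\|f\|_p$ (the exponent $r-\mu(r-1)=(1-\mu)r+\mu$ is positive), solving for $\|f\|_p$, and re-inserting into $\|f\|_{2+\epsilon}\le\|f\|_2^{1-\mu}\|f\|_p^{\,\mu}$ yields
\[
\|f\|_{2+\epsilon}\le\bigl(C_1 r\bigr)^{\frac{\mu}{r-\mu(r-1)}}\,D_0(f,f)^{\frac{\mu}{2(r-\mu(r-1))}}\,\|f\|_2^{\,\frac{r(1-\mu)}{r-\mu(r-1)}}.
\]
A direct computation with the value of $\mu$ above (or the scaling heuristics $f\mapsto\lambda f$ and $f(\cdot)\mapsto f(\cdot/L)$) shows $\tfrac{\mu}{r-\mu(r-1)}=\tfrac{\epsilon d}{2(2+\epsilon)}$, hence $\tfrac{r(1-\mu)}{r-\mu(r-1)}=1-\tfrac{\epsilon d}{2(2+\epsilon)}$; squaring produces precisely the exponents of $D_0(f,f)$ and $\sum_x f(x)^2$ in \eqref{E:functional-2}. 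For the constant, since $r=2+\tfrac\epsilon2\le 2+\epsilon$ we have $C_1 r\le C_1(2+\epsilon)$, so \eqref{E:functional-2} holds with $c(d):=C_1(d)$ (or any larger dimensional constant).

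I expect the only genuinely delicate point to be the interpolation step, which requires $2+\epsilon<p=(2+\tfrac\epsilon2)\tfrac d{d-1}$: this is the sole place where the hypothesis $\epsilon\in(0,\tfrac4{d-2})$ enters, and as $\epsilon$ approaches the threshold one has $p\downarrow 2+\epsilon$, $\mu\uparrow 1$, and $\tfrac{\epsilon d}{2(2+\epsilon)}\uparrow 1$ — the inequality degenerating into the critical Sobolev inequality, which fails on $\Z^d$ beyond this exponent. Everything else (the power-rule inequality, Cauchy--Schwarz, the bond count, and the truncation reducing to finitely supported $f$) is routine.
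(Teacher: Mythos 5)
Your proof is correct, and it takes a technically different (arguably cleaner) route than the paper's. Both arguments implement the same overall strategy --- derive a Gagliardo--Nirenberg inequality from the $\ell^1$-Sobolev inequality on $\Z^d$ applied to $g=f^r$, followed by interpolation --- but they differ in the choice of $r$ and in how the Hölder steps are organized. You fix $r=2+\tfrac\epsilon2$ so that Cauchy--Schwarz after the power-rule bound produces $\|f\|_{2+\epsilon}^{r-1}$ directly, and then close the loop with a single log-convexity interpolation $\|f\|_{2+\epsilon}\le\|f\|_2^{1-\mu}\|f\|_p^\mu$ through the Sobolev exponent $p=r\tfrac d{d-1}$; the condition $\epsilon<\tfrac4{d-2}$ enters only to ensure $p>2+\epsilon$. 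The paper instead sets $r:=\max\{2,(2+\epsilon)\tfrac{d-1}d\}$, replaces your Cauchy--Schwarz with a three-factor Hölder splitting $f^{r-1}=f^{\gamma p}\,f^{1-2\gamma}$ (the exponent $\gamma$ chosen so the $\ell^p$-factor can be absorbed into the left side), and then applies one more Hölder with exponent $\beta$ to pass from $\ell^p$ to $\ell^{2+\epsilon}$; the case split in $r$ is engineered so that when $\epsilon\ge\tfrac2{d-1}$ one has $p=2+\epsilon$ and the final Hölder is vacuous. Your exponent bookkeeping ($\tfrac{\mu}{r-\mu(r-1)}=\tfrac{\epsilon d}{2(2+\epsilon)}$) checks out, the bound $C_1 r\le C_1(2+\epsilon)$ recovers the stated constant, and your truncation step to finitely supported $f$ is sound since $f\in\ell^2$ forces $\sum_{|x|_\infty=n}f(x)^2\to0$; so no gaps. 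The only thing I would tidy is the parenthetical ``we may assume $\sum_x f(x)^{2+\epsilon}<\infty$'' --- this is not needed (and cannot be assumed a priori): if $\|f\|_2$ and $D_0(f,f)$ are finite, the truncated inequality already forces $\|f\|_{2+\epsilon}<\infty$ in the limit.
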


\begin{proofsect}{Proof}
Let~$\epsilon\in(0,\frac4{d-2})$ and set
\begin{equation}
\label{E:3.32eq}
r:=\max\Bigl\{2,(2+\epsilon)\frac{d-1}d\Bigr\}.
\end{equation}
Then $r\frac d{d-1}>2$ and so there exist unique $\beta,\gamma\in\R$ such that
\begin{align}
 2+\epsilon&=\beta r\frac d{d-1}+(1-\beta)2,\label{MK211-1}\\
 r-1&=\gamma r\frac d{d-1}+\bigl(\tfrac 12-\gamma\bigr)2.
\label{MK211-2}
\end{align}
A calculation shows
\begin{equation}
\label{E:3.35eq}
\beta=\epsilon\Bigl(\frac{rd}{d-1}-2\Bigr)^{-1}\quad\text{and}\quad
\gamma=(r-2)\Bigl(\frac{rd}{d-1}-2\Bigr)^{-1}.
\end{equation}
 We claim that   $\beta\in(0,1]$ and~$\gamma\in[0,1/2)$. Indeed, $\beta>0$ and $\gamma\ge 0$ are immediate from \eqref{E:3.35eq} and $r\ge2$. The inequality $\beta\le 1$ is equivalent $r\ge \frac{d-1}d(2+\epsilon)$, which holds for our choice of~$r$ in \eqref{E:3.32eq}, while $\gamma < 1/2$ is equivalent to $r< 2\frac{d-1}{d-2}$. This requires $2+\epsilon< 2\frac d{d-2}$, which holds thanks to $\epsilon< \frac4{d-2}$.

 We first assume that $f\colon\Z^d
\to[0,\infty)$ {has} compact support.  Using \eqref{MK211-1} and H\"older's inequality we have
\begin{equation}\label{MK212-1}
\sum_{x\in \Z^d} f(x)^{2+\epsilon}\le
\Bigl(\,\sum_{x\in \Z^d} f(x)^{\frac{rd}{d-1}}\Bigr)^\beta
\Bigl(\,\sum_{x\in \Z^d} f(x)^{2}\Bigr)^{1-\beta}.
\end{equation}
Furthermore,  by the $\ell^1$-Sobolev inequality on~$\Z^d$, as in  \eqref{E:1.4eq} we obtain
\begin{equation}
\begin{aligned}
\Bigl(\sum_{x\in \Z^d} f(x)^{\frac{rd}{d-1}}\Bigr)^{\frac{d-1}d}
&\le  c\,r\sum_{x\in \Z^d} f(x)^{r-1}\bigl|f(x)-f(y)\bigr|\\
&\le  c\,rD_0(f,f)^{\frac 12}
\Bigl(\sum_{x\in \Z^d} f(x)^{\frac{rd}{d-1}}\Bigr)^\gamma
\Bigl(\sum_{x\in \Z^d} f(x)^{2}\Bigr)^{\frac 12-\gamma},
\end{aligned}
\end{equation}
where~$c\in(0,\infty)$ depends only on the spatial dimension~$d$ and where we relied on  \eqref{MK211-2} to get the second inequality. Hence we get
\begin{equation}\label{MK212-2}
\Bigl(\sum_{x\in \Z^d} f^{\frac{rd}{d-1}}(x)\Bigr)^{\frac{d-1}d-\gamma}
\le c\,rD_0(f,f)^{\frac 12}\Bigl(\sum_{x\in \Z^d} f^{2}(x)\Bigr)^{\frac 12-\gamma}.
\end{equation}
Noting that
\begin{equation}
\frac{d-1}d-\gamma = \frac\beta\epsilon\biggl[\frac{d-1}d\Bigl(\frac{rd}{d-1}-2\Bigr)-(r-2)\biggr]
=\frac{2\beta}{d\epsilon}
\end{equation}
and, after a short calculation, also
\begin{equation}
1-\beta+\bigl(\tfrac12-\gamma\bigr)\frac{d\epsilon}2 = \frac{2+\epsilon}2-\frac{\epsilon d}4,
\end{equation}
from \eqref{MK212-1} and \eqref{MK212-2} we conclude
\begin{equation}
\sum_{x\in \Z^d} f(x)^{2+\epsilon}\le  (c\,r)^{\frac{\epsilon d}2} D_0(f,f)^{\frac{\epsilon d}4} \Bigl(\,\sum_{x\in \Z^d} f(x)^2\Bigr)^{\frac{2+\epsilon}2-\frac{\epsilon d}4}.
\end{equation}
Raising both sides to $\frac2{2+\epsilon}$ and using the definition of~$r$,
the conclusion for all $f\colon\Z^d
\to[0,\infty)$ with compact support  follows.

 Next we suppose that $f\in \ell^2(\hm)$. Choose a sequence $\{f_n\}_{n\ge1}$  of functions with compact support such that $f_n$ converges to $f$ in $\ell^2(\hm)$ as $n\to \infty$. As $D_0(f,f)\le 8d \|f\|_{\ell^2(\hm)}^2$ for all $f\in \ell^2(\hm)$, we have
$D_0(f_n,f_n)\to D_0(f,f)$ as $n\to \infty$. Therefore, the conclusion for $f\in\ell^2(\hm)$ follows by applying $f_n$ into \eqref{E:functional-2} first and then letting $n\to \infty$.
\end{proofsect}

We remark that an alternative proof of Lemma~\ref{lemma0-2Kum} can be devised based on estimates for the transition probabilities of the simple random walk. In particular, \eqref{E:functional-2} is true even when~$d=1$
with $\varepsilon\in (0,\infty)$. On the other hand, the proof of Lemmas~\ref{lemma0-1Mar} and~\ref{lemma0-2Kum} becomes considerably easier in $d\ge3$ where one can rely on the $\ell^2$-Sobolev inequality.

\smallskip
With the above lemmas in hand, we are ready to give:

\begin{proof}[Proof of Proposition~$\ref{P:Sob}$]
Let~$d\ge2$ and suppose Assumption~\ref{Asmp2} holds. Since the inequality in \eqref{E:1.7*} is strict, we may assume that both indices are finite, i.e., $p,q\in(\frac d2,\infty)$. Under Assumption~\ref{ass1}, the Spatial Ergodic Theorem yields the existence of a constant~$c_0\in(0,\infty)$ and a random variable $R_0=R_0(\omega)$ (which may depend on $p,q$) with $\BbbP(1\le R_0<\infty)=1$ such that
\begin{equation}\label{e:note01b}
\forall R\ge R_0\colon\quad\sum_{x\in B(0, 16 R)} \hn(x)^p\le c_0 R^d
\quad\text{and}\quad
\sum_{\begin{subarray}{c}
x\in B(0, 16  R)\\y\colon|y-x|=1
\end{subarray}}
 (C_{x,y})^{-q}\le c_0 R^d.   %%We add ".".
\end{equation}
The definitions \twoeqref{E:3.11eq}{E:3.12eq} of $C_{x,y}^R$ and $\tn^R$ then give
\begin{equation}
\label{e:note01a}
\forall R\ge R_0\colon\quad
\sup_{x\in \Z^d}\sum_{z\in B(x, 8 R)} \tn^R(z)^p\le c_1 R^d
\quad\text{and}\quad
\sum_{\begin{subarray}{c}
x\in B(0, 8 R)\\y\colon|y-x|=1
\end{subarray}}
 (\tC^R_{x,y})^{-q}\le c_1 R^d
\end{equation}
for some $c_1\in (0,\infty)$
that also may depend on $p$ and $q$.

Let $\epsilon\in(0,\frac4{d-2})$ solve \eqref{E:3.20eq} and fix~$\kappa\in(0,1]$.
Lemma~\ref{lemma0-1Mar} with $\bar\nu(x):=\nu^R(x)$, $\bar C_{x,y}:=C_{x,y}^R$, $L:=
8 R$) along with \eqref{e:note01a}  shows
 the existence of a constant $c_2\in(0,\infty)$ that depends only on~$d$, $p$, $\epsilon$ and~$c_1$ above such that
\begin{equation}
\label{e:euq-1}
\left(\sum_{x\in \Z^d} f(x)^{2+\epsilon}\tn^R(x)\right)^{\frac2{2+\epsilon}}
\le c_2\,R^{2-\frac{d\epsilon}{2+\epsilon}}\,\wt D^{R,\kappa}(f,f)
\end{equation}
holds for all~$R\ge R_0$ with~$\kappa R\ge1$ and all~$f\colon\Z^d\to[0,\infty)$ such that~$\supp(f)\subseteq B(0, 8 R)$. Here we used that $D_1(f,f)\le\wt D^{R,\kappa}(f,f)$
due to~$\kappa R\ge1$ and the choice $\bar C_{x,y}:=C_{x,y}^R$.

Next we invoke Lemma~\ref{lemma0-2Kum} along with the fact that, for some constant~$c>0$,
\begin{equation}
r^{2-\frac{d\epsilon}{2+\epsilon}}a+r^{-\frac{d\epsilon}{2+\epsilon}}b
\ge c a \Bigl(\frac ba\Bigr)^{1-\frac{\epsilon d}{2(2+\epsilon)}}
\end{equation}
is valid for all~$a,b,r>0$, to get the existence of~$c_3\in(0,\infty)$ such that, for all~$f\colon\Z^d\to[0,\infty)$ with~$\supp(f)\subseteq B(0, 4 R)^\cc$,
\begin{equation}
\label{e:equ-2}
\begin{aligned}
\left(\,\sum_{x\in \Z^d} f(x)^{2+\epsilon}\tn^R(x)\right)^{\frac2{2+\epsilon}}
&\le c_3 R^{-\frac{d\epsilon}{2+\epsilon}}\left(R^2 D_0(f,f)+\sum_{x\in \Z^d} f(x)^2\right)
\\
&\le c_3 \left(R^{2-\frac{d\epsilon}{2+\epsilon}}  \wt D^{R,\kappa}(f,f)+R^{-\frac{d\epsilon}{2+\epsilon}}\sum_{x\in \Z^d} f(x)^2\tn^R(x)\right).
\end{aligned}
\end{equation}
Here we used  $\nu^R(x)=1$ for all $x\in B(0,4R)^c$ and  the definitions \twoeqref{E:3.11eq}{E:3.12eq} along with $\kappa R\ge 1$ to ensure that the conductance $\tC^R_{x,y}$ is no smaller than that of the simple random walk whenever~$x$ or~$y$ is in~$\supp(f)$.

Consider a mollifier $\phi_R\colon
 \Z^d \to[0,1]$ subject to
\begin{equation}
\phi_R(x)
\begin{cases}=1,&\quad x\in B(0,4 R),\\
\in [0,1],&\quad x\in B(0,8 R)\smallsetminus B(0,4  R),\\
=0,&\quad x\in B(0,8R)^\cc,
\end{cases}
\end{equation}
and
\begin{equation}
\bigl|\phi_R(x)-\phi_R(y)\bigr|\le \frac{|x-y|}{2R},\quad x,y\in \Z^d.
\end{equation}
Let $f\colon\Z^d\to[0,\infty)$. Since $\supp(f\phi_R)\subseteq B(0,8R)$ while $\supp(f(1-\phi_R))\subseteq B(0,4R)^\cc$, the bounds \eqref{e:euq-1} and \eqref{e:equ-2} show
\begin{equation}
\label{E:3.49eq}
\begin{aligned}
\biggl(&\,\sum_{x\in \Z^d} f(x)^{2+\epsilon} \tn^R(x)\biggr)^{\frac2{2+\epsilon}}
\\
&\le  2\left(\,\sum_{x\in \Z^d} \bigl(f(x)\phi_R(x)\bigr)^{2+\epsilon}\tn^R(x)\right)^{\frac2{2+\epsilon}}  +2\left(\sum_{x\in \Z^d} \bigl(f(x)(1-\phi_R(x))\bigr)^{2+\epsilon}\tn^R(x)\right)^{\frac2{2+\epsilon}}
\\
&\le  c_4 R^{2-\frac{d\epsilon}{2+\epsilon}}\Bigl[\wt D^{R,\kappa}(f\phi_R, f\phi_R)+\wt D^{R,\kappa} \bigl(f(1-\phi_R), f(1-\phi_R)\bigr)\Bigr]
\\
&\qquad\qquad\quad+c_4 R^{-\frac{d\epsilon}{2+\epsilon}}\sum_{x\in \Z^d} f(x)^2 \tn^R(x),
\end{aligned}
\end{equation}
where~$c_4:=2\max\{c_2,c_3\}$. For the sum of the two Dirichlet forms we then get
\begin{equation}
\begin{aligned}
\wt D^{R,\kappa}(f\phi_R, f\phi_R)+&\wt D^{R,\kappa} \bigl(f(1-\phi_R), f(1-\phi_R)\bigr)
\\
&\le 4\wt D^{R,\kappa}(f,f)+4\sum_{\begin{subarray}{c}
x,y\in\Z^d\\|x-y|\le \kappa R
\end{subarray}}
\tC^R_{x,y}\bigl[\phi_R(x)-\phi_R(y)\bigr]^2f(x)^2
\\
&\le  4\wt D^{R,\kappa}(f,f)+ R^{-2}\sum_{\begin{subarray}{c}
x,y\in\Z^d\\|x-y|\le \kappa R
\end{subarray}}
\tC^R_{x,y}|x-y|^2f(x)^2
\\
&\le 4\wt D^{R,\kappa}(f,f)+(1+2d)R^{-2}\sum_{x\in\Z^d}f(x)^2\tn^R(x),
\end{aligned}
\end{equation}
where \eqref{e:bound0} was used in the last inequality. Plugging this in \eqref{E:3.49eq}, the claim follows.
\end{proof}

We note that the above proof highlights the need for~$\hn$ as a reference measure
and its modification~$\nu^R$.

\subsection{Heat-kernel estimates}
 We will now apply the above functional inequalities to estimates of the heat kernels. Denote by~$Y^R:=\{Y^R_t:t\ge0\}$ the Hunt process associated with $(\wt
D^R,\wt\Fs^R)$ and let $p^R(t,x,y)$ be the associated transition probabilities. Similarly, write $Y^{R,\kappa}:=\{Y^{R,\kappa}_t: t\ge0\}$ for the Hunt process associated with $(\wt D^{ R,\kappa},\wt
\Fs^R)$ and let $p^{R,\kappa}(t,x,y)$ be the corresponding the transition probabilities.

\smallskip
We start a simple consequence of Proposition~\ref{P:Sob}:

\begin{lemma}
\label{C:Sob}
Suppose that Assumptions~$\ref{ass1}$ and $\ref{Asmp2}$ hold, and let~$\epsilon\in(0,\frac4{d-2})$ and the random variable $R_0:=R_0(\omega)$ be as in Proposition~$\ref{P:Sob}$. Then there exists a constant $c>0$
such that
\begin{equation}
\label{E:3.51eq}
p^{R,\kappa}(t, x,y)\le cR^{-d}\left(\frac{t}{R^2}\right)^{-\frac{2+\epsilon}{\epsilon}}\texte^{\frac12 tR^{-2}}\tn^R(y)
\end{equation}
holds for all $\kappa\in (0,1)$, all $R\ge R_0$ with~$\kappa R\ge1$, all $t>0$ and all $x,y\in \Z^d$.
\end{lemma}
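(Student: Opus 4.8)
The statement to prove, Lemma~\ref{C:Sob}, is a Nash-type on-diagonal heat-kernel upper bound derived from the Sobolev inequality of Proposition~\ref{P:Sob}. The strategy is the classical Nash--Moser argument adapted to the localized, truncated Dirichlet form $(\wt D^{R,\kappa},\wt\Fs^R)$ with reference measure $\tn^R$. I would work with the $\ell^2(\tn^R)$-semigroup $P^{R,\kappa}_t$ generated by $(\wt D^{R,\kappa},\wt\Fs^R)$ and first establish an ultracontractivity bound $\|P^{R,\kappa}_t\|_{\ell^1(\tn^R)\to\ell^\infty(\tn^R)}\le c R^{-d}(t/R^2)^{-(2+\epsilon)/\epsilon}\texte^{\frac12 tR^{-2}}$, which by symmetry of $p^{R,\kappa}(t,x,y)$ with respect to $\tn^R$ and the reproducing/semigroup property is equivalent to \eqref{E:3.51eq}.

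The core computation: for $u_t:=P^{R,\kappa}_t f$ with $f\ge 0$, $f\in\ell^1(\tn^R)\cap\ell^2(\tn^R)$, set $N(t):=\|u_t\|_{\ell^2(\tn^R)}^2$. Then $\frac{\textd}{\textd t}N(t)=-2\wt D^{R,\kappa}(u_t,u_t)$. Apply Proposition~\ref{P:Sob} to $f=u_t$ (note $u_t\ge0$ since the semigroup is positivity-preserving), rearranged to bound $\wt D^{R,\kappa}(u_t,u_t)$ from below by a multiple of $R^{\frac{d\epsilon}{2+\epsilon}-2}\|u_t\|_{\ell^{2+\epsilon}(\tn^R)}^2 - R^{-2}\|u_t\|_{\ell^2(\tn^R)}^2$. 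Then interpolate: by H\"older $\|u_t\|_{\ell^2(\tn^R)}\le\|u_t\|_{\ell^{2+\epsilon}(\tn^R)}^{\theta}\|u_t\|_{\ell^1(\tn^R)}^{1-\theta}$ for the appropriate $\theta\in(0,1)$ determined by $\frac12=\frac{\theta}{2+\epsilon}+(1-\theta)$, and use that $\|u_t\|_{\ell^1(\tn^R)}\le\|f\|_{\ell^1(\tn^R)}=:M$ by contractivity of the semigroup on $\ell^1(\tn^R)$ (which holds because $\tn^R$ is a reversible measure, so the dual semigroup on $\ell^1$ is the $\ell^\infty$-semigroup, itself contractive as $\wt D^{R,\kappa}$ is Markovian). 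This yields a differential inequality of the form $N'(t)\le -c R^{\frac{d\epsilon}{2+\epsilon}-2}M^{-2\epsilon/(2+\epsilon)\cdot(\cdots)}N(t)^{1+\epsilon/(2+\epsilon)\cdot(\cdots)}+cR^{-2}N(t)$; absorbing the lower-order $R^{-2}N$ term via the substitution $\wt N(t):=\texte^{-cR^{-2}t}N(t)$ (this is the source of the $\texte^{\frac12 tR^{-2}}$ factor — one tracks the constant to land at exactly $\frac12$, or at any rate a harmless absolute constant that can be renamed), and integrating the resulting pure power-law ODE $\wt N'\le -c'\wt N^{1+a}$ gives $N(t)\le C M^2 R^{-d}(t/R^2)^{-(2+\epsilon)/\epsilon}\texte^{cR^{-2}t}$, i.e. $\|P^{R,\kappa}_t\|_{\ell^1(\tn^R)\to\ell^2(\tn^R)}\le$ that square root. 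Composing the semigroup at time $t$ as $P^{R,\kappa}_{t/2}\circ P^{R,\kappa}_{t/2}$ and using $\ell^1\to\ell^2$ together with the $\ell^2\to\ell^\infty$ bound (the adjoint statement) upgrades this to the $\ell^1\to\ell^\infty$ bound, hence to the pointwise kernel estimate \eqref{E:3.51eq}; the $\tn^R(y)$ on the right arises exactly because $p^{R,\kappa}(t,x,y)$ is the kernel against the measure $\tn^R(\textd y)$.

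**Main obstacle.** The delicate point is \emph{not} the Nash iteration itself, which is standard, but the bookkeeping of the two $R$-dependent terms in the Sobolev inequality \eqref{e:Sob}: the good term carries $R^{2-\frac{d\epsilon}{2+\epsilon}}$ in front of the Dirichlet form and the error term carries $R^{-\frac{d\epsilon}{2+\epsilon}}$ in front of $\|f\|_{\ell^2(\tn^R)}^2$, and one must check that after the interpolation step the exponents of $R$ combine precisely to produce the stated $R^{-d}$ prefactor and $(t/R^2)^{-(2+\epsilon)/\epsilon}$ scaling — this is a consistency check that works out because \eqref{e:Sob} is scale-consistent by design (it is the discrete analogue of the Euclidean Sobolev inequality at scale $R$). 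One must also be careful that Proposition~\ref{P:Sob} is stated for $f\colon\Z^d\to[0,\infty)$, so it applies to $u_t$ only because the semigroup preserves positivity; and that the constant $c$ in \eqref{E:3.51eq} is allowed to depend on $d,p,q,\epsilon$ and on the constant $c_1$ from \eqref{e:Sob} but not on $R,\kappa,t,x,y$, which the argument respects since all constants introduced are of that type and $R_0$ is the same random variable. A final minor point: the restriction $\kappa\in(0,1)$ versus $\kappa\in(0,1]$ in Proposition~\ref{P:Sob} is immaterial, and $\kappa R\ge1$ is inherited directly from the hypotheses of \eqref{e:Sob}.
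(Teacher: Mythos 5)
Your proposal is correct and follows essentially the same route as the paper: both derive a Nash inequality from Proposition~\ref{P:Sob} by the H\"older interpolation $\|f\|_{\ell^2(\tn^R)}\le\|f\|_{\ell^{2+\epsilon}(\tn^R)}^{\theta}\|f\|_{\ell^1(\tn^R)}^{1-\theta}$, and both convert that Nash inequality into the on-diagonal bound. The only difference is that the paper stops once the Nash inequality \eqref{e:Nash} is in hand and cites Carlen--Kusuoka--Stroock [Theorem~(2.1)] for the Nash-to-ultracontractivity step, while you unroll that step by hand via the $N'(t)\le -cN(t)^{1/\theta}/M^{2(1-\theta)/\theta}+cR^{-2}N(t)$ differential inequality, the exponential-weight substitution, and the $\ell^1\to\ell^2\to\ell^\infty$ composition. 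That is exactly the proof of the cited CKS theorem, so the mathematical content is the same; one minor bookkeeping note is that with the paper's unnormalized Dirichlet form (sum over ordered pairs, no $\tfrac12$) one gets $N'(t)=-\wt D^{R,\kappa}(u_t,u_t)$ rather than $-2\wt D^{R,\kappa}$, which is precisely the ``missing $1/2$'' the paper flags and which changes only the innocuous constant in the exponential factor $\texte^{\frac12 tR^{-2}}$.
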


\begin{proof}
Let $\epsilon\in(0,\frac4{d-2})$ and $R_0$ be as in Proposition~\ref{P:Sob}.
For $f\colon\Z^d\to[0,\infty)$, H\"{o}lder's inequality shows
\begin{equation}
\begin{aligned}
\sum_{x\in \Z^d} f(x)^2\tn^{R}(x)
&=\sum_{x\in \Z^d} f(x)^{\frac{2+\epsilon}{1+\epsilon}} f(x)^{\frac{\epsilon}{1+\epsilon}} \tn^{R}(x)
\\
&\le\left(\sum_{x\in \Z^d} f(x)^{2+\epsilon}\tn^R(x)\right)^{\frac1{1+\epsilon}}\left(\sum_{x\in \Z^d} f(x)\tn^R(x)\right)^{\frac\epsilon{1+\epsilon}}.  %%We add ".". 
\end{aligned}
\end{equation}
Pick~$\kappa\in(0,1)$ and assume $R\ge R_0$ with~$\kappa R\ge1$. Then \eqref{e:Sob} turns this into the Nash inequality
\begin{equation}
\label{e:Nash}
\Vert f\Vert_{\ell^2(\tn^R)}^{4\frac{1+\epsilon}{2+\epsilon}}
\le c_1R^{2-\frac{d\epsilon}{2+\epsilon}}\Bigl(\wt D^{R,\kappa}(f,f)+R^{-2}\|f\|^2_{\ell^2(\tn^R)}\Bigr)\,\Vert f\Vert_{\ell^1(\tn^R)}^{\frac{2\epsilon}{2+\epsilon}}.
\end{equation}
The general equivalence between heat-kernel bounds and the Nash inequality, cf Carlen, Kusuoka and Stroock~\cite[Theorem~(2.1)]{CKS}, states that the Nash inequality (for any real $n>0$)
\begin{equation}
\Vert f\Vert_2^{2+4/n}\le A\Bigl(D(f,f)+\delta\Vert f\Vert_2^2\Bigr)\Vert f\Vert_1^{4/n}
\end{equation}
leads to a uniform bound on the heat kernel by $(n A/t)^{n/2}\texte^{\frac12\delta t}$ --- which reflects the ``missing'' $1/2$ in our normalization of the Dirichlet form.  Applying
this to \eqref{e:Nash} with the specific parameter values $n:=2\frac{2+\epsilon}\epsilon$, $\delta:=R^{-2}$ and $A:=c_1R^{2-\frac{d\epsilon}{2+\epsilon}}$, we get \eqref{E:3.51eq}.
\end{proof}

The inequality \eqref{E:3.51eq} is particularly useful when~$t$ and~$R$ are related by diffusive scaling and it provides a version of a uniform, a.k.a.\ diagonal, heat-kernel upper bound. For the off-diagonal estimate, we have to work somewhat harder:

\begin{proposition}
\label{thm:hitest}
Suppose Assumptions~$\ref{ass1}$ and $\ref{Asmp2}$ hold, and let~$\epsilon\in(0,\frac4{d-2})$ and the random variable $R_0:=R_0(\omega)$ be as in Proposition~$\ref{P:Sob}$. For every $\kappa\in (0,1]$,
there is a constant $c\in(0,\infty)$ such that for all $x,y\in \Z^d$, all $R\ge R_0(\omega)$ with $\kappa R\ge1$ and all $0<t\le R^2$,
\begin{equation}
\label{qtesr}
p^{R,\kappa}(t,x,y)\le c R^{-d}\Bigl(\frac{t}{R^2}\Bigr)^{-\frac{2+\epsilon}{\epsilon}} \exp\left(-\frac{|x-y|}{5\kappa R}\log \left(\frac{R^2}{t}\right)\right)\tn^R(y).
\end{equation}
\end{proposition}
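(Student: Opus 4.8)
The plan is to obtain the off-diagonal bound \eqref{qtesr} from the on-diagonal bound \eqref{E:3.51eq} by the Davies method adapted to the truncated, localized Dirichlet form $(\wt D^{R,\kappa},\wt\Fs^R)$, exploiting the fact that truncation removes all jumps longer than $\kappa R$ and hence makes the associated ``carré du champ''-type perturbation quantities finite. Fix $\kappa\in(0,1]$, $R\ge R_0$ with $\kappa R\ge1$, and a function $\psi\colon\Z^d\to\R$ that is Lipschitz with respect to the graph metric, $|\psi(x)-\psi(y)|\le\alpha$ whenever $|x-y|\le\kappa R$, for a parameter $\alpha>0$ to be optimized. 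Consider the tilted transition kernel
\begin{equation}
p^{R,\kappa}_\psi(t,x,y):=\texte^{\psi(x)}p^{R,\kappa}(t,x,y)\texte^{-\psi(y)},
\end{equation}
which is the kernel of the semigroup generated by the conjugated operator $\texte^{\psi}\cmss L^{R,\kappa}\texte^{-\psi}$. The first step is to control how much the Dirichlet form is distorted by this conjugation: for the truncated form one has, writing $u=\texte^{-\psi}f$,
\begin{equation}
\label{plan-E1}
\wt D^{R,\kappa}(\texte^{\psi}u,\texte^{-\psi}u)\ge -\Lambda(\psi)\sum_{x\in\Z^d}u(x)^2\,\tn^R(x)
\end{equation}
with a perturbation constant $\Lambda(\psi)$ controlled by $\sup_x \tn^R(x)^{-1}\sum_{y:|x-y|\le\kappa R}\tC^R_{x,y}(\texte^{\psi(x)-\psi(y)}-1)(\texte^{\psi(y)-\psi(x)}-1)$; since $|\psi(x)-\psi(y)|\le\alpha$ on the support of the sum and $\texte^a+\texte^{-a}-2\le a^2\texte^{|a|}\le \alpha^2\texte^{\alpha}$ for $|a|\le\alpha$, this is bounded using \eqref{e:bound0} by $\Lambda(\psi)\le (1+2d)(\kappa R)^{-2}\,\alpha^2\texte^{\alpha}$, up to constants. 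This is precisely where the normalization $\hn$ and the truncation at scale $\kappa R$ are used together.

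The second step is the standard Davies integrated-maximum-principle / Gross-type argument: the on-diagonal bound \eqref{E:3.51eq}, written as $p^{R,\kappa}(t,x,y)\le c R^{-d}(t/R^2)^{-\frac{2+\epsilon}{\epsilon}}\texte^{tR^{-2}/2}\tn^R(y)$, together with the form-perturbation bound \eqref{plan-E1}, yields for the tilted semigroup
\begin{equation}
p^{R,\kappa}_\psi(t,x,y)\le c R^{-d}\Bigl(\frac{t}{R^2}\Bigr)^{-\frac{2+\epsilon}{\epsilon}}\exp\Bigl(\tfrac12 tR^{-2}+t\,\Lambda(\psi)\Bigr)\tn^R(y),
\end{equation}
by a semigroup-splitting argument: write $P^{R,\kappa}_{\psi,t}=P^{R,\kappa}_{\psi,t/2}\circ P^{R,\kappa}_{\psi,t/2}$, estimate $\|P^{R,\kappa}_{\psi,t/2}\|_{\ell^1(\tn^R)\to\ell^2(\tn^R)}$ and $\|P^{R,\kappa}_{\psi,t/2}\|_{\ell^2(\tn^R)\to\ell^\infty(\tn^R)}$ by the self-adjoint version of the Nash/heat-kernel estimate applied to the perturbed generator (the perturbation shifts $\delta=R^{-2}$ to $R^{-2}+2\Lambda(\psi)$), and multiply. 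Undoing the tilt gives, for every admissible $\psi$,
\begin{equation}
p^{R,\kappa}(t,x,y)\le c R^{-d}\Bigl(\frac{t}{R^2}\Bigr)^{-\frac{2+\epsilon}{\epsilon}}\exp\Bigl(-\psi(x)+\psi(y)+\tfrac12 tR^{-2}+t\,\Lambda(\psi)\Bigr)\tn^R(y).
\end{equation}

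The third step is the optimization. Choose $\psi(z):=\lambda\bigl(|z-y|\wedge|x-y|\bigr)/(\kappa R)$ for a parameter $\lambda>0$; this is $\lambda$-Lipschitz on the scale $\kappa R$, so we take $\alpha=\lambda$, and it satisfies $\psi(y)=0$, $\psi(x)=\lambda|x-y|/(\kappa R)$. Plugging in $\Lambda(\psi)\lesssim (\kappa R)^{-2}\lambda^2\texte^{\lambda}$ and using $t\le R^2$, the exponent becomes, up to additive constants,
\begin{equation}
-\frac{\lambda|x-y|}{\kappa R}+C\,\frac{t}{\kappa^2 R^2}\lambda^2\texte^{\lambda}.
\end{equation}
Setting $\lambda=\log(R^2/t)$ (so $\texte^{\lambda}=R^2/t$) makes the second term $\lesssim \kappa^{-2}(\log(R^2/t))^2$, which for $t\le R^2$ and after enlarging the denominator in the first term from $\kappa R$ to $5\kappa R$ is dominated by half of the first term once $|x-y|$ is large, and is otherwise absorbed into the prefactor constant; a routine case split on whether $|x-y|$ is large or small compared to $\kappa R\log(R^2/t)$ then yields \eqref{qtesr}. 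The main obstacle I expect is bookkeeping the perturbation constant $\Lambda(\psi)$ carefully enough that the final constant in the exponent comes out as a universal numeric factor (here $1/5$) independent of $\kappa,R,t$; this requires being careful that the Lipschitz cutoff distance and the truncation radius $\kappa R$ match, and that the elementary inequality bounding $\texte^{a}+\texte^{-a}-2$ is applied only in the regime $|a|\le\lambda$ where it is sharp enough. Everything else — the Nash-to-heat-kernel passage, the semigroup splitting, the tilt — is standard Davies-method machinery, here made to work because truncation guarantees all the relevant sums are finite and controlled by \eqref{e:bound0}.
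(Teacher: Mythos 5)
Your overall strategy is the same as the paper's: apply the Carlen--Kusuoka--Stroock/Davies method to the truncated form, control the perturbation constant by the uniform bound \eqref{e:bound0}, and optimize an exponentially tilted test function $\psi$. Where you diverge from the paper is the final bookkeeping, and that is exactly where a genuine gap appears.

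After your choice $\lambda=\log(R^2/t)$, the perturbation term in the exponent is of order $\kappa^{-2}\bigl(\log(R^2/t)\bigr)^2$, and you propose a case split: for $|x-y|$ ``large'' it is dominated by half the leading term, and ``otherwise absorbed into the prefactor constant.'' The second half of that claim is false as stated. The statement requires a constant $c$ that is uniform in $t\in(0,R^2]$, and $\kappa^{-2}\bigl(\log(R^2/t)\bigr)^2\to\infty$ as $t/R^2\to0$; for $|x-y|$ comparable to, say, $\kappa R$ (small compared to $\kappa^{-1}R\log(R^2/t)$), the leading term is $O\bigl(\kappa^{-1}\log(R^2/t)\bigr)$ and cannot dominate a quadratic in $\log(R^2/t)$, while the desired right-hand side of \eqref{qtesr} is exponentially small in $\log(R^2/t)$. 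So neither branch of your case split closes the argument, and the extra $\log^2$ survives.

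The repair is a small but essential algebraic step that the paper performs \emph{before} choosing $\lambda$: use the elementary inequality $s^2\texte^{-s}\le 2$ (for $s\ge0$) to absorb the polynomial factor $\lambda^2$ into the exponential. In the paper's normalization, with $\psi(z):=\lambda\bigl(|x_0-y_0|-|x_0-z|\bigr)_+$ so that the jump scale is $\lambda\kappa R$, one has
\begin{equation}
\Gamma_R(\pm\psi)(x)\ \le\ (1+2d)\,\lambda^2\,\texte^{4\lambda\kappa R}
\ \le\ 2(1+2d)\,\kappa^{-2}R^{-2}\,\texte^{5\lambda\kappa R},
\end{equation}
where the second step is $(\lambda\kappa R)^2\texte^{-\lambda\kappa R}\le 2$. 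Then choosing $\lambda=\frac{1}{5\kappa R}\log(R^2/t)$ makes the perturbation term $2(1+2d)\kappa^{-2}\,t R^{-2}\texte^{5\lambda\kappa R}=2(1+2d)\kappa^{-2}$, a genuine constant uniform in $t$, while the tilt produces exactly $-\frac{|x_0-y_0|}{5\kappa R}\log(R^2/t)$. This is the origin of the numerical factor $1/5$ in \eqref{qtesr}: one power of the exponential is sacrificed to swallow $\lambda^2$. In your parametrization (with the $\kappa R$ already divided out of $\psi$) the equivalent fix is $\alpha^2\texte^{\alpha}\le \texte^{2\alpha}$ and then $\lambda:=\frac12\log(R^2/t)$, but in either form this step cannot be replaced by a case split on $|x-y|$.

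Everything else in your outline — the use of \eqref{e:bound0} to bound the perturbation, the semigroup splitting and tilt-undoing, the recovery of the off-diagonal bound from the on-diagonal one — matches the paper and is sound.
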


\begin{proof}
We will invoke an argument from Carlen, Kusuoka and Stroock~\cite{CKS} (based on an earlier argument of Davies~\cite{Davies}) for obtaining off-diagonal heat-kernel bounds from
the Nash inequality \eqref{e:Nash}.
For that we first introduce the auxiliary objects
\begin{equation}
\begin{aligned}
\Gamma _{R}(\psi )(x)& :=\frac1{\hn^R(x)}\sum_{y\in \Z^d} (\texte^{\psi (x)-\psi (y)}-1)^{2}\tC^R_{xy}\IA_{\{|x-y|\le \kappa R\}}, \\
\Lambda (\psi )^{2}& :=\bigl\Vert \Gamma _{R}(\psi )\bigr\Vert _{\infty }\vee \bigl\Vert
\Gamma _{R}(-\psi )\bigr\Vert _{\infty }, \\
E_{R}(t,x,y)&:=\sup \bigl\{|\psi (x)-\psi (y)|-t\Lambda (\psi )^{2}:\ \Lambda (\psi )<\infty \bigr\}.
\end{aligned}
\end{equation}
Here~$\psi$ can be chosen to be any bounded function in the domain of the Dirichlet form $(\wt D^{ R,\kappa},\wt\Fs^R)$. In particular, we can take $\psi$ with bounded support.
Carlen, Kusuoka and Stroock~\cite[Theorem~(3.25)]{CKS} then shows that there is a constant $c_0>0$ such that for all  $\kappa\in (0,1]$, all $R\ge R_0(\omega)$ with~$\kappa R\ge1$, all $t>0$ and all $x,y\in \Z^d$,
\begin{equation}
\label{(7.2)}
p^{R,\kappa}(t,x,y)\leq c_0
\, R^{-d}\left(\frac{t}{R^2}\right)^{-\frac{2+\epsilon}{\epsilon}}
\texte^{ \frac12 tR^{-2}}
\texte^{-E_{R}(2t,x,y)}\tn^R(y)
\end{equation}
which, we note, refines the estimate from Lemma~\ref{C:Sob}.
In order to bring \eqref{(7.2)} into  the desired
form,
it thus suffices to supply a good lower bound on $E_{R}(2t,x,y)$.

 Fix $x_0,y_0\in \Z^d$, let $\lambda\ge0$ and consider the test function
\begin{equation}
\psi (x):=\lambda\bigl(|x_0-y_0|-|x_{0}-x|\bigr)_{+}.
\end{equation}
The triangle inequality gives $|\psi (x)-\psi (y)|\leq \lambda |x-y|$. According to the elementary inequalities $|\texte^{t}-1|^{2}\leq
t^{2}\texte^{2|t|}$ and $t^2\texte^{-|t|}\le2$ for $t\ge0$, we then  get from \eqref{e:bound0} that
\begin{equation}
\begin{aligned}
\Gamma _{R}(\psi)(x) &=\frac1{\hn^R(x)}
\sum_{y\in \Z^d} (\texte^{\psi (x)-\psi (y)}-1)^{2}\tC^R_{xy}\IA_{\{|x-y|\le\kappa R\}}\\
&\leq \frac1{\hn^R(x)}\texte^{4\lambda \kappa R}\lambda ^{2}\sum_{y\in \Z^d} |x-y|^{2}\tC^R_{xy}\IA_{\{|x-y|\le \kappa R\}}
\\
&\leq  (1+2d) \lambda^{2}\texte^{4\lambda \kappa R}\leq  2(1+2d) \kappa^{-2}\texte^{5\lambda
\kappa R}R^{-2}.
\end{aligned}\end{equation}
Since the same bound applies to $\Gamma_R(-\psi)$ as well, the fact that $\psi(x_0)=|x_0-y_0|$ while $\psi(y_0)=0$ shows
\begin{equation}
-E_{R}(2t,x_{0},y_{0})\leq -\lambda |x_0-y_0|+2(1+2d) t\kappa^{-2}\texte^{5\lambda \kappa R}R^{-2}.
\end{equation}
Suppose that $0<t\le R^2$ and set
\begin{equation}
\lambda:=\frac{1}{5\kappa R}\log \Bigl(\frac{R^{2}}{t}\Bigr).
\end{equation}
 Then
\begin{equation}
-E_{R}(2t,x_{0},y_{0}) \le
2(1+2d)\kappa^{-2}- \frac{|x_0-y_0|}{5\kappa R}\log\Bigl(\frac{R^{2 }}{t}\Bigr).
\end{equation}
 Denoting $c:=\texte^{\frac12+2(1+2d)\kappa^{-2}}c_0$, the claim now follows from \eqref{(7.2)}.
\end{proof}

\subsection{ Exit time estimates}
\label{3-3}\noindent
 The uniform estimate on the transition probabilities of the truncated, localized process~$Y^{R,\kappa}$ permits us to control the tails of the exit times thereof. This can then be extended to the process~$Y$ as well. Indeed, given~$A\subseteq\Z^d$, define the first exit time from~$A$ by
\begin{equation}
\tau_A:=\inf\{t>0: Y_t \notin A\}.
\end{equation}
We then have:

\begin{proposition}
\label{P:ex}
Under Assumptions~$\ref{ass1}$ and $\ref{Asmp2}$, there is
a random variable $R_1:=R_1(\omega)$ with $\BbbP(1\le R_1<\infty)=1$ and,  for each~$\delta\in(0,1]$,  a constant $c\in(0,\infty)$ such that for all $t>0$, all $R\ge \delta^{-1}R_1$ and all $x\in B(0,R)$,
\begin{equation}\label{e0}
P^x(\tau_{B(x,\delta R)}\le t)\le \frac{ct}{R^2}.
\end{equation}
\end{proposition}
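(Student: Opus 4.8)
The plan is to pass from $Y$ to the truncated localized process $Y^{R,\kappa}$, use the off-diagonal heat-kernel bound \eqref{qtesr} to control the displacements of $Y^{R,\kappa}$, and then convert a displacement bound into an exit-time bound by a standard strong-Markov iteration. \textbf{Reduction to $Y^{R,\kappa}$.} For $x\in B(0,R)$ and $\delta\in(0,1]$ we have $B(x,\delta R)\subseteq B(0,2R)$, on which $\tC^R_{\cdot,\cdot}=C_{\cdot,\cdot}$ and $\tn^R=\hn$ by \eqref{E:3.11eq}--\eqref{E:3.12eq}; hence the generators of $Y$ and of the localized process $Y^R$ coincide on $B(0,2R)$, so the two can be coupled to agree until the first exit from $B(0,2R)$, and since that exit cannot precede the exit from $B(x,\delta R)$ we get $P^x(\tau^Y_{B(x,\delta R)}\le t)=P^x(\tau^{Y^R}_{B(x,\delta R)}\le t)$. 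Next fix a constant $\kappa=\kappa(\delta,\epsilon)\in(0,1]$, to be chosen small, with $\kappa R\ge1$. By \eqref{e:bound} the rate at which $Y^R$ makes jumps longer than $\kappa R$ is at most $(1+2d)\kappa^{-2}R^{-2}$ everywhere, so by Meyer's construction $Y^R$ and $Y^{R,\kappa}$ can be coupled to coincide until the first such jump of $Y^R$, whose occurrence time $T$ obeys $P^x(T\le t)\le(1+2d)\kappa^{-2}R^{-2}t$; consequently
\[
P^x(\tau^{Y^R}_{B(x,\delta R)}\le t)\le P^x(\tau^{Y^{R,\kappa}}_{B(x,\delta R)}\le t)+\frac{(1+2d)t}{\kappa^2R^2}.
\]
Moreover $Y^{R,\kappa}$ has jump rate bounded by $1+2d$ (use \eqref{e:bound0} together with $|x-y|^2\ge1$ for $x\ne y$), hence is conservative, so its transition probabilities $p^{R,\kappa}(s,\cdot,\cdot)$ sum to $1$; it thus remains to bound $P^x(\tau^{Y^{R,\kappa}}_{B(x,\delta R)}\le t)$.

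\textbf{Displacement bound.} The core estimate is: there are $\eta\in(0,1)$ and $c<\infty$, depending on $d,\epsilon,\delta$, such that
\[
\sup_{z\in\Z^d}P^z\bigl(|Y^{R,\kappa}_s-z|>\tfrac{\delta R}{2}\bigr)\le\frac{cs}{R^2},\qquad 0<s\le\eta R^2,\quad R\ge R_0,\quad \kappa R\ge1.
\]
To prove it, sum \eqref{qtesr} over $y$ with $|y-z|>\delta R/2$: with $a:=\frac1{5\kappa R}\log\frac{R^2}s$ and $r:=\delta R/2$, the left side is at most $cR^{-d}(s/R^2)^{-(2+\epsilon)/\epsilon}\sum_{y:\,|y-z|>r}e^{-a|z-y|}\tn^R(y)$. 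Bound $\tn^R(y)\le 1+\hn(y)\IA_{B(0,4R)}(y)$ by \eqref{E:3.12eq}; the contribution of the ``$1$'' is $\le Ca^{-d}e^{-ar/2}$ by an elementary lattice tail estimate, and the contribution of $\hn$ is, by H\"older's inequality with exponents $p$ and $p':=p/(p-1)$ together with the Spatial Ergodic Theorem bound $\sum_{y\in B(0,16R)}\hn(y)^p\le c_0R^d$ of \eqref{e:note01b}, at most $(c_0R^d)^{1/p}\bigl(C(ap')^{-d}e^{-ap'r/2}\bigr)^{1/p'}$. Since $a^{-1}\le 5\kappa R/\log(1/\eta)$ when $s\le\eta R^2$ and $d/p+d/p'=d$, both contributions are $\le CR^de^{-ar/2}$ with $C=C(d,\epsilon,\delta,\kappa,\eta)$, so the whole bound is $\le C'(s/R^2)^{\delta/(20\kappa)-(2+\epsilon)/\epsilon}$, because $e^{-ar/2}=(s/R^2)^{\delta/(20\kappa)}$. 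Choosing $\kappa$ small enough that $\delta/(20\kappa)\ge1+(2+\epsilon)/\epsilon$ and using $s/R^2\le 1$ gives the claim; then shrink $\eta$ so that $c\eta\le1/2$.

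\textbf{From displacements to exit times, and conclusion.} Write $\tau:=\tau^{Y^{R,\kappa}}_{B(x,\delta R)}$. On $\{\tau\le t\}\cap\{|Y^{R,\kappa}_t-x|\le\delta R/2\}$ one has $|Y^{R,\kappa}_t-Y^{R,\kappa}_\tau|>\delta R/2$, so the strong Markov property at $\tau$ yields
\[
P^x(\tau\le t)\le P^x\bigl(|Y^{R,\kappa}_t-x|>\tfrac{\delta R}{2}\bigr)+P^x(\tau\le t)\,\sup_{w\in\Z^d,\ u\le t}P^w\bigl(|Y^{R,\kappa}_u-w|>\tfrac{\delta R}{2}\bigr).
\]
For $t\le\eta R^2$ the displacement bound makes the supremum $\le1/2$ and the first term $\le ct/R^2$, hence $P^x(\tau\le t)\le 2ct/R^2$; for $t>\eta R^2$ inequality \eqref{e0} is trivial since the left side is $\le1\le t/(\eta R^2)$. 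Combining with the two reductions and absorbing the $\kappa$-dependent factors produces \eqref{e0} with a $\delta$-dependent constant. One may take $\kappa(\delta,\epsilon)=c(\epsilon)\delta$ for a small deterministic $c(\epsilon)>0$, so $\kappa R\ge1$ holds once $R\ge\delta^{-1}R_1$ with $R_1:=\max\{R_0,\,c(\epsilon)^{-1}\}$; since $R_0$ of Proposition~\ref{P:Sob} is a.s.\ finite, $\BbbP(1\le R_1<\infty)=1$. (Conservativeness of $Y$, needed for $\tau_{B(x,\delta R)}$ to be meaningful, is Lemma~\ref{thm:conserv}.)

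\textbf{Main obstacle.} The delicate step is the displacement bound. Because $\hn$ (equivalently $\tn^R$) is only $L^p$-integrable with possibly $p<2$, one cannot afford to lose a second power of $\tn^R$ in summing the heat kernel; this is exactly why it is essential that $p^{R,\kappa}(s,z,y)=P^z(Y^{R,\kappa}_s=y)$ already carries a single factor $\tn^R(y)$ and that the decay in \eqref{qtesr} is exponential on the truncation scale $\kappa R$. It is this exponential decay, played against the ergodic-theorem bound through H\"older's inequality—which is what forces $\kappa$ to be small relative to $\delta$—that makes the resulting power of $s/R^2$ large enough to beat the prefactor $(s/R^2)^{-(2+\epsilon)/\epsilon}$ and recover the diffusive bound $ct/R^2$.
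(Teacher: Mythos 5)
Your proof is correct and follows essentially the same route as the paper's: reduce $Y$ to the localized process $Y^R$ on $B(0,2R)$, compare $Y^R$ to the truncated process $Y^{R,\kappa}$ via Meyer's construction (the paper packages this as Lemma~\ref{nl-1}), use the Davies/Carlen--Kusuoka--Stroock off-diagonal bound \eqref{qtesr} with a $\delta$-proportional truncation scale to derive a diffusive displacement estimate, and convert displacements into an exit-time bound by a strong-Markov step (the paper's Lemma~\ref{C:exit}). The only departures are cosmetic: you sum the heat-kernel tail against $\tn^R$ in one shot via H\"older's inequality with exponents $(p,p')$, whereas the paper uses a dyadic annulus decomposition; and your strong-Markov step is a geometric-series iteration, whereas the paper's \eqref{E:3.68eq} is a single decomposition at time $2t$ — both give the same result.
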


The proof is based on a comparison with the corresponding exit problems for the walks~$Y^R$ and~$Y^{R,\kappa}$.  For all $R\ge1$, all $\kappa\in (0,1]$, all $x\in \Z^d$  and all $r\ge1$, let
\begin{equation}
\tau^{R,\kappa}_{B(x,r)}:=\inf\bigl\{t\ge0\colon Y_t^{R,\kappa}\notin B(x,r)\bigr\}.
\end{equation}
We then have:

\begin{lemma}
\label{C:exit}
Suppose Assumptions~$\ref{ass1}$ and $\ref{Asmp2}$ hold and let~$R_0:=R_0(\w)$ be as in Proposition~$\ref{P:Sob}$. There is
$\kappa\in (0,1]$ and, for each~$\delta\in(0,1]$, also~$c>0$ such that for all  $x\in \Z^d$, all $R\ge \max\{16 \delta^{-1}R_0,(\kappa\delta)^{-1}\}$ and all $t>0$,
\begin{equation}
P^x\bigl(\tau_{B(x,\delta R)}^{R,\delta\kappa}\le t\bigr)\le \frac{ct}{R^2}.
\end{equation}
\end{lemma}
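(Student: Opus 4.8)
\emph{Strategy.} I would run the standard ``restart'' (strong Markov) reduction of the exit bound to on-site displacement estimates, feeding in the off-diagonal heat-kernel bound of Proposition~\ref{thm:hitest}. First note that $Y^{R,\delta\kappa}$ is conservative: by \eqref{e:bound0} applied with $\kappa$ replaced by $\delta\kappa\in(0,1]$, together with $|x-y|^2\ge1$ for $x\ne y$, its total rate of genuine jumps at every site is at most $1+2d$, so no explosion occurs. For $u>0$ and $\rho\ge1$ put
\[
q(u,\rho):=\sup_{z\in\Z^d}P^z\bigl(Y^{R,\delta\kappa}_u\notin B(z,\rho)\bigr).
\]
Splitting the event $\{\tau^{R,\delta\kappa}_{B(x,\delta R)}\le t\}$ according to whether $Y^{R,\delta\kappa}_{2t}\in B(x,\delta R/2)$ and applying the strong Markov property at $\tau^{R,\delta\kappa}_{B(x,\delta R)}$ --- on that event the chain sits outside $B(x,\delta R)$ at that time, so reaching $B(x,\delta R/2)$ by time $2t$ requires a displacement larger than $\delta R/2$ --- gives
\[
P^x\bigl(\tau^{R,\delta\kappa}_{B(x,\delta R)}\le t\bigr)\le q\bigl(2t,\tfrac{\delta R}2\bigr)+P^x\bigl(\tau^{R,\delta\kappa}_{B(x,\delta R)}\le t\bigr)\sup_{0<u\le 2t}q\bigl(u,\tfrac{\delta R}2\bigr),
\]
so that $P^x(\tau^{R,\delta\kappa}_{B(x,\delta R)}\le t)\le 2\,q(2t,\delta R/2)$ whenever $\sup_{0<u\le 2t}q(u,\delta R/2)\le\tfrac12$. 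Since the asserted bound holds trivially once $t$ exceeds a ($\delta$-dependent) multiple of $R^2$, it suffices to control $q(u,\delta R/2)$ for $u$ up to a small fixed fraction of $R^2$, say $u\le R^2/4$.

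\emph{Displacement estimate.} For such $u$ I would apply Proposition~\ref{thm:hitest} with truncation parameter $\delta\kappa$ (legitimate since the hypothesis $R\ge\max\{16\delta^{-1}R_0,(\kappa\delta)^{-1}\}$ gives $R\ge R_0$ and $\delta\kappa R\ge1$), obtaining for every $z$
\[
P^z\bigl(Y^{R,\delta\kappa}_u\notin B(z,\tfrac{\delta R}2)\bigr)\le c(\delta)\,R^{-d}\Bigl(\frac u{R^2}\Bigr)^{-\frac{2+\epsilon}\epsilon}\sum_{\begin{subarray}{c}y\in\Z^d\\ |y-z|\ge \delta R/2\end{subarray}}\exp\Bigl(-\frac{|y-z|}{5\delta\kappa R}\log\bigl(\tfrac{R^2}u\bigr)\Bigr)\tn^R(y).
\]
Writing the exponential as a product of two equal halves, one uses $|y-z|\ge\delta R/2$ on the first half to pull out the factor $(u/R^2)^{1/(20\kappa)}$, and on the second half uses $\log(R^2/u)\ge\log 4$ and $\delta\kappa\le1$ to bound its exponent by $-a|y-z|/R$ with $a:=(\log 4)/10>0$. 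It then remains to see that $\sum_{y\in\Z^d}\texte^{-a|y-z|/R}\tn^R(y)\le C R^d$ uniformly in $z$: since $\tn^R\equiv1$ off $B(0,4R)$, the part of the sum over $\{|y|>4R\}$ is at most $\sum_{v\in\Z^d}\texte^{-a|v|/R}\le C(a,d)R^d$, while over $B(0,4R)$ one has $\tn^R(y)\le1+\hn(y)$ and $\sum_{y\in B(0,4R)}(1+\hn(y))\le C R^d$ for $R\ge R_0$ by the Spatial Ergodic Theorem \eqref{e:note01b} and the power-mean inequality (valid as $p\ge1$). Altogether $q(u,\delta R/2)\le c'(\delta)\,(u/R^2)^{\frac1{20\kappa}-\frac{2+\epsilon}\epsilon}$ for all $u\le R^2/4$.

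\emph{Closing the argument.} I would now fix $\kappa\in(0,1]$ --- depending only on $\epsilon$, hence only on $d$ and the moment exponents of Assumption~\ref{Asmp2} --- so small that $\tfrac1{20\kappa}-\tfrac{2+\epsilon}\epsilon\ge1$. The key point is that this smallness is \emph{independent of $\delta$}: the ratio of the ball radius $\delta R$ to the truncation length $\delta\kappa R$ equals $1/\kappa$, with the $\delta$'s cancelling, which is exactly what permits a single universal $\kappa$ in the statement. With this $\kappa$, and since $u/R^2\le1$, the last display upgrades to $q(u,\delta R/2)\le c'(\delta)\,u/R^2$ for $u\le R^2/4$; restricting $u$ further to $u\le u_\star:=R^2/\bigl(4\max\{2,c'(\delta)\}\bigr)$ makes $q(u,\delta R/2)\le\tfrac12$ as well. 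Plugging this into the restart inequality for $t\le u_\star/2$ yields $P^x(\tau^{R,\delta\kappa}_{B(x,\delta R)}\le t)\le 4c'(\delta)\,t/R^2$, and for $t>u_\star/2$ the bound $P^x(\cdot)\le1\le 4\max\{2,c'(\delta)\}\,t/R^2$ is immediate; taking $c:=8\max\{2,c'(\delta)\}$ finishes the proof.

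\emph{Main obstacle.} I expect the crux to be the displacement step: arranging that the on-diagonal blow-up $(u/R^2)^{-(2+\epsilon)/\epsilon}$ from the heat-kernel prefactor and the spatial-decay gain $(u/R^2)^{1/(20\kappa)}$ combine into a genuinely \emph{linear} bound in $u/R^2$, while simultaneously leaving enough exponential decay to sum $\tn^R(\cdot)$ against uniformly over an arbitrary center $z$ (here the facts that $\tn^R\equiv1$ far from the origin and that \eqref{e:note01b} controls it near the origin are what keep the sum $O(R^d)$). Choosing $\kappa$ small is precisely what buys the first of these, and verifying that the required smallness does not depend on $\delta$ is the observation that makes the statement come out as phrased.
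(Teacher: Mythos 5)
Your proposal is correct and follows essentially the same route as the paper's proof: apply the off-diagonal bound of Proposition~\ref{thm:hitest} with truncation $\delta\kappa$, sum the exponential tail against $\tn^R$ using the ergodic-theorem control \eqref{e:note01b} (the $\delta$'s cancel in the ratio of ball radius to truncation length, which is exactly why a single $\kappa$ with $\tfrac1{20\kappa}-\tfrac{2+\epsilon}{\epsilon}\ge1$ suffices), and convert the resulting displacement estimate into an exit-time bound via the strong Markov property at the exit time. The only cosmetic differences from \eqref{E:3.66eq}--\eqref{E:3.68eq} are that you retain the $P^x(\tau\le t)$ factor in the restart inequality and solve for it (requiring $\sup_u q\le\tfrac12$) rather than bounding it by one, and you split the exponential into two equal halves rather than summing over annuli; both work and give the same result.
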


\begin{proof}
Let~$\epsilon\in(0,\frac4{d-2})$ and~$R_0$ be as in Proposition~\ref{P:Sob} and let~$\kappa\in(0,1]$ be such that
\begin{equation}
\label{E:3.65eq}
\frac1{20\kappa}-\frac{2+\epsilon}\epsilon\ge1.
\end{equation}
Fix~$\delta\in(0,1]$. Since \eqref{qtesr} applies with~$\kappa$ replaced by~$\kappa\delta$ for all $R\ge R_0$ satisfying~$\kappa\delta R\ge1$, all $0<t\le R^2$ and all~$x\in\Z^d$, we get
\begin{equation}
\label{E:3.66eq}
\begin{aligned}
P^x\bigl(&|Y^{R,{ \delta }\kappa}_t-x|\ge  \tfrac12 \delta R\bigr)
\\
&\le c_0 \left(\frac{t}{R^2}\right)^{-\frac{2+\epsilon}\epsilon} R^{-d} \sum_{\begin{subarray}{c}y\in \Z^d\\|y-x|\ge  \frac12 \delta R\end{subarray}} \exp\left(-\frac{|x-y|}{5 \delta \kappa R}\log \left(\frac{R^2}{t}\right)\right)\tn^{R}(y)\\
&=c_0\left(\frac{t}{R^2}\right)^{-\frac{2+\epsilon}\epsilon} R^{-d}\sum_{n=1}^\infty\sum_{\begin{subarray}{c}y\in \Z^d\\
n\le 2\frac{|y-x|}{ \delta  R}< n+1\end{subarray}}
\exp\left(-\frac{|x-y|}{5 \delta \kappa R}\log \left(\frac{R^2}{t}\right)\right)\tn^{R}(y),
\end{aligned}
\end{equation}
where $c_0$ depends on $\kappa$ and $\delta$.
Assuming in addition that $t\le R^2/3$ (which ensures $\log(R^2/t)\ge1$) and
noting that the condition $ \delta R\ge 16R_0(\omega)$
enables us to apply \eqref{e:note01b} and
\eqref{e:note01a}, the two sums on the right are now bounded by
\begin{equation}
\label{E:3.67eq}
\begin{aligned}
\sum_{n=1}^\infty \exp\biggl(-\frac{n}{10\kappa}\log \Bigl(\frac{R^2}{t}\Bigr)\biggr)&\biggl(\,\sum_{y\in B(x,\frac12(n+1)\delta R)}\tn^R(y)\biggr)
\\
&\le c_1( \delta  R)^d\sum_{n=1}^\infty n^d\exp\left(-\frac{n}{10\kappa}\log \left(\frac{R^2}{t}\right)\right)
\qquad\qquad\quad
\\
&\le c_2 R^d \exp\left(-\frac{1}{ 20 \kappa}\log \left(\frac{R^2}{t}\right)\right)
=c_2R^d\left(\frac{t}{R^2}\right)^{\frac1{20\kappa}}
\end{aligned}
\end{equation}
 for some~$c_2$ depending on~$\kappa$ and $\delta$. Combining \twoeqref{E:3.66eq}{E:3.67eq}, from \eqref{E:3.65eq} we obtain
\begin{equation}
\label{E:3.70eq}
P^x\bigl(|Y^{R,{ \delta }\kappa}_t-x|\ge \tfrac12\delta R\bigr)\le  c_3\frac{t}{R^2}.
\end{equation}
for all~$x\in\Z^d$, all~$R\ge R_0$ with $\kappa\delta R\ge1$, $\delta R\ge 16R_0(\omega)$  and all~$t$  with~$0<t<R^2/3$.

The strong Markov property  at the first exit time from $B(x, {\delta} R)$  shows
\begin{equation}
\label{E:3.68eq}
\begin{aligned}
P^x\bigl(\tau_{B(x, {\delta} R)}^{R,{\delta}\kappa}\le t\bigr)
&\le P^x\bigl(|Y^{R,{\delta}\kappa}_{2t}-x|\ge  \tfrac12{\delta}R\bigr)
+P^x\Bigl(|Y^{R,{\delta}\kappa}_{2t}-x|\le \tfrac12{\delta} R,\tau_{B(x,{\delta}R)}^{R,{\delta}\kappa}\le t\Bigr)
\\
&\le P^x\bigl(|Y^{R,{\delta}\kappa}_{2t}-x|\ge \tfrac12{\delta}R\bigr)+\sup_{z\in \Z^d}\sup_{0<s\le 2 t}P^z\bigl(|Y^{R,{\delta}\kappa}_{2t-s}-z|\ge  \tfrac12 {\delta}R\bigr).
\end{aligned}
\end{equation}
 Invoking \eqref{E:3.70eq}, we get the claim for all~$t<R^2/6$. Adjusting the constant~$c$ if necessary, the claim holds trivially for~$t\ge R^2/6$.
\end{proof}

 Next we set
\begin{equation}
\tau^R_A:=\inf\{t\ge0\colon Y^R_t \notin A\}.
\end{equation}
Then we get the following deterministic estimate:

\begin{lemma}
\label{nl-1}
 There is $c>0$ such that all $t>0$, all $\kappa\in (0,1]$,  all~$R\ge1$ with $\kappa R\ge1$, all $r\ge1$ and all $x\in \Z^d$,
\begin{equation}\label{nl1-2}
\Bigl|P^x\bigl(\tau^R_{B(x,r)}\le
t\bigr)-P^x\bigl(\tau^{R,\kappa}_{B(x,r)}\le t\bigr)\Bigr|
\le ct\sup_{y\in B(x,r)}\frac{1}{\tn^R(y)}\sum_{\begin{subarray}{c}z\in \Z^d\\|y-z|>\kappa R\end{subarray}} \tC^R_{y,z}.
\end{equation}
\end{lemma}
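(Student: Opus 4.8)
The plan is to realize $Y^R$ and $Y^{R,\kappa}$ on a common probability space by \emph{Meyer's construction} and to bound the left-hand side of \eqref{nl1-2} by the probability that $Y^R$ makes one of its ``large'' jumps before time~$t$ and before leaving~$B(x,r)$. Write $\cmss L^R$, resp.\ $\cmss L^{R,\kappa}$, for the generators of $Y^R$, resp.\ $Y^{R,\kappa}$, so that $\cmss L^R=\cmss L^{R,\kappa}+\cmss J^{R,\kappa}$ with
\[
(\cmss J^{R,\kappa}f)(y):=\frac1{\tn^R(y)}\sum_{z\,:\,|y-z|>\kappa R}\tC^R_{y,z}\bigl[f(z)-f(y)\bigr]
\]
the ``large-jump'' part, whose total intensity at~$y$ is $\varrho_R(y):=\tn^R(y)^{-1}\sum_{z:|y-z|>\kappa R}\tC^R_{y,z}$. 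By \eqref{e:bound0} the total jump rate of $Y^{R,\kappa}$ is at most $1+2d$, so $Y^{R,\kappa}$ is a conservative pure-jump chain; and $\varrho_R$ is finite on~$B(0,4R)$ by \eqref{e:bound}, while on the finite set~$B(x,r)$ it is trivially bounded. First I would invoke Meyer's construction (see \cite{CKW}) to couple $Y^R$ and $Y^{R,\kappa}$, both started at~$x$, together with a random time~$T>0$ so that $Y^R_s=Y^{R,\kappa}_s$ for all $s\in[0,T)$ while, conditionally on the whole path $\{Y^{R,\kappa}_s:s\ge0\}$,
\[
P^x\bigl(T>u\bigm|Y^{R,\kappa}\bigr)=\exp\Bigl(-\int_0^u\varrho_R(Y^{R,\kappa}_s)\,\textd s\Bigr),\qquad u\ge0,
\]
i.e.\ $T$ is the first point of a Cox process directed by the intensity $s\mapsto\varrho_R(Y^{R,\kappa}_s)$ (at time~$T$ the process~$Y^R$ performs an independent large jump that $Y^{R,\kappa}$ ignores).

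Next I would reduce \eqref{nl1-2} to an estimate for $P^x(T\le t\wedge\tau^{R,\kappa}_{B(x,r)})$. Since the paths agree on $[0,T)$: on $\{T>\tau^{R,\kappa}_{B(x,r)}\}$ the exit times of $Y^R$ and $Y^{R,\kappa}$ from~$B(x,r)$ coincide, and on $\{T>t\}$ the events $\{\tau^R_{B(x,r)}\le t\}$ and $\{\tau^{R,\kappa}_{B(x,r)}\le t\}$ coincide. Hence, up to a $P^x$-null set, the symmetric difference of these two events lies in $\{T\le t\}\cap\{T\le\tau^{R,\kappa}_{B(x,r)}\}$, and therefore
\[
\Bigl|P^x\bigl(\tau^R_{B(x,r)}\le t\bigr)-P^x\bigl(\tau^{R,\kappa}_{B(x,r)}\le t\bigr)\Bigr|\le P^x\bigl(T\le t\wedge\tau^{R,\kappa}_{B(x,r)}\bigr).
\]
For $s<\tau^{R,\kappa}_{B(x,r)}$ one has $Y^{R,\kappa}_s\in B(x,r)$, so the directing intensity at such~$s$ is at most $\varrho^*:=\sup_{y\in B(x,r)}\varrho_R(y)$; since $t\wedge\tau^{R,\kappa}_{B(x,r)}$ is a measurable functional of the $Y^{R,\kappa}$-path, conditioning on that path gives
\[
P^x\bigl(T\le t\wedge\tau^{R,\kappa}_{B(x,r)}\bigm|Y^{R,\kappa}\bigr)=1-\exp\Bigl(-\int_0^{t\wedge\tau^{R,\kappa}_{B(x,r)}}\varrho_R(Y^{R,\kappa}_s)\,\textd s\Bigr)\le1-\texte^{-\varrho^* t}\le\varrho^* t,
\]
so taking expectations yields \eqref{nl1-2} with $c=1$ (hence with any $c\ge1$).

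The only real obstacle is the rigorous justification of Meyer's construction here, i.e.\ that superposing on $Y^{R,\kappa}$ the independent large-jump mechanism of intensity~$\varrho_R$ has the law of~$Y^R$. This is standard: $Y^{R,\kappa}$ is a nice conservative chain by \eqref{e:bound0}, and one needs no global bound on~$\varrho_R$ --- it suffices to run the construction for the processes killed upon leaving~$B(x,r)$, on which all jump rates are bounded (by \eqref{e:bound0}, \eqref{e:bound} and finiteness of~$B(x,r)$), and this is all the comparison of $\tau^R_{B(x,r)}$ with $\tau^{R,\kappa}_{B(x,r)}$ uses. The dependence on $\kappa$, $R$, $r$, $x$ enters only through $\varrho^*$, which already sits on the right-hand side of \eqref{nl1-2}.
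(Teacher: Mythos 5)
Your proposal is correct and follows exactly the route the paper indicates: the paper's proof of this lemma consists of a single sentence deferring to the argument of \cite[Lemma~3.1]{CKW}, which is precisely Meyer's construction of $Y^R$ as $Y^{R,\kappa}$ superposed with an independent large-jump mechanism of intensity $\varrho_R$, followed by the same reduction of the symmetric difference of the two exit events to $\{T\le t\wedge\tau^{R,\kappa}_{B(x,r)}\}$ and the bound $1-\texte^{-\varrho^* t}\le\varrho^* t$. Your added remark that localizing to the killed processes on the finite ball $B(x,r)$ suffices to justify the coupling (so no global bound on $\varrho_R$ is needed) is exactly the right way to make this rigorous.
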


\begin{proof}
 This is  proved by following the argument of  \cite[Lemma~3.1]{CKW},  which is itself based on the
Meyer's construction of $Y^R$ (see \cite[Section~3.1]{BGK}).
\end{proof}

We are ready to give:

\begin{proofsect}{Proof of Proposition~$\ref{P:ex}$}
Fix $\kappa\in (0,1]$ in $(\wt D^{R,\kappa},\wt\Fs^R)$ to the constant in Lemma~\ref{C:exit}.
Since the processes~$Y$ and~$Y^R$ ``see'' the same conductances  in~$B(0,2R)$,  for all~$R\ge r\ge1$, all~$t>0$ and all $x\in B(0,R)$ we have
\begin{equation}
P^x\bigl(\tau_{B(x,r)}\le
t\bigr)=P^x\bigl(\tau^R_{B(x,r)}\le
t\bigr).
\end{equation}
Lemma~\ref{nl-1} along with \eqref{e:bound} then show
\begin{equation}
\Bigl|P^x\bigl(
\tau_{B(x,r)}
\le t\bigr)-P^x\bigl(\tau^{R,\delta\kappa}_{B(x,r)}\le t\bigr)\Bigr|\le \frac{ct}{R^2}
\end{equation}
for all~$R\ge r\ge1$ with $\delta\kappa R\ge1$, all $t>0$ and all~$x\in\Z^d$, where~$c$ depends on the constant from \eqref{nl1-2}, $\kappa$ and $\delta$.  Setting $r:=\delta R$, Lemma~\ref{C:exit} gives the claim with  $R_1:=16
 R_0/\kappa$.
\end{proofsect}

 Let $p^{B(0,R)}(t,x,y)$ denote the  (substochastic) transition probabilities of the process $Y$ killed upon exiting the ball $B(0,R)$. The conclusions for the heat-kernel associated with the truncated, localized process $Y^{R,\kappa}$ can then be transferred to~$Y$:

\begin{proposition}\label{P:heat}
Under Assumptions~$\ref{ass1}$ and $\ref{Asmp2}$, and with~$\epsilon\in(0,\frac4{d-2})$  and $R_0$  as in Proposition~$\ref{P:Sob}$, there is a constant $c>0$
such that for all $R\ge R_0$, all $x,y\in B(0,R)$ and all $0<t\le R^2$,
\begin{equation}
p^{B(0,R)}(t,x,y)\le c\,R^{-d}\left(\frac{t}{R^{2}}\right)^{-\frac{2+\epsilon}\epsilon}\,\hn(y).
\end{equation}
\end{proposition}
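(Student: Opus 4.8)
The plan is to reduce everything to the \emph{localized} chain $Y^R$ and then extract a diagonal heat-kernel bound for $Y^R$ directly from a Nash inequality, bypassing any comparison with the truncated chain $Y^{R,\kappa}$. First I would note that, by the definitions \eqref{E:3.11eq}--\eqref{E:3.12eq}, every $x\in B(0,R)\subseteq B(0,2R)$ satisfies $\tC^R_{x,\cdot}=C_{x,\cdot}$ and $\tn^R(x)=\hn(x)$, so the jump rates of $Y$ and of $Y^R$ out of any point of $B(0,R)$ coincide. Hence $Y$ and $Y^R$ started in $B(0,R)$ have the same law up to their first exit from $B(0,R)$, which gives $p^{B(0,R)}(t,x,y)=p^{B(0,R)}_{Y^R}(t,x,y)$ for all $x,y\in B(0,R)$, where $p^{B(0,R)}_{Y^R}$ denotes the transition probability of $Y^R$ killed on exiting $B(0,R)$. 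Since killing only decreases the kernel, it then suffices to bound the (unkilled) transition probability $p^R(t,x,y)$ of $Y^R$.

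For the latter the key remark is the monotonicity $\wt D^{R,\kappa}(f,f)\le\wt D^R(f,f)$, valid for every $\kappa\in(0,1]$ because $\wt D^R$ is obtained from $\wt D^{R,\kappa}$ by restoring nonnegative terms. Thus Proposition~\ref{P:Sob}, used with $\kappa:=1$ (so that $\kappa R=R\ge R_0\ge1$), yields the Sobolev inequality \eqref{e:Sob} verbatim with $\wt D^{R,1}(f,f)$ replaced by $\wt D^R(f,f)$. From here I would simply re-run the proof of Lemma~\ref{C:Sob} line by line: H\"older's inequality turns this Sobolev inequality into a Nash inequality for the Dirichlet form $(\wt D^R,\tn^R)$, and the Carlen--Kusuoka--Stroock equivalence \cite[Theorem~(2.1)]{CKS}, applied with $n:=2\frac{2+\epsilon}{\epsilon}$, $\delta:=R^{-2}$ and $A:=c_1R^{2-\frac{d\epsilon}{2+\epsilon}}$, produces a constant $c$ with $p^R(t,x,y)\le cR^{-d}(t/R^2)^{-(2+\epsilon)/\epsilon}\,\texte^{tR^{-2}/2}\,\tn^R(y)$ for all $t>0$ and all $x,y\in\Z^d$.

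It then remains to assemble: for $y\in B(0,R)$ we have $\tn^R(y)=\hn(y)$, and for $0<t\le R^2$ the factor $\texte^{tR^{-2}/2}\le\texte^{1/2}$ is absorbed into the constant, so $p^{B(0,R)}(t,x,y)=p^{B(0,R)}_{Y^R}(t,x,y)\le p^R(t,x,y)\le c'R^{-d}(t/R^2)^{-(2+\epsilon)/\epsilon}\hn(y)$, which is the assertion. I do not anticipate a genuine obstacle here. The one point worth stressing --- and the reason the localization apparatus is needed at all --- is that $Y$ itself carries no global Sobolev inequality; one must route through $Y^R$, which does, and exploit that on $B(0,R)$ it is indistinguishable from $Y$. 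The $\kappa$-truncation, by contrast, plays no role in this argument beyond having been a device inside the proof of Proposition~\ref{P:Sob}: thanks to $\wt D^{R,\kappa}\le\wt D^R$ its use is free, and in particular no (lossy) Meyer-type comparison between $Y$ and $Y^{R,\kappa}$ is required.
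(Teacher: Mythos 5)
Your proposal is correct and follows essentially the same route as the paper's own proof: reduce to the killed process for $Y^R$ (using that $Y$ and $Y^R$ have the same jump rates out of every point of $B(0,R)$), bound the killed kernel by the unkilled $p^R$, transfer the Nash inequality from $\wt D^{R,\kappa}$ to $\wt D^R$ via the monotonicity $\wt D^{R,\kappa}\le\wt D^R$, and apply the Carlen--Kusuoka--Stroock machinery as in Lemma~\ref{C:Sob}, finishing with $\tn^R=\hn$ on $B(0,R)$. The only cosmetic difference is your explicit choice $\kappa:=1$, which the paper leaves implicit.
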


\begin{proofsect}{Proof}
Denote by $p^{R, B(0,R)}$ the transition probabilities of the process $Y^R$ killed upon exiting the ball $B(0,R)$. Then for all $x,y\in B(0,R)$ and all $t>0$,
\begin{equation}
p^{B(0,R)}(t,x,y)=p^{R,B(0,R)}(t,x,y).
\end{equation}
 Since, trivially,
\begin{equation}
p^{R,B(0,R)}(t,x,y)\le p^{R}(t,x,y)
\end{equation}
 it suffices to prove the desired bound for the transition probabilities $p^R(t,x,y)$ of the process~$Y^R$. Here we note that the associated Dirichlet forms obey  $\wt D^{R,\kappa}(f,f)\le \wt D^{R}(f,f)$ and so the Nash inequality \eqref{e:Nash} applies for the Dirichlet form $(\wt D^{R}, \wt
\Fs^R)$ as well. Since $\tn^R=\nu$ on~$B(0,R)$, the argument from the proof of Lemma~\ref{C:Sob} then gives the claim.
\end{proofsect}

\section{Proof of Quenched Invariance Principle}
\noindent
Having established the needed bounds on the transition probabilities and exit times, we proceed to the proof the quenched invariance principle.

\subsection{Tightness}
\label{3-4}\noindent
We start with the proof of tightness of diffusively-scaled process~$Y$. Our aim is to apply the criterion for tightness from Aldous~\cite{Aldous}. Unfortunately, this result if not formulated for the space $\CC([0,T])$ but rather for the Skorohod space $\DD([0,T])$ of functions $f\colon [0,T]\to \R^d$ that are right continuous on $[0,T)$ and have left limits on $(0,T]$. This space can be endowed with the standard Skorohod topology (see, e.g., Billingsley~\cite{Billingsley}) that makes it a Polish space which in turn permits considerations of weak limits of probability measures.

A $d$-dimensional version of Aldous~\cite[Theorem~1]{Aldous} then implies that the sequence $\{Y^{(n)}\colon n\ge1\}$ of processes is tight in~$\DD([0,T])$ when the following two conditions are met:
\begin{enumerate}
\item[(1)] $\{Y_t^{(n)}\colon n\ge1\}$ is tight, as $\R^d$-valued random variables, for each~$t\in[0,T]$, and
\item[(2)] for any
sequence $\{\tau_n:n\ge1\}$, where $\tau_n$ is for each $n\ge1$ a stopping time for
the natural filtration of~$Y^{(n)}$, and any~$\delta_n>0$ with $\delta_n\to0$,
\begin{equation}
Y^{(n)}_{(\tau_n+\delta_n)\wedge T}-Y^{(n)}_{\tau_n\wedge T}\,\underset{n\to\infty}\longrightarrow\,0
\end{equation}
in probability.
\end{enumerate}
We will apply this to the choice
\begin{equation}\label{E-Yn}
Y_t^{(n)}:=\frac1{\sqrt n}\,Y_{nt},\qquad t\ge0,
\end{equation}
to get:

\begin{proposition}\label{thm:tightthm}
Let $d\ge 2$. For each $T>0$ and a.e.\ realization of the conductances, the laws of $\{Y^{(n)}
\colon n\ge1\}$ induced by~$P^0$ on $\DD([0,T])$ are tight.
\end{proposition}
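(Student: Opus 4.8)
The plan is to verify the two conditions of the $d$-dimensional Aldous criterion recalled above; the sole analytic input is the exit-time estimate of Proposition~\ref{P:ex}, together with the conservativeness of~$Y$ (Lemma~\ref{thm:conserv}), which ensures that each $Y^{(n)}_t$ is a genuine $\R^d$-valued random variable. We work with a fixed~$\omega$ for which $R_1:=R_1(\omega)<\infty$. Since $Y^{(n)}_t=Y_{nt}/\sqrt n$, the natural filtration of~$Y^{(n)}$ at time~$t$ equals $\GG_{nt}$, where $\{\GG_u\colon u\ge0\}$ denotes the natural filtration of~$Y$; hence, if $\tau_n$ is a stopping time for~$Y^{(n)}$, then $n(\tau_n\wedge T)=(n\tau_n)\wedge(nT)$ is a $\{\GG_u\}$-stopping time. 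Finally, since a Euclidean ball of radius~$r$ contains the centred $\ell^\infty$-box $B(0,r/\sqrt d)$, exits of~$Y$ from Euclidean balls are controlled by Proposition~\ref{P:ex} up to a harmless dimensional factor.

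First I would record a maximal bound: applying Proposition~\ref{P:ex} with $\delta=1/\sqrt d$ and $R=A\sqrt n$, there is $c\in(0,\infty)$ such that for every $A\ge1$ and every~$n$ with $A\sqrt n\ge\sqrt d\,R_1$,
\begin{equation}
\label{E:tightmax}
P^0\Bigl(\sup_{0\le s\le nT}|Y_s|>A\sqrt n\Bigr)\le P^0\bigl(\tau_{B(0,A\sqrt n/\sqrt d)}\le nT\bigr)\le\frac{c\,nT}{(A\sqrt n)^2}=\frac{cT}{A^2}.
\end{equation}
Taking the time to be a fixed $t\in[0,T]$ gives $P^0(|Y^{(n)}_t|>A)\le cT/A^2$ for all such~$n$; since only finitely many~$n$ violate $A\sqrt n\ge\sqrt d\,R_1$, and for those $Y^{(n)}_t$ is a.s.\ finite, enlarging the radius shows that $\{Y^{(n)}_t\colon n\ge1\}$ is tight in~$\R^d$ for each $t\in[0,T]$. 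This is condition~(1).

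For condition~(2), fix $\eta>0$ and $A\ge\max\{1,\eta\}$. The increment exceeding~$\eta$ forces either $|Y^{(n)}_{\tau_n\wedge T}|>A$ --- an event of probability at most $cT/A^2$ by \eqref{E:tightmax}, since $n(\tau_n\wedge T)\le nT$ --- or the complementary event $\{|Y_{n(\tau_n\wedge T)}|\le A\sqrt n\}$. On the latter, write $x:=Y_{n(\tau_n\wedge T)}$ and note that the elapsed time $n((\tau_n+\delta_n)\wedge T)-n(\tau_n\wedge T)$ is $\GG_{n(\tau_n\wedge T)}$-measurable and lies in $[0,n\delta_n]$; the strong Markov property at $n(\tau_n\wedge T)$ then yields, $P^0$-a.s.\ on that event,
\begin{equation}
P^0\Bigl(\bigl|Y^{(n)}_{(\tau_n+\delta_n)\wedge T}-Y^{(n)}_{\tau_n\wedge T}\bigr|>\eta\,\Big|\,\GG_{n(\tau_n\wedge T)}\Bigr)\le\sup_{|x|\le A\sqrt n}P^x\bigl(\tau_{B(x,\eta\sqrt n/\sqrt d)}\le n\delta_n\bigr).
\end{equation}
Applying Proposition~\ref{P:ex} with $R=A\sqrt n$ and $\delta=\eta/(A\sqrt d)\in(0,1]$ --- admissible for $|x|\le A\sqrt n$ once $\sqrt n\ge\sqrt d\,R_1/\eta$ --- the right-hand side is at most $c_{\eta,A}\,n\delta_n/(A\sqrt n)^2=c_{\eta,A}\,\delta_n/A^2$, where $c_{\eta,A}$ depends only on $\eta$, $A$ and~$d$. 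Combining the two cases, for all~$n$ with $\sqrt n\ge\sqrt d\,R_1\max\{1,\eta^{-1}\}$,
\begin{equation}
P^0\Bigl(\bigl|Y^{(n)}_{(\tau_n+\delta_n)\wedge T}-Y^{(n)}_{\tau_n\wedge T}\bigr|>\eta\Bigr)\le\frac{cT}{A^2}+\frac{c_{\eta,A}\,\delta_n}{A^2}.
\end{equation}
Since $\delta_n\to0$, letting $n\to\infty$ gives $\limsup_{n\to\infty}P^0(\,\cdot\,)\le cT/A^2$, and since~$A$ was arbitrary the left-hand side tends to~$0$. This establishes condition~(2), and the tightness of $\{Y^{(n)}\colon n\ge1\}$ in $\DD([0,T])$ follows from the Aldous criterion.

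The one point requiring care is that the displacement accumulated after~$\tau_n$ emanates from the \emph{random} vertex $Y_{n(\tau_n\wedge T)}$, which falls outside the scope of Proposition~\ref{P:ex} when it lies far from the origin; this is precisely why the maximal estimate \eqref{E:tightmax} must be established first, so that one may restrict to $\{|Y_{n(\tau_n\wedge T)}|\le A\sqrt n\}$ and then invoke the exit bound on $B(0,A\sqrt n)$ with the reduced ratio $\delta=\eta/(A\sqrt d)$. Everything else is routine bookkeeping of the length scales $A\sqrt n$ and $\eta\sqrt n$ and of the order of the limits in~$n$ and~$A$.
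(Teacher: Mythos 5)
Your proof is correct and follows essentially the same route as the paper's: condition (1) from Proposition~\ref{P:ex} at scale $R=A\sqrt n$ with $x=0$, and condition (2) by splitting on whether $Y_{n(\tau_n\wedge T)}$ lies inside a big ball of radius $\asymp A\sqrt n$ (controlled again by the maximal estimate) and, on that event, applying the strong Markov property plus Proposition~\ref{P:ex} once more with $\delta$ rescaled to~$\eta/A$. The only real difference from the paper's write-up is that you are more explicit about translating between the Euclidean norm $|\cdot|$ and the $\ell^\infty$ boxes $B(x,R)$ used in Proposition~\ref{P:ex} (the paper elides the $1/\sqrt d$ factors), and you spell out the measurability of the shifted time and the admissibility constraints on $n$; none of this changes the substance.
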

\begin{proofsect}{Proof}
Let $R_1=R_1(\omega)$ be as in Proposition \ref{P:ex}.
We will check that the above conditions (1-2) from Aldous~\cite{Aldous} hold on the set $\{R_1<\infty\}$.
To distinguish various processes, let us write $\tau_B(X)$ for the first exit time of the process~$X$ from set~$B$.

For (1) we note that, by \eqref{e0} in Proposition \ref{P:ex}, when $r\sqrt n\ge  R_1$,
\begin{equation}
\label{E:3.62}
P^0\bigl(|Y^{(n)}_t|>r\bigr)
\le
P^0\bigl(\tau_{B(0,r)}(Y^{(n)})\le t\bigr)
=P^{0}\bigl(\tau_{B(0,r\sqrt n)}(Y)\le nt\bigr)
\le c_1 t/r^2
\end{equation}
This implies condition~(1) above on~$\{R_1<\infty\}$.

Next, pick $T>0$ and $\eta>0$, let $\tau_n$ be stopping times
bounded by $T$ and choose $\delta_n$ with $\delta_n\downarrow 0$. For any~$r>0$, the strong Markov property gives
\begin{multline}
\quad
P^0\bigl( |{Y}^{(n)}_{\tau_n+\delta_n}-{Y}^{(n)}_{\tau_n}|>\eta\bigr)
\\
\le P^0\bigl(\tau_{B(0,r)}(Y^{(n)})\le T\bigr)+\max_{z\in B(0,r\sqrt n)}P^z\bigl( |{Y}^{(n)}_{\delta_n}-z/\sqrt n|>\eta\bigr).
\quad
\end{multline}
Using \eqref{E:3.62}, the first quantity on the right is at most $c_1T/r^2$ whenever $r\sqrt n\ge
 R_1$.  For the second quantity Proposition \ref{P:ex} with $\delta:=\eta/r$ gives
\begin{equation}
\max_{z\in B(0,r\sqrt n)}P^z\bigl( |{Y}^{(n)}_{\delta_n}-z/\sqrt n|>\eta\bigr)
\le \max_{z\in B(0,r\sqrt n)}P^z\bigl(\tau_{B(z,\eta\sqrt n)}(Y)<n\delta_n\bigr)
\le c_2\delta_n/r^2
\end{equation}
for  $\min\{r\sqrt n,\eta\sqrt n\} \ge  R_1$.  While~$c_2$ depends on the ratio~$\eta/r$, for~$\eta$ and~$r$ fixed the right-hand side tends to zero in light of~$\delta_n\to0$. Thus we get
\begin{equation}
\limsup_{n\to\infty}\,P^0\bigl( |{Y}^{(n)}_{\tau_n+\delta_n}-{Y}^{(n)}_{\tau_n}|>\eta\bigr)\le c_3\,T/{r^2}\qquad \text{on }\{R_1<\infty\}
\end{equation}
for some constant~$c_3\in(0,\infty)$ regardless of $\eta$ or the choice of stopping times $\tau_n$ (as long as $\tau_n\le T$). But the left-hand side does not depend on~$r$ and so taking $r\to\infty$, we obtain condition~(2) above on~$\{R_1<\infty\}$.
Aldous~\cite[Theorem~1]{Aldous} then implies tightness of the processes $\{Y^{(n)}:n\ge1\}$ on~$\DD([0,T])$.
\end{proofsect}

\subsection{Proof of a QIP}\label{3-5}
Having proved tightness, our proof of a QIP is now reduced to the convergence of finite-dimensional distributions. Let $\{\wt B_t\colon t\ge0\}$ denote a $d$-dimensional Brownian motion such that
\begin{equation}
\E(\wt B_t)=0\quad\text{and}\quad
\E\bigl((v\cdot \wt B_t)^2\bigr)=t\,\,\frac{\E\pi(0)}{\E\hn(0)}\,v\cdot\Sigma v,\qquad v\in\R^d,\,t\ge0,
\end{equation}
where~$\Sigma$ is the matrix with entries as in~\eqref{E:sigma}. Then we have:

\begin{proposition}
\label{prop-FDD}
Let~$d\ge2$ and consider the processes $\{Y^{(n)}:n\ge1\}$ from \eqref{E-Yn} with law~$P^0$. Then the following holds on a set of conductances of full $\BbbP$-measure: For each $k\ge1$ and each $t_1,\dots,t_k$ satisfying $0\le t_1<t_2<\dots<t_k<\infty$,
\begin{equation}
\label{E:3.60a}
\bigl(Y^{(n)}_{t_1},\dots, Y^{(n)}_{t_k}\bigr)\,\underset{n\to\infty}{\overset{\text{\rm law}}\longrightarrow}\,
(\wt B_{t_1},\dots,\wt B_{t_k}).
\end{equation}
\end{proposition}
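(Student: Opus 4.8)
The plan is to establish \eqref{E:3.60a} by the standard corrector route, but — as emphasized in the introduction — using only sublinearity on average rather than everywhere sublinearity, since here we only need a CLT (not an invariance principle with everywhere-sublinear corrector). The starting point is the corrector $\chi$ introduced in Section~\ref{sec2}, which exists under Assumption~\ref{ass1} together with \eqref{E:1.3} (and \eqref{E:1.3} holds here by Jensen from \eqref{E:1.8*}, \eqref{E:2.1}). Recall $\Psi(x)=x+\chi(x)$ is $\cmss L$-harmonic, so $\Psi(Y_t)$ — or more precisely $\hate_i\cdot\Psi(Y_t)$ for each coordinate $i$ — is a martingale under $P^0$ for a.e.\ environment, since the generator $\cmss L_Y$ kills $\Psi$ as well (the time change by $\hn$ does not affect harmonicity). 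Its quadratic variation is an additive functional whose increments are $\sum_{y}\frac{C_{Y_t,y}}{\hn(Y_t)}\,\bigl(\hate_i\cdot(\Psi(y)-\Psi(Y_t))\bigr)\bigl(\hate_j\cdot(\Psi(y)-\Psi(Y_t))\bigr)$.

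The key steps, in order, are: (i) Invoke the pointwise ergodic theorem for the environment process associated with $Y$, which is stationary and ergodic with respect to $\Q_Y$, to show that the rescaled quadratic variation of the martingale $M^{(n)}_t:=\tfrac1{\sqrt n}\Psi(Y_{nt})$ converges a.s.\ (in $t$, locally uniformly) to $t\,\frac{\E\pi(0)}{\E\hn(0)}\,\Sigma$; this is exactly the object appearing in the limiting covariance, and integrability of the relevant functional under $\Q_Y$ follows from property~(3) of the corrector together with $\E\hn(0)<\infty$. (ii) Apply the martingale functional central limit theorem (Helland, or the version in Durrett) — whose Lindeberg condition is verified using the $L^p$-bound \eqref{E:1.8*} on $\sum_x C_{0,x}|x|^2$, which controls the large jumps — to deduce that $M^{(n)}$ converges in law to $\wt B$ on $\DD([0,T])$, and hence the finite-dimensional distributions of $M^{(n)}$ converge to those of $\wt B$. (iii) Prove that the corrector term is negligible: $\tfrac1{\sqrt n}\chi(Y_{nt})\to0$ in $P^0$-probability for each fixed $t$, which combined with $Y^{(n)}_t=M^{(n)}_t-\tfrac1{\sqrt n}\chi(Y_{nt})$ upgrades the convergence of $M^{(n)}$ to convergence of $Y^{(n)}$.

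Step (iii) is the main obstacle, and it is where the heat-kernel and exit-time estimates of Section~\ref{sec3} enter. The argument: sublinearity on average \eqref{E:sub} says that the set of ``bad'' sites $\{x\colon|\chi(x)|>\delta\sqrt n\}$ in $B(0,K\sqrt n)$ has density $o(1)$; to conclude that the walk, run for time $nt$ and confined (with high probability, by Proposition~\ref{P:ex}) to $B(0,K\sqrt n)$ for suitable $K$, lands on such a site with vanishing probability, one needs an upper bound on the time the walk spends at bad sites. This is supplied by the killed heat-kernel bound of Proposition~\ref{P:heat}: integrating $p^{B(0,K\sqrt n)}(s,0,x)\le c(K\sqrt n)^{-d}(s/(K\sqrt n)^2)^{-(2+\epsilon)/\epsilon}\hn(x)$ over $x$ in the bad set and $s\in[\eta nt, nt]$ (handling small times separately) shows the expected occupation measure of the bad set is $o(1)$ times a harmless factor — here one uses that $\hn$ has finite $\Q_Y$-mean and a further spatial ergodic/Chebyshev argument to control $\sum_{x\in\text{bad}\cap B}\hn(x)$. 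I would first reduce to $t>0$ fixed (the $t=0$ term being trivial), localize to a large ball losing probability $c/K^2$, split time at a small threshold $\eta nt$ to absorb the non-integrable singularity of the heat kernel near $s=0$ using only that $\chi$ is sublinear along coordinate directions, and then let $\delta\to0$, $K\to\infty$. The remaining ingredients — identification of $\Sigma$ with \eqref{E:sigma}, non-degeneracy (which follows since $\Sigma v\cdot v\ge \frac1{\E\pi(0)}\E(\sum_{|x|=1}C_{0,x}(v\cdot x+v\cdot(\chi(x)-\chi(0)))^2)$ and an $L^2$-orthogonality/variational lower bound rules out degeneracy under \eqref{E:2.1}), and passage from $\DD$ to $\CC$ convergence of finite-dimensional marginals — are routine.
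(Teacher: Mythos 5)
Your three-step plan — martingale CLT for the harmonic coordinate $\Psi(Y_{nt})/\sqrt n$, plus the negligibility of the corrector term via sublinearity on average combined with exit-time and heat-kernel bounds — is exactly the structure of the paper's proof, and your identification of the covariance $t\,\frac{\E\pi(0)}{\E\hn(0)}\Sigma$ via $\E_{\Q_Y}[\,\cdot\,]=\frac1{\E\hn(0)}\E[\hn(0)\,\cdot\,]$ is correct. Two points where you diverge from the paper's mechanics, one neutral and one a genuine (but fixable) slip.

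First, you run the martingale CLT directly on the continuous-time chain $Y$ (ergodic theorem for the $\Q_Y$-stationary environment process, quadratic variation of $\Psi(Y_t)$, functional martingale CLT). The paper instead applies the classical discrete-time corrector CLT to $\Psi(Z_{\lfloor tn\rfloor})/\sqrt n$ (quoting it as a known result) and then transfers to $Y$ by the time-change $N_t/t\to\E\pi(0)/\E\hn(0)$, using the $\Q_Z$-ergodic theorem and a renewal/WLLN argument. Both routes are legitimate; the discrete-time route lets one cite an off-the-shelf result and pushes the entire continuous-time complication into a scalar time change, which is why the paper prefers it. Your version requires verifying the Lindeberg condition for the continuous-time martingale yourself — and here your attribution is a little off: what controls Lindeberg in this setting is the weighted $\ell^2$-integrability of the corrector increments, property (3) of $\chi$, together with $\E\hn(0)<\infty$, not the higher-moment bound \eqref{E:1.8*}. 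The $L^p$ bound on $\hn$ is needed later, in the corrector-negligibility step, not here.

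Second, in step (iii) you propose to control the \emph{occupation time} of bad sites by integrating the killed heat kernel $p^{B(0,K\sqrt n)}(s,0,x)$ over $s\in[\eta nt,nt]$. This is a detour that, as written, does not quite close: bounding expected occupation time does not directly bound the one-time probability $P^0(Y_{nt}\in\{|\chi|>\delta\sqrt n\})$, which is what you actually need. The paper's argument is simpler and sidesteps this entirely: after localizing with Proposition~\ref{P:ex}, it applies Proposition~\ref{P:heat} \emph{at the single fixed time} $s=tn$ to get $P^0(\tau>tn,\,Y_{tn}=x)\le c\,\eta^d(t\eta^2)^{-(2+\epsilon)/\epsilon}n^{-d/2}\hn(x)$, then sums over the bad set with H\"older (exponent $p$) to decouple the $\hn(x)^p$ average (bounded a.s.\ by the spatial ergodic theorem) from the density of bad sites (which vanishes by \eqref{E:sub}). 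You already have all the right tools in hand, so the fix is just to drop the time integration and read off the bound at time $tn$; there is then no singular short-time contribution to worry about.

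Finally, a bookkeeping remark: the proposition asserts convergence on a \emph{single} full-measure set uniformly over all $k$ and $(t_1,\dots,t_k)$. The paper builds this carefully through nested events $\Omega_1\supseteq\Omega_2\supseteq\Omega_3\supseteq\Omega^\star$ tied to the corrector construction, the discrete-time CLT, the $\Q_Z$-ergodic theorem, and the finiteness of $R_0,R_1$. Your proposal should, at minimum, note that the a.s.\ statements from steps (i)–(iii) can be packaged into one such set; this is routine but worth a sentence.
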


\begin{proofsect}{Proof}
One of the main issues in the proof is a proper demonstration of the set of conductances of full~$\BbbP$-measure on which \eqref{E:3.60a} holds for all~$k$-tuples $(t_1,\dots,t_k)$ with the stated properties. We will therefore keep careful track of all requisite events.

Let~$\Psi(x)$ denote the ``harmonic coordinate'' function from \eqref{E:2.5}; this is defined (and depends on) conductances in a measurable set~$\Omega_1$ with~$\BbbP(\Omega_1)=1$. Given a realization of the conductances and a path $Z:=\{Z_n:n\ge1\}$ of the discrete-time Markov chain, consider the random variables $\{\Psi(Z_n)\}$. A classical argument (cf, e.g., Corollary~3.10 of Biskup~\cite{Biskup-review}) based on the fact that $\Psi(Z_n)$ is a martingale implies that, under our standing assumptions, there is a measurable set~$\Omega_2\subseteq\Omega_1$ with~$\BbbP(\Omega_2)=1$ such that for each realization of conductances in~$\Omega_2$, the law of
\begin{equation}
t\mapsto \frac1{\sqrt n}\Psi(Z_{\lfloor tn\rfloor})
\end{equation}
induced by~$P^0$ on $\DD([0,T])$ --- in fact, even on $\CC([0,T])$, provided we interpolate values linearly --- tends to Brownian motion with mean zero and covariance~$\Sigma$.

Next we will prove a similar statement for $t\mapsto\frac1{\sqrt n}\Psi(Y_{nt})$ but for that we have to control the time change that takes~$Z$ into~$Y$. To that end, conditionally on~$Z$, let $T_0,T_1,\dots$ denote independent exponentials with parameters $\pi(Z_0)/\hn(Z_0),\pi(Z_1)/\hn(Z_1),\dots$, respectively. Then $\{\wt Y_t\colon t\ge0\}$, defined by
\begin{equation}
\label{E:3.66q}
\wt Y_t:=Z_{N_t}\quad\text{for}\quad N_t:=\max\{k\ge0\colon T_1+\dots+T_k\le t\},
\end{equation}
has the law of $\{Y_t\colon t\ge0\}$.
Letting $\Omega_3\subseteq\Omega_2$ be the subset of conductances on which
\begin{equation}
\lim_{n\to\infty}\,\frac1n\sum_{k=0}^n\frac{\hn(Z_k)}{\pi(Z_k)}=\E_{\Q_Z}\Bigl(\frac{\hn(0)}{\pi(0)}\Bigr),\qquad P^0\text{-a.s.},
\end{equation}
and
\begin{equation}
\label{E:3.63qq}
\forall\epsilon>0\colon\qquad
\lim_{n\to\infty}\,\frac1n\sum_{k=0}^n\frac{\hn(Z_k)}{\pi(Z_k)}\IA_{\{\hn(Z_k)/\pi(Z_k)>\epsilon n\}}=0,\qquad P^0\text{-a.s.}
\end{equation}
The stationarity and ergodicity of $\Q_Z$ with respect to the chain on environments induced by~$Z$ guarantees (via the Pointwise Ergodic Theorem) that $\BbbP(\Omega_3)=1$. Invoking the Weak Law of Large Numbers (with a simple truncation step enabled by \eqref{E:3.63qq}) and a renewal argument, we then have
\begin{equation}
\label{E:3.69q}
\frac{N_t}t\,\underset{t\to\infty}\longrightarrow\,\frac{\E\pi(0)}{\E\hn(0)}\qquad\text{in $P^0$-probability}
\end{equation}
for all conductances from~$\Omega_3$. In light of monotonicity of $t\mapsto N_t$, this gives a locally-uniform closeness of $s\mapsto N_{ts}/t$ to a linear function. By the definition of the Skorohod topology, the identification $\wt Y\laweq Y$ now shows that also the law
\begin{equation}
t\mapsto\frac1{\sqrt n}\Psi(Y_{nt})
\end{equation}
induced by $P^0$ on $\DD([0,T])$ tends to that of a Brownian motion with mean zero and covariance $(\E\pi(0)/\E\hn(0))\Sigma$, for every realization of conductances in~$\Omega_3$.

Since convergence on $\DD([0,T])$ to a process with continuous paths implies convergence of finite-dimensional distributions, to get \eqref{E:3.60a} it now suffices to identify a measurable set $\Omega^\star\subseteq\Omega_3$ of conductances with $\BbbP(\Omega^\star)=1$ such that
\begin{equation}
\label{E:3.63}
\frac1{\sqrt n}\,\,|\chi(Y_{tn})|
\,{\underset{n\to\infty}\longrightarrow}\,0\qquad\text{in $P^0$-probability,}
\end{equation}
holds on~$\Omega^\star$ for each $t\ge0$. For this
we argue as follows. For any $\eta>0$,
\begin{equation}
\label{E:4.16eq}\begin{aligned}
P^0\bigl(|\chi(Y_{tn})|\ge \eta \sqrt{n}\bigr)\le& P^0\bigl(\tau_{B(0,\eta^{-1}\sqrt{n})}(Y)\le tn\bigr)\\
&+\sum_{|x|\le \eta^{-1}\sqrt{n}}\IA_{\{|\chi(x)|>\eta \sqrt{n}\}} P^0\bigl(\tau_{B(0,\eta^{-1}\sqrt{n})}(Y)> tn, Y_{tn}=x\bigr).
\end{aligned}
\end{equation}
Assume that $\eta$ is so small that  $t\le \eta^{-2}$.
By Proposition \ref{P:ex},
\begin{equation}
P^0\bigl(\tau_{B(0,\eta^{-1}\sqrt{n})}(Y)\le tn\bigr)\le c_1\eta^2t \quad\text{whenever}\quad
\eta^{-1} \sqrt{n}\ge  R_1
\end{equation}
for some $c_1$ independent of $n$, $\eta$ and $t$,
 where $R_1=R_1(\omega)$ is as in Proposition \ref{P:ex}.
The contribution of this term to \eqref{E:4.16eq} thus vanishes as~$n\to\infty$ followed by~$\eta\downarrow0$. Let $R_0=R_0(\omega)$ be as in Proposition \ref{P:Sob}. Proposition~\ref{P:heat} in turn gives
\begin{equation}
P^0\bigl(\tau_{B(0,\eta^{-1}\sqrt{n})}(Y)> tn, Y_{tn}=x\bigr)
\le  c_2\eta^d(t\eta^2)^{-\frac{2+\epsilon}\epsilon} n^{-d/2}\tn(x)
\end{equation}
 whenever $\eta^{-1} \sqrt{n}\ge R_0$ and $t\le \eta^{-2}$, where  $c_2$ is independent of $n$, $x$, $\eta$ and $t$.
Using H\"{o}lder's inequality with~$p$ as Assumption~\ref{Asmp2}, the sum on the right of \eqref{E:4.16eq} is thus bounded by a constant times
\begin{equation}
(t\eta^2)^{-\frac{2+\epsilon}\epsilon} \left(\eta^dn^{-d/2}\sum_{|x|\le\eta^{-1}\sqrt{n}} \hn(x)^{p}\right)^{1/p}\left(\eta^dn^{-d/2}\sum_{|x|\le\eta^{-1}\sqrt{n}}\IA_{\{|\chi(x)|>\eta \sqrt{n}\}}\right)^{1-1/p}.
\end{equation}
The~$p$-integrability of~$\hn$ ensures that the term in the first large parentheses is bounded uniformly
in $n\ge1$,  $\BbbP$-a.s.  Thanks to corrector sublinearity on average~\eqref{E:sub}, the term in the second large parentheses, and thus the whole expression, tends to zero $\BbbP$-a.s.\ as $n\to \infty$. This proves \eqref{E:3.63} and thus the whole claim.
\end{proofsect}

Let us now see how the above proposition implies our main result:

\begin{proofsect}{Proof of Theorem~$\ref{thm:1}$}
Fix~$T>0$. Proposition~\ref{thm:tightthm} tells us that the laws of $Y^{(n)}$ are tight on~$\DD([0,T])$. By Proposition~\ref{prop-FDD} we then conclude that $Y^{(n)}$ converges in law to~$\wt B$
while the time-change argument in \eqref{E:3.66q} and \eqref{E:3.69q}
then shows that $t\mapsto \frac1{\sqrt n}Z_{\lfloor tn\rfloor}$, as an element of~$\DD([0,T])$, tends in law to a centered Brownian motion with covariance~$\Sigma$. As the limit process has continuous paths, this implies the convergence of the linear interpolation~$B^n$ of~$Z$-values from \eqref{E:2.1a} in the space $\CC([0,T])$.
\end{proofsect}

\subsection{Assumption \ref{Asmp2} for long-range percolation}
To complete our results concerning quenched invariance principles, it remains to verify the conditions on long-range percolation model that ensure convergence of the random walk to Brownian motion.

\begin{proofsect}{Proof of Corollary~$\ref{cor-2.3}$}
Let~$p>\frac d2$ be as in the statement. Fix any total order~$x\preceq y$ on~$\Z^d$ and let~$\{C_{x,y}\colon x,y\in\Z^d,\, x\preceq y\}$ be independent, zero-one valued random variables with $\BbbP(C_{x,y}=1)=\frakp(x-y)$,
where~$\frakp$ is as in the statement. Identify $C_{x,y}=C_{y,x}$ to get symmetric conductances. Given $n_0\ge1$ to be determined later, let
\begin{equation}
M_n:=\sum_{n_0\le |x|\le n_0+n} |x |^2 (C_{0,x}-\E C_{0,x})=\sum_{n_0\le |x|\le n_0+n} |x|^2 W(x),
\end{equation} where
\begin{equation}
W(x):=C_{0,x}-\E C_{0,x}.
\end{equation}
Then $\{M_n:n\ge1\}$ is a martingale with respect to the filtration~$\FF_n:=\sigma(C_{0,x}\colon|x|\le n_0+n)$ with the variational process
\begin{equation}
\langle M \rangle_n=\sum_{n_0\le |x|\le n_0+n}|x|^4 W(x)^2.
\end{equation}
The Burkholder-Gundy-Davis inequality thus shows, for any~$p\ge1$,
\begin{equation}
\E \bigl(|M_n|^p\bigr)\le c_1\E \bigl(\langle M \rangle_n^{p/2}\bigr)= c_1\E \left[\left(\sum_{n_0\le |x|\le n_0+ n}|x|^4 W(x)^2\right)^{p/2}\right].
\end{equation}
Furthermore, according to \cite[Theorem 1]{Lat}, for every $n\ge 1$,
\begin{multline}
\label{e3}
\quad
\E\left[\left(\sum_{n_0\le |z|\le n_0+ n}|z|^4 W(z)^2\right)^{p/2}\right]\\
\le c_2\inf\left\{t>0\colon\sum_{n_0\le |z|\le n_0+n}\log\left(\E\left[\left(1+t^{-1}|z|^4W(z)^2\right)^{p/2}\right]\right)\le p/2\right\}.
\quad
\end{multline}
We now have to estimate the infimum.

Since $-\frakp(x)\le W(x)\le \IA_{\{C_{0,x}=1\}}$, we have $\E(|W(x)|^r)\le\frakp(x)$ for all~$r\ge1$. By $(1+x)^r\le (1+a x\IA_{\{r\ge1\}}+bx^r)$ valid with $r$-dependent~$a,b>0$ for all~$x>0$, we then get that for $p\ge1$,
\begin{equation}
\begin{aligned}
 \log\left(\E\left[\left(1+t^{-1}|z|^4W(z)^2\right)^{p/2}\right]\right)
\le & \log\left(1+c_3t^{-1} |z|^{4}\frakp(z)\IA_{\{p/2\ge1\}}+c_3t^{-p/2}|z|^{2p}\frakp(z)\right)\\
\le& c_3t^{-1} |z|^{4}\frakp(z)\IA_{\{p/2\ge1\}}+c_3t^{-p/2}|z|^{2p}\frakp(z)\\
\le& c_3(t^{-1}+t^{-p/2})|z|^{2p}\frakp(z),
\end{aligned}\end{equation}
where the constant $c_3$ in the first inequality depends on $p$, in the second inequality we also used the fact that $\ln (1+x)\le x$ for all $x>0$,
and the last inequality is due to $|x|^{4}\IA_{\{p/2\ge1\}}\le |x|^{2p}$ for $|x|\ge 1$.
By our assumption, the sum on the right of \eqref{e3} is bounded by a constant times
\begin{equation}
(t^{-1}+t^{-p/2})\sum_{|x|\ge n_0}|z|^{2p}\frakp(z)
\end{equation}
uniformly in~$n\ge1$.
For a given~$t>0$, say~$t:=1$, this can be made smaller than~$p/2$ by choosing~$n_0$ sufficiently large. The infimum \eqref{e3} is then bounded by one and so $\sup_{n\ge1}\E[|M_n|^p]\le c_1c_2$. With the help of the 
Monotone Convergence Theorem we then get $\hn(0)\in L^p(\BbbP)$. The QIP then follows from Theorem \ref{thm:1}.
\end{proofsect}

As noted earlier, Corollary~\ref{cor-2.3} readily deals with the cases when $\{C_{x,y}=C_{y,x}\}_{x,y\in \Z^d}$ are independent, zero-one valued random variables with (assuming~$x\ne y$)
\begin{equation}
\BbbP(C_{x,y}=1)=\frac{1}{|x-y|^{d+s}}.
\end{equation}
A QIP is then inferred for all~$s>d$. Another example is motivated by long range stable-like random conductance models studied in \cite{CKW}. There one takes (assuming again that~$x\ne y$)
\begin{equation}
C_{x,y}:=\frac{\xi_{x,y}}{|x-y|^{d+s}}
\end{equation}
where $\{\xi_{x,y}=\xi_{y,x}\}_{x,y\in \Z^d}$ are i.i.d.\ Bernoulli random variables except for $|x-y|=1$ where we set $\xi_{x,y}:=1$. In this case the conditions of Corollary~\ref{cor-2.3} are met for all~$s>2$.

\section{Failures of everywhere sublinearity}
\label{sec5}\nopagebreak
\noindent
In this section we provide the promised counterexamples to everywhere sublinearity of the corrector and thus prove Theorems~\ref{thm-2.6} and~\ref{thm-2.8}. We begin with the counterexample arising in the context of long-range percolation.

\subsection{Long-range percolation}
 Consider long-range percolation with the connection probability~$\frakp(x)$ having the asymptotic \eqref{E:2.7} with exponent $s\in(d+2,2d)$, which is non-vacuous only when $d\ge3$. We will assume $\frakp(0)=0$, $\frakp(x)=1$ for~$x$ with $|x|=1$ and~$\frakp(x)<1$ for all~$x$ with~$|x|>1$. The conductances then obey
\begin{enumerate}
\item[(1)] $C_{x,x}=0$ for all~$x$ a.s.,
\item[(2)] $C_{x,y}=1$ whenever $|x-y|=1$ a.s.,
\item[(3)] $\BbbP(C_{x,y}=1)=\frakp(y-x)$ whenever $|x-y|>1$.
\end{enumerate}
As already mentioned, a key point is the proof of the existence of a ``long'' edge of length~$n$ from $o(n)$-neighborhood of the origin. This would itself be easy to guarantee; what makes it harder is that our arguments also need that the ``far away'' endpoint of the ``long'' edge is incident to no other edges than the nearest-neighbor ones. The exact statement is the subject of:

\begin{lemma}\label{e:Lemma5.1}
Noting that $2<s-d<d$ we may pick $\gamma\in(\frac {s-d}{d},\frac{2s-d}{2d}\wedge 1)$ and consider the event
\begin{equation}
\label{E:5.2c}
A(x,y):=\{C_{x,y}=1\}\cap\bigl\{\forall z\in\Z^d\smallsetminus\{x\}\colon |y-z|>1\,\,\Rightarrow\,\, C_{yz}=0\bigr\}.
\end{equation}
Then
\begin{equation}
\label{E:5.2qq}
A_n:=\bigcup_{\begin{subarray}{c}
x\in\Z^d\\|x|\le n^\gamma
\end{subarray}}\,\,
\bigcup_{\begin{subarray}{c}
y\in\Z^d\\n<|y|\le 2n
\end{subarray}}A(x,y)
\end{equation}
occurs for infinitely many~$n$ a.s.
\end{lemma}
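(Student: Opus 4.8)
The plan is a second-moment (Paley--Zygmund) estimate followed by a $0$--$1$ law. Set
\[
N_n:=\#\bigl\{(x,y)\in\Z^d\times\Z^d\colon |x|\le n^\gamma,\ n<|y|\le 2n,\ A(x,y)\text{ occurs}\bigr\},
\]
so that $A_n=\{N_n\ge1\}$. Since \eqref{E:2.7} gives $\frakp(x)=|x|^{-s+o(1)}$ with $s>d$, one has $\sum_{w\in\Z^d}\frakp(w)<\infty$; combined with $\frakp(w)<1$ for $|w|>1$ this yields $\rho:=\prod_{|w|>1}(1-\frakp(w))>0$ and $\bar p:=\sup_{|w|>1}\frakp(w)<1$. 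For $n$ large (so that $|y-x|\ge|y|-|x|>n-n^\gamma>1$ throughout the ranges above), $A(x,y)$ depends only on the states of the long edges incident to~$y$, requiring that among them exactly the edge to~$x$ be present; by independence of the conductances,
\[
\BbbP\bigl(A(x,y)\bigr)=\frakp(y-x)\prod_{\substack{z\ne x\\|y-z|>1}}\bigl(1-\frakp(y-z)\bigr)\ge\rho\,\frakp(y-x).
\]
Since $n/2\le|y-x|\le 3n$ on these ranges, \eqref{E:2.7} gives $\frakp(y-x)=n^{-s+o(1)}$ uniformly, and summing over the $\asymp n^{\gamma d}\cdot n^{d}$ admissible pairs yields $\E N_n\ge n^{d(1+\gamma)-s-o(1)}\to\infty$, using $\gamma>\frac{s-d}{d}$. (Only this lower bound on $\gamma$, together with $\gamma<1$, enters the proof of the lemma.)

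For the second moment I would write $\E(N_n^2)=\sum\BbbP(A(x,y)\cap A(x',y'))$ over ordered pairs of admissible $(x,y),(x',y')$ and split according to whether $y=y'$. If $y=y'$ and $x\ne x'$, then (using $|y-x'|>n-n^\gamma>1$ for $n$ large) $A(x,y)$ forces $C_{x',y}=0$ while $A(x',y)$ forces $C_{x',y}=1$, so the intersection is empty; the $y=y'$ part therefore contributes exactly $\E N_n$. If $y\ne y'$, a short case check --- again invoking $n-n^\gamma>1$ to exclude the coincidences $y=x'$, $y'=x$ and $\{x,y\}=\{x',y'\}$ --- shows that the long edges governing $A(x,y)$ and those governing $A(x',y')$ overlap in at most the single edge $\langle y,y'\rangle$, on which both events impose the same requirement (absence) when $|y-y'|>1$ and no requirement when $|y-y'|=1$. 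Hence $\BbbP(A(x,y)\cap A(x',y'))\le(1-\bar p)^{-1}\BbbP(A(x,y))\BbbP(A(x',y'))$, and summing over all pairs with $y\ne y'$ bounds that contribution by $(1-\bar p)^{-1}(\E N_n)^2$. Altogether $\E(N_n^2)\le\E N_n+(1-\bar p)^{-1}(\E N_n)^2\le C(\E N_n)^2$ for $n$ large, so Paley--Zygmund (or Cauchy--Schwarz) gives $\BbbP(A_n)=\BbbP(N_n\ge1)\ge(\E N_n)^2/\E(N_n^2)\ge 1/C$ for all large~$n$.

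To pass from $\limsup_n\BbbP(A_n)>0$ to $\BbbP(A_n\text{ i.o.})=1$, I would observe that $\{A_n\text{ i.o.}\}$ is a tail event for the independent family $\{C_e\colon e\text{ an edge with }|e|>1\}$: changing a single edge $\langle u,v\rangle$ can affect $A(x,y)$ only if $y\in\{u,v\}$, hence can affect $A_n$ only for the finitely many $n$ with $n<|u|\le2n$ or $n<|v|\le2n$. Kolmogorov's $0$--$1$ law then forces $\BbbP(A_n\text{ i.o.})\in\{0,1\}$, while reverse Fatou gives $\BbbP(A_n\text{ i.o.})\ge\limsup_n\BbbP(A_n)\ge 1/C>0$; therefore $\BbbP(A_n\text{ i.o.})=1$.

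The step requiring the most care is the second-moment bound: one must verify that distinct admissible pairs produce events depending on essentially disjoint sets of long edges, which hinges on the scale separation $n^\gamma\ll n$ to rule out the degenerate coincidences, and one must also track the $o(1)$ in \eqref{E:2.7} uniformly over the annulus and secure the constants $\rho,\bar p$ from the summability of~$\frakp$.
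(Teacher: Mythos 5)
Your proof is correct but follows a genuinely different route from the paper's. The paper shows that $A_n^\cc$ is contained in the union of an auxiliary ``bad'' event $B_n$ (two long edges from $B(0,n^\gamma)$ into the annulus whose far endpoints coincide or are themselves connected) whose probability decays only polynomially, and a residual event $\wt A_n^\cc\cap B_n^\cc$ whose probability decays stretched-exponentially; it then applies the first Borel--Cantelli lemma, passing to the geometric subsequence $n=2^k$ because $\BbbP(B_n)$ is not summable over all $n$, and it is exactly the estimate on $B_n$ that requires the upper constraint $\gamma<\frac{2s-d}{2d}$. You instead run a second-moment computation on the count $N_n$: the same disjointness observation that makes the paper's $B_n$ unlikely appears in your argument as the incompatibility of $A(x,y)$ and $A(x',y)$ for $x\ne x'$ and as the near-independence of $A(x,y)$ and $A(x',y')$ when $y\ne y'$ (overlap only through $\langle y,y'\rangle$, on which both impose absence), giving $\E(N_n^2)\le C(\E N_n)^2$ and hence $\liminf_n\BbbP(A_n)>0$ via Paley--Zygmund. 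You then close with Kolmogorov's $0$--$1$ law, correctly observing that $\{A_n\ \text{i.o.}\}$ is tail-measurable because any fixed long edge $\langle u,v\rangle$ influences $A_n$ only for the finitely many $n$ with $n<|u|\le 2n$ or $n<|v|\le 2n$. Your route is arguably cleaner, avoids the Borel--Cantelli subsequence trick, and needs only $\gamma\in(\frac{s-d}{d},1)$ rather than the full range in the statement, so it in fact proves slightly more than the lemma asserts; what the paper's approach buys is a quantitative (stretched-exponential plus polynomial) rate along the subsequence, which a pure second-moment-plus-$0$--$1$-law argument does not record.
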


\begin{proofsect}{Proof}
Instead of \eqref{E:5.2c} consider the event
\begin{equation}
\wt A(x,y):=\{C_{x,y}=1\}\cap\bigl\{\forall z\in\Z^d\colon |y-z|>1\,\,\&\,\,|z|>n^\gamma\,\,\Rightarrow\,\, C_{yz}=0\bigr\}
\end{equation}
whose advantage over~$A(x,y)$ is that the two events on the right are now independent as soon as~$x$ and~$y$ are as in the union in \eqref{E:5.2qq}. Set
\begin{equation}
\wt A_n:=\bigcup_{\begin{subarray}{c}
x\in\Z^d\\|x|\le n^\gamma
\end{subarray}}\,\,
\bigcup_{\begin{subarray}{c}
y\in\Z^d\\n<|y|\le 2n
\end{subarray}}\wt A(x,y).
\end{equation}
Obviously, $A_n\subset \wt A_n$. Moreover, for~$n$ so large that $n^\gamma<n-1$ (note that~$\gamma<1$), on $A_n^\cc \smallsetminus \wt A_n^\cc$ there is an edge between some $x$ with $|x|\le n^\gamma$ and some~$y$ with $n\le|y|\le 2n$ so that~$y$ has another edge to some $x'$ with $|x'|\le n^\gamma$. Defining, also for later use,
\begin{equation}
B_n:=\left\{\exists x,x',y,y'\in\Z^d\colon\,
\begin{aligned}
&|x|,|x'|\le n^\gamma,\, n\le |y|,|y'|\le 2n
\\
&(x,y)\ne(x',y'),\,C_{x,y}=1=C_{x',y'}
\\
&\,y\ne y'\,\Rightarrow\, C_{y,y'}=1
\end{aligned}
\right\},
\end{equation}
we thus have
\begin{equation}
\label{E:5.6aa}
A_n^\cc\subseteq (\wt A_n^\cc\cap B_n^\cc)\cup B_n.
\end{equation}
We will now proceed to estimate probabilities of two events on the right-hand side.

For the probability of~$B_n$, we invoke a straightforward union bound. Let $\Xi_n$ denote the set of all quadruples $(x,x',y,y')$ that satisfy the geometrical conditions in event~$B_n$. Then, for some constants~$c,c'<\infty$,
\begin{equation}
\label{E:5.7cc}
\begin{aligned}
\BbbP(B_n)
&\le\sum_{x,x',y,y'\in\Xi_n} \frakp(y-x)\frakp(y'-x')\bigl(\delta_{y,y'}+\frakp(y-y')\bigr)
\\
&\le c n^{2d\gamma-2s+o(1)}\!\!\!\!\!\sum_{\begin{subarray}{c}
y,y'\in\Z^d\\n\le|y|,|y'|\le 2n
\end{subarray}}
\bigl(\delta_{y,y'}+\frakp(y-y')\bigr)\le c'n^{2d\gamma-2s+d+o(1)},
\end{aligned}
\end{equation}
where we first used that both $\frakp(y-x)$ and $\frakp(y'-x')$ are at most $n^{-s+o(1)}$, then carried out the sums over~$x$ and~$x'$ to get a constant times~$n^{d\gamma}$ from each and, finally, applied that $z\mapsto \frakp(z)$ is summable because $s>d$. Noting that, in light of our choice of~$\gamma$, the final exponent in \eqref{E:5.7cc} is negative, we get that $B_{2^n}$ occurs only for finitely many~$n$, a.s.

Concerning the first event in \eqref{E:5.6aa}, let~$N$ denote the number of edges between some~$x$ with $|x|\le n^\gamma$ and some~$y$ with $n\le|y|\le 2n$ and let $\{(x_i,y_i)\colon i=1,\dots,N\}$ list the corresponding pairs of vertices connected by these edges. On $\wt A_n^\cc\cap B_n^\cc$ we then know that (once $N>1$) all $y_i$ are distinct and each~$y_i$ must have at least one non-nearest neighbor edge to a vertex~$z$ with $|z|>n^\gamma$ and $z\not\in\{y_1,\dots,y_N\}$. Conditioning on $\FF_n:=\sigma(C_{x,y}\colon |x|\le n^\gamma,\,n\le|y|\le2n)$, we thus have
\begin{equation}
\BbbP\bigl(\wt A_n^\cc\cap B_n^\cc\big|\FF_n\bigr)
\le\IA_{\{N=0\}}+\IA_{\{N>0\}}\,\,\prod_{j=1}^N\biggl(1-\!\!\prod_{\begin{subarray}{c}
y\ne y_1,\dots,y_N\\|y-y_j|>1
\end{subarray}}
\bigl(1-\frakp(y-y_j)\bigr)\,\biggr),
\end{equation}
where~$N$ and $(y_1,\dots,y_n)$ are as specified above. The product is bounded from below by
\begin{equation}
c:=\prod_{|z|>1}(1-\frakp(z))
\end{equation}
which is positive by the summability of~$\frakp$ and our assumption that $\frakp(z)<1$ once~$|z|>1$. Hence,
\begin{equation}
\label{E:5.10}
\BbbP\bigl(\wt A_n^\cc\cap B_n^\cc\bigr)\le \BbbP(N\le n^\delta)+(1-c)^{n^\delta}
\end{equation}
holds true for any $\delta>0$. To estimate $\BbbP(N\le n^\delta)$, let
\begin{equation}
\tilde q_n:=\min_{|x|\le n^\gamma}\min_{n\le|y|\le 2n} \frakp(y-x).
\end{equation}
and let $V_n$ be the number of pairs $(x,y)$ with $|x|\le n^\gamma$ and $n\le|y|\le 2n$. Then~$N$ is stochastically dominated from below by a binomial random variable with parameters $V_n$ and $\tilde q_n$. As $V_n\tilde q_n=n^{d(1+\gamma)-s+o(1)}$ with $d(1+\gamma)-s>0$ by our assumptions about~$\gamma$, the probability $\BbbP(N\le n^\delta)$ decays, for~$\delta$ positive but small, exponentially in a power of~$n$. Using this in \eqref{E:5.10}, the Borel-Cantelli lemma implies that $\wt A_n^\cc\cap B_n^\cc$ occurs only finitely often a.s.
\end{proofsect}

With the existence of the desired ``long'' edge established, we can move to the construction of a counterexample to everywhere sublinearity of the corrector.

\begin{proofsect}{Proof of Theorem~\ref{thm-2.6}}
Consider the long-range percolation setting as specified above.
The asymptotic \eqref{E:2.7} with~$s>d+2$ implies $\E(\sum_{x\in\Z^d}C_{0,x}|x|^2)<\infty$ and so the corrector can be defined by any of the standard methods (see, e.g., Biskup~\cite[Section~3]{Biskup-review} for a discussion of these). In fact, by 
\eqref{E:sub}
above,
the corrector is sublinear on average (cf.~\cite[Proposition~4.15]{Biskup-review}), meaning that $\{x\colon|\chi(x)|>\epsilon|x|\}$ is, for each~$\epsilon>0$, a set of zero density in~$\Z^d$.

To show that~$\chi$ is not sublinear everywhere in the sense of \eqref{E:1.9} we will assume, for the sake of contradiction, that for each~$\epsilon>0$ there is a (random)~$K<\infty$ such that
\begin{equation}
\label{E:chi-bd}
|\chi(x)|\le K+\epsilon|x|,\qquad x\in\Z^d.
\end{equation}
(This is equivalent to \eqref{E:1.9}.) Suppose that~$A_n$ occurs and let~$x$ and~$y$ be the endpoints of an edge that make $A(x,y)$ in the definition of~$A_n$ occur. The harmonicity condition \eqref{E:2.6} for~$\Psi$ from~\eqref{E:2.5} at point~$y$ then reads
\begin{equation}
x+\chi(x)-\bigl(y+\chi(y)\bigr)+\sum_{z\colon|z|=1}\bigl(z+\chi(y+z)-\chi(y)\bigr)=0,
\end{equation}
where we noted that $C_{x'y}=1$ for~$x'=x$ and~$x'$ being a neighbor of~$y$; otherwise~$C_{x'y}=0$. Applying \eqref{E:chi-bd} and the fact that $|x|,|y|,|y+z|\le 2n+1$ for all~$z$ with~$|z|=1$ yield
\begin{equation}
|y-x|\le (2+4d)K+2d+\epsilon(2+4d)(2n+1).
\end{equation}
For~$\epsilon$ small this contradicts $|y-x|>n-n^\gamma$. Hence, by Lemma \ref{e:Lemma5.1}, \eqref{E:chi-bd} cannot occur on~$A_n$ for~$n$ large enough and, since~$A_n$ does occur
 for infinitely many~$n$ a.s., \eqref{E:chi-bd} fails~a.s.
\end{proofsect}

\subsection{Nearest-neighbor conductances}
Next we move to the context underlying Theorem~\ref{thm-2.8}. We start by defining some auxiliary processes that will be  used later to construct the desired environment law~$\BbbP$. As all of these live on the same probability space, we will keep using the same~$\BbbP$ throughout. In the construction we assume that~$d\ge2$ although the ultimate conclusion will be restricted
to $d\ge 3$.

Let $\{\xi_L(x)\colon L\ge1,x\in\Z^d\}$ be independent 0-1-valued random variables with\
\begin{equation}
\BbbP(\xi_L(x)=1)=L^{-d}.
\end{equation}
Note that~$\xi_1(x)=1$ a.s.\ for all~$x$.
Consider
a strictly increasing sequence $\{L_k:k\ge1\}$ of integers with $L_1=1$. A simple use of the Borel-Cantelli lemma shows
\begin{equation}
\sum_{k\ge1}L_k^{-d}<\infty\quad\Rightarrow\quad\sup\bigl\{k\ge1\colon\xi_{L_k}(x)=1\bigr\}<\infty \text{ a.s. }\forall x\in\Z^d.
\end{equation}
(The set on the right is non-empty a.s.\ as $\xi_1(x)=1$ a.s.)
Thus, assuming henceforth $L_k^{-d}$ to be summable, let~$\ell(x)$ denote the maximal~$k$ with $\xi_{L_k}(x)=1$.

Next let $\hate_1,\dots,\hate_d$ be the unit vectors in the coordinate directions and let us regard~$\Z^{d-1}$ as the integer span of $\{\hate_2,\dots,\hate_d\}$. Denote by
\begin{equation}
\Lambda_L:=\bigl\{j\hate_1+z\colon j=-3L,\dots,3L,\,z\in\Z^{d-1},\,|z|\le1\bigr\}\smallsetminus\{0\}.
\end{equation}
the set consisting of~$6L$ vertices in the first coordinate direction and centered at, but not containing, the origin along with all of their nearest neighbors in the other coordinate directions.
Note that
\begin{equation}
\BbbP(\ell(x)\ge j)=1-\prod_{k\ge j}(1-L_k^{-d})\le
\sum_{k\ge j}
L_k^{-d}.
\end{equation}
By the monotonicity of~$k\mapsto L_k$, we have
$\sum_{j\ge 1}L_j\sum_{k\ge j}L_k^{-d}
=\sum_{k\ge 1}\sum_{j=1}^kL_jL_k^{-d}\le \sum_{k\ge1}kL_k^{1-d}$,
so another use of the Borel-Cantelli lemma
gives
\begin{equation}
\sum_{k\ge1}kL_k^{1-d}<\infty
\quad\Rightarrow\quad\sup\bigl\{k\ge1\colon\max_{z\in\Lambda_{L_k}}\ell(x+z)\ge k\bigr\}<\infty \text{ a.s. }\forall x\in\Z^d.
\end{equation}
Assuming henceforth $kL_k^{1-d}$ to be summable, let~$m(x)$ denote the maximal~$k$ in this set for the given~$x$. As~$\{L_k:k\ge1\}$ is increasing, we get
\begin{equation}
\label{E:4.10}
m(x)<\ell(x)\quad\Rightarrow\quad m(x+z)\ge\ell(x)>\ell(x+z),\qquad z\in\Lambda_{L_{\ell(x)}}.
\end{equation}
Obviously, the collection $\{(\ell(x),m(x))\colon x\in\Z^d\}$ is stationary. Moreover, as~$\Lambda_L$ does not contain the origin, $m(x)$ is independent of~$\ell(x)$ for each~$x$. We now observe:

\begin{lemma}
\label{lemma-4.1}
Suppose that $\sum_{k\ge1}kL_k^{1-d}<\infty$. Then there is a constant~$c>0$ such that
\begin{equation}
\label{E:4.11}
cL_k^{-d}\le \BbbP\bigl(m(x)<\ell(x)=k\bigr)\le L_k^{-d}
\end{equation}
holds true for all
$k\ge2$ and all~$x\in\Z^d$.
Moreover, if also $L_{k+1}>2L_k$ for all $k\ge1$, then
\begin{equation}
\label{E:5.12a}
\bigl\{\exists x\in\Z^d\colon  L_k\le|x|_\infty\le 2L_k,\,m(x)<\ell(x)=k\bigr\}
\end{equation}
occurs for infinitely many~$k$, a.s.
\end{lemma}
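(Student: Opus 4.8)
The plan is to treat the two assertions in turn, and for \eqref{E:4.11} I would use the independence of $\ell(x)$ and $m(x)$ recorded just above the lemma. The upper bound is immediate: $\{m(x)<\ell(x)=k\}\subseteq\{\ell(x)=k\}$, and $\BbbP(\ell(x)=k)=L_k^{-d}\prod_{j>k}(1-L_j^{-d})\le L_k^{-d}$. For the lower bound, factor $\BbbP(m(x)<\ell(x)=k)=\BbbP(\ell(x)=k)\,\BbbP(m(x)<k)$. When $k\ge2$ we have $\prod_{j>k}(1-L_j^{-d})\ge\prod_{j\ge3}(1-L_j^{-d})$, a strictly positive constant because $L_j\ge3$ for $j\ge3$ and $\sum_jL_j^{-d}\le\sum_jjL_j^{1-d}<\infty$; hence $\BbbP(\ell(x)=k)\ge c_1L_k^{-d}$. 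Since $m(x)\ge1$ always and $\{m(x)<k\}$ grows with $k$, it then suffices to show $\BbbP(m(x)=1)>0$, which is the only non-trivial point here.

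To prove $\BbbP(m(x)=1)>0$, rewrite $\{m(x)=1\}=\{m(x)<2\}=\bigcap_{j\ge2}\bigcap_{z\in\Lambda_{L_j}}\{\ell(x+z)\le j-1\}$ as an intersection over sites: for $w$ in the (infinite) ``cylinder'' $x+\bigcup_j\Lambda_{L_j}$ the binding constraint is $\ell(w)\le a_w$, where $a_w:=\max\{1,\,\min\{j:w-x\in\Lambda_{L_j}\}-1\}$. Since $\ell(w)$ is a function of the single fibre $\{\xi_{L_j}(w):j\ge1\}$, the events $\{\ell(w)\le a_w\}$ over distinct $w$ are independent, so $\BbbP(m(x)=1)=\prod_w\BbbP(\ell(w)\le a_w)$, each factor being at least $\prod_{j\ge2}(1-L_j^{-d})>0$; the infinite product is then positive once $\sum_w\BbbP(\ell(w)>a_w)<\infty$. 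There are at most $c(d)L_j$ sites $w$ with $w-x\in\Lambda_{L_j}\smallsetminus\Lambda_{L_{j-1}}$, and for these $a_w\ge j-1$, so $\BbbP(\ell(w)>a_w)\le\BbbP(\ell(w)\ge j)\le\sum_{i\ge j}L_i^{-d}$; hence the series is at most $c(d)\sum_{j\ge1}L_j\sum_{i\ge j}L_i^{-d}=c(d)\sum_{i\ge1}L_i^{-d}\sum_{j\le i}L_j\le c(d)\sum_iiL_i^{1-d}<\infty$ by the hypothesis. Thus $\BbbP(m(x)=1)=:c_2>0$, which by stationarity does not depend on $x$, and \eqref{E:4.11} follows with $c:=c_1c_2$.

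For \eqref{E:5.12a} put $E_k:=\{\exists x\in\Z^d:L_k\le|x|_\infty\le2L_k,\ m(x)<\ell(x)=k\}$ and let $N_k$ count such $x$. The set $\{x:L_k\le|x|_\infty\le2L_k\}$ has cardinality between two positive multiples of $L_k^d$, so \eqref{E:4.11} gives $c'\le\E N_k\le C'$ for all $k\ge2$. For the second moment, $\E N_k^2=\sum_{x,x'}\BbbP\bigl(\{m(x)<\ell(x)=k\}\cap\{m(x')<\ell(x')=k\}\bigr)$: the diagonal contributes $\E N_k\le C'$, while for $x\ne x'$ I would discard the $m$-conditions and bound the joint probability by $\BbbP(\ell(x)=k,\ell(x')=k)=\BbbP(\ell(x)=k)\,\BbbP(\ell(x')=k)\le L_k^{-2d}$ --- the factorization being exact since $\ell(x)$ and $\ell(x')$ depend on disjoint families of the $\xi_L(\cdot)$. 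Hence the off-diagonal part is at most $(c(d)L_k^d)^2L_k^{-2d}$, a constant, so $\E N_k^2\le C$ for all $k\ge2$. The Paley--Zygmund inequality then gives $\BbbP(E_k)=\BbbP(N_k\ge1)\ge(\E N_k)^2/\E N_k^2\ge\delta$ with $\delta:=(c')^2/C>0$, for every $k\ge2$; note that the extra hypothesis $L_{k+1}>2L_k$, which makes the shells disjoint, is not actually needed for this route.

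Finally I would upgrade $\BbbP(E_k)\ge\delta$ to the a.s.\ statement that $E_k$ occurs infinitely often, and since the $E_k$ are not independent I would use a $0$--$1$ law rather than Borel--Cantelli. The point is that $E_k$ is $\sigma(\xi_{L_j}:j\ge k)$-measurable: $\{\ell(x)=k\}$ involves only $\{\xi_{L_j}(x):j\ge k\}$, while $\{m(x)<k\}$ requires, for each $j\ge k$ and each $z\in\Lambda_{L_j}$, that $\xi_{L_i}(x+z)=0$ for all $i\ge j$, and as $j\ge k$ this involves only scales $L_i$ with $i\ge k$. Consequently $\limsup_kE_k\in\bigcap_{m\ge1}\sigma(\xi_{L_j}:j\ge m)$, the tail $\sigma$-algebra of the independent sequence of fields $\bigl(\xi_{L_j}(\cdot)\bigr)_{j\ge1}$, so Kolmogorov's $0$--$1$ law yields $\BbbP(\limsup_kE_k)\in\{0,1\}$; reverse Fatou gives $\BbbP(\limsup_kE_k)\ge\limsup_k\BbbP(E_k)\ge\delta>0$, whence $\BbbP(\limsup_kE_k)=1$, which is \eqref{E:5.12a}. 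I expect the two genuinely delicate points to be (i) the positivity $\BbbP(m(x)=1)>0$, an infinite intersection controlled only through the summability $\sum_kkL_k^{1-d}<\infty$, and (ii) this last step, where the absence of independence of the $E_k$ is sidestepped by the observation that their $\limsup$ is tail-measurable.
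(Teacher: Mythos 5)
Your proposal is correct. Part~1 follows essentially the same route as the paper: you use the independence of $\ell(x)$ and $m(x)$ and factor the probability, with the only cosmetic difference being that you bound $\BbbP(m(x)<k)\ge\BbbP(m(x)=1)$ by expanding that single event as an independent product over sites $w$, whereas the paper writes out $\BbbP(m(x)<k)$ directly as a product of two infinite products; both give a $k$-uniform positive lower bound from the summability of $kL_k^{1-d}$. Your site-by-site decomposition of $\{m(x)=1\}$ and the verification that the resulting infinite product is positive (via $\sum_w\BbbP(\ell(w)>a_w)\le c(d)\sum_i iL_i^{1-d}<\infty$) are complete and accurate.

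Part~2 is where you genuinely diverge. The paper sets up disjoint single-site events $G_k(y,j)$ along the $\hate_1$-direction (disjointness from \eqref{E:4.10}), gets $\BbbP(\bigcup_j G_k(y,j))\ge cL_k^{1-d}$ by a union bound, upgrades this to a $k$-uniform probability $c'>0$ by independence across $y\in(3\Z)^{d-1}$, and then invokes the second Borel--Cantelli lemma after arguing that the resulting events are independent across $k$ using $L_{k+1}>2L_k$. You instead run a second-moment method: both $\E N_k$ and $\E N_k^2$ are of order one uniformly in $k$ (the off-diagonal being controlled by the exact factorization $\BbbP(\ell(x)=k,\ell(x')=k)=\BbbP(\ell(x)=k)\BbbP(\ell(x')=k)\le L_k^{-2d}$), Paley--Zygmund gives $\BbbP(E_k)\ge\delta>0$, and then you replace the independence argument by the observation that $E_k\in\sigma(\xi_{L_j}\colon j\ge k)$, so $\limsup_k E_k$ lies in the tail $\sigma$-algebra of the independent sequence of fields $(\xi_{L_j})_j$ and Kolmogorov's $0$--$1$ law applies; reverse Fatou finishes. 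Your measurability claim is correct: $\{\ell(x)=k\}$ involves only scales $\ge k$, and $\{m(x)<k\}$ is the requirement that $\xi_{L_i}(x+z)=0$ for $z\in\Lambda_{L_j}$ and $i\ge j\ge k$, again only scales $\ge k$. The tail-triviality route is more robust than Borel--Cantelli: it dispenses entirely with the delicate claim that the shells for distinct $k$ involve disjoint $\xi$-coordinates, and it is quite right that the hypothesis $L_{k+1}>2L_k$ is not used anywhere in your argument, which is a small but genuine simplification over the paper's proof.
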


\begin{proofsect}{Proof}
The definition of~$\ell(x)$ gives
\begin{equation}
\BbbP\bigl(\ell(x)=k\bigr)=L_k^{-d}\prod_{j>k}(1-L_j^{-d}).
\end{equation}
This yields immediately the upper bound in \eqref{E:4.11}.
On  the  other hand, the fact that $\{L_k\}$ is non-decreasing shows
\begin{equation}
\BbbP\bigl(m(x)<k\bigr)=\biggl(\,\prod_{j\ge k}(1-L_j^{-d})\biggr)^{|\Lambda_{L_k}|}\,\prod_{j>k}\Biggl(\,\biggl(\,\prod_{r\ge j}(1-L_r^{-d})\biggr)^{|\Lambda_{L_j}\smallsetminus\Lambda_{L_{j-1}}|}\Biggr).
\end{equation}
By $|\Lambda_L|=O(L)$, the fact that $L_2>1$ and the summability of $kL_k^{1-d}$, both terms in the parentheses are bounded from below by a positive constant uniformly in $k\ge2$. Since $m(x)$ and~$\ell(x)$ are independent we get the lower bound in \eqref{E:4.11} as well.

For the second part, recall that we regard~$\Z^{d-1}$ as the linear span of~$\{\hate_2,\dots,\hate_d\}$ over the ring of integers. Given $y\in\Z^{d-1}$ and $j\in\Z$, define
\begin{equation}
G_k(y,j):=\bigl\{m(y+j\hate_1)<\ell(y+j\hate_1)=k\bigr\}.
\end{equation}
We observe that, by \eqref{E:4.10}, we have $G_k(y,j)\cap G_k(y,j')=\emptyset$ as long as $0<|j-j'|\le 3L_k$. Hence, invoking also the lower bound in \eqref{E:4.11}, we get
\begin{equation}
\BbbP\Bigl(\bigcup_{j={L_k}}^{2L_k}G_k(y,j)\Bigr)
=\sum_{j=L_k}^{2L_k}\BbbP\bigl(m(y+j\hate_1)<\ell(y+j\hate_1)=k\bigr)
\ge c L_k^{1-d}.
\end{equation}
Moreover, the giant unions are for distinct $y\in(3\Z)^{d-1}$ independent. Hence, we get
\begin{equation}
\label{E:5.16a}
\BbbP\biggl(\,\bigcup_{\begin{subarray}{c}
y\in(3\Z)^{d-1}\\L_k\le|y|_\infty\le 2L_k
\end{subarray}}
\bigcup_{j={L_k}}^{2L_k}G_k(y,j)\biggr)\ge c'>0
\end{equation}
for some $c'$ independent of~$k$.

Now observe that the union in \eqref{E:5.16a} is a subset of the event \eqref{E:5.12a}. Also note that, as soon as we have $L_{k+1}>2L_k$, the unions in \eqref{E:5.16a} use, for distinct $k$'s, disjoint sets of underlying coordinates~$\{\xi_L(x)\colon L\ge1,\,x\in\Z^d\}$ and are thus  independent of one another. By the second Borel-Cantelli lemma, the event in \eqref{E:5.12a} occurs for infinitely many~$k$ a.s.
\end{proofsect}

Let us introduce the shorthand
\begin{equation}
\label{E:eta-def}
\kappa(x):=\IA_{\{m(x)<\ell(x)\}}
\end{equation}
and note that $\{\kappa(x)\}$ is a stationary, ergodic process with a positive density of 1's.
The following observation will turn out to be quite useful:

\begin{lemma}
\label{lemma-4.2}
Given $x\in\Z^d$, let $E_L(x)$ denote the set of (nearest-neighbor) edges incident with vertices in $\{x+j\hate_1\colon j=0,\dots,L\}$. Then
\begin{equation}
x\ne\tilde x\quad\&\quad\kappa(x)=1=\kappa(\tilde x)
\quad\Rightarrow\quad E_{L_{\ell(x)}}(x)\cap E_{L_{\ell(\tilde x)}}(\tilde x)=\emptyset.
\end{equation}
\end{lemma}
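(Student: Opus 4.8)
The plan is to argue by contradiction, exploiting the monotonicity property \eqref{E:4.10}. Since the asserted disjointness is symmetric under exchanging $x$ and $\tilde x$, I would first assume without loss of generality that $\ell(x)\le\ell(\tilde x)$ and abbreviate $k:=\ell(x)$, $\tilde k:=\ell(\tilde x)$, so that $L_k\le L_{\tilde k}$ by monotonicity of $\{L_j\}$. Suppose, towards a contradiction, that some edge $e$ lies in $E_{L_k}(x)\cap E_{L_{\tilde k}}(\tilde x)$. The goal is to show that then $\tilde x-x\in\Lambda_{L_{\tilde k}}$, which together with $\kappa(\tilde x)=1$ and \eqref{E:4.10} will force $m(x)\ge\ell(\tilde x)>\ell(x)$, contradicting $\kappa(x)=1$, i.e.\ $m(x)<\ell(x)$.

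The first step is to locate the endpoints of $e$: by definition of $E_L(\cdot)$, the edge $e$ has an endpoint $u=x+j\hate_1$ with $0\le j\le L_k$ and an endpoint $w=\tilde x+\tilde j\hate_1$ with $0\le\tilde j\le L_{\tilde k}$; since a nearest-neighbor edge has exactly two endpoints, $u$ and $w$ are either equal or adjacent, so $\delta:=w-u$ is either $0$ or a unit coordinate vector, and hence $\tilde x-x=(j-\tilde j)\hate_1+\delta$. The second step is the bookkeeping that places this vector into $\Lambda_{L_{\tilde k}}$. From $0\le j\le L_k\le L_{\tilde k}$ and $0\le\tilde j\le L_{\tilde k}$ one gets $|j-\tilde j|\le L_{\tilde k}$; then if $\delta=0$ or $\delta=\pm\hate_i$ with $i\ge2$ the displacement is $p\hate_1+\delta$ with $|p|\le L_{\tilde k}\le 3L_{\tilde k}$ and $\delta\in\Z^{d-1}$, $|\delta|\le1$, while if $\delta=\pm\hate_1$ the displacement equals $(j-\tilde j\pm1)\hate_1$ with $|j-\tilde j\pm1|\le L_{\tilde k}+1\le3L_{\tilde k}$ (using $L_{\tilde k}\ge L_1=1$). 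In every case, and since $\tilde x-x\ne0$ because $x\ne\tilde x$, this shows $\tilde x-x\in\Lambda_{L_{\tilde k}}=\Lambda_{L_{\ell(\tilde x)}}$; by the obvious symmetry $\Lambda_L=-\Lambda_L$ also $x-\tilde x\in\Lambda_{L_{\ell(\tilde x)}}$. The third step is then immediate: as $\kappa(\tilde x)=1$ gives $m(\tilde x)<\ell(\tilde x)$, applying \eqref{E:4.10} at $\tilde x$ with $z:=x-\tilde x$ yields $m(x)=m(\tilde x+z)\ge\ell(\tilde x)>\ell(\tilde x+z)=\ell(x)$, so $m(x)\ge\ell(\tilde x)>\ell(x)$, contradicting $m(x)<\ell(x)$. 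Hence no such edge $e$ can exist.

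The only mildly delicate point is the case analysis in the second step: one must verify that the displacement vector still lands in $\Lambda_{L_{\tilde k}}$ even when the unit vector $w-u$ points along $\hate_1$, which is precisely why having the factor $3$ (rather than $1$) in the definition of $\Lambda_L$ is convenient. Everything else is a direct consequence of \eqref{E:4.10}, so I do not anticipate any genuine obstacle.
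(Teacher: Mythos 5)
Your proof is correct and follows essentially the same contradiction-via-\eqref{E:4.10} strategy as the paper, with some extra care that in fact repairs a small imprecision in the paper's own argument. The paper asserts that a shared edge forces \emph{both} inclusions $x\in\tilde x+\Lambda_{L_{\ell(\tilde x)}}$ \emph{and} $\tilde x\in x+\Lambda_{L_{\ell(x)}}$ and then derives the incompatible pair $\ell(\tilde x)>\ell(x)$ and $\ell(x)>\ell(\tilde x)$. But the second inclusion can fail when $\ell(\tilde x)$ and $\ell(x)$ differ: the $\hate_1$-displacement between $x$ and $\tilde x$ can be as large as $L_{\ell(\tilde x)}+1$, which need not lie in $[-3L_{\ell(x)},3L_{\ell(x)}]$ once $L_{\ell(\tilde x)}>3L_{\ell(x)}-1$ (which happens already when $\ell(\tilde x)\ge\ell(x)+2$ under the growth $L_{j+1}>2L_j$). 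Your WLOG $\ell(x)\le\ell(\tilde x)$ isolates the \emph{one} inclusion $x-\tilde x\in\Lambda_{L_{\ell(\tilde x)}}$ that is always guaranteed, and a single application of \eqref{E:4.10} at $\tilde x$ already gives $m(x)\ge\ell(\tilde x)>\ell(x)$, contradicting $\kappa(x)=1$; the second inclusion is not needed. Your bookkeeping, including the case $w-u=\pm\hate_1$ (where one needs $L_{\tilde k}+1\le3L_{\tilde k}$, i.e.\ $L_{\tilde k}\ge1$), is correct, and you rightly observe that $\Lambda_L=-\Lambda_L$ so either orientation of the displacement can be fed into \eqref{E:4.10}. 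In short: same strategy, but your version is the rigorous one.
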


\begin{proofsect}{Proof}
If $E_{L_{\ell(x)}}(x)\cap E_{L_{\ell(\tilde x)}}(\tilde x)\ne\emptyset$ and $x\ne\tilde x$, then $x\in \tilde x+\Lambda_{L_{\ell(\tilde x)}}$ and $\tilde x\in x+\Lambda_{L_{\ell(x)}}$. But then \eqref{E:4.10} and \eqref{E:eta-def} yield
$m(x)\ge\ell(\tilde x)>\ell(x)$ and, similarly, $m(\tilde x)\ge\ell(x)>\ell(\tilde x)$, a contradiction.
\end{proofsect}

\begin{proofsect}{Proof of Theorem~\ref{thm-2.8}}
Let $p,q\ge1$ be numbers such that \eqref{E:2.8} holds and let $p'>p$ and $q'>q$ be such that we still have
\begin{equation}
\label{E:5.19}
\frac1{p'}+\frac1{q'}>\frac2{d-1}.
\end{equation}
(This is where we need to require $d\ge 3$.) Define sequences
\begin{equation}
\label{E:5.20}
a_L:=L^{-(d-1)/q'}\quad\text{and}\quad b_L:=L^{(d-1)/p'}.
\end{equation}
Consider the construction given above with~$\{L_k:k\ge1\}$ such that $L_{k+1}>2L_k$ and $L_1:=1$ so that all objects $\ell(x)$, $m(x)$ and $\kappa(x)$ are well defined. Given an~$x$ with $\kappa(x)=1$, denote $k:=\ell(x)$ and consider the set of edges $E_{L_k}(x)$ incident with at least one vertex in~$\{x+j\hate_1\colon j=0,\dots, L_k\}$. Set the conductance to $b_{L_k}$ on edges with both endpoints in this set and to $a_{L_k}$ to those with only one endpoint in this set. Thanks to Lemma~\ref{lemma-4.2}, the conductance of each edge is set at most once so no conflict can arise. We set the conductance on edges not in $\bigcup \{E_{L_{\ell(x)}}\colon \kappa(x)=1\}$ to one.

The resulting configuration of conductances is a measurable function of $\{\xi_L(x)\colon L\ge1,\,x\in\Z^d\}$ and, since this family is stationary and ergodic with respect to shifts, so is the induced conductance law. Let us check that the integrability conditions \eqref{E:2.9a} hold. Fix any~$x$ with~$|x|=1$.
Noting that $E_{L_k}(z)$ contains $L_k$ edges of conductance $b_{L_k}$ and $R_k:=2+(2d-2)(L_k+1)$ edges of conductance~$a_{L_k}$, we have
\begin{equation}
\E (C_{0,x}^p)\le 1+\sum_{k\ge1}L_k^{-d}\bigl(\,L_k(b_{L_k})^p+R_k\bigl(a_{L_k})^p\bigr).
\end{equation}
Plugging in \eqref{E:5.20}, invoking that $p'>p$ and~$q'>q$ and using that $\{L_k\}$ grows exponentially, we get $C_{0,x}\in L^p(\BbbP)$ as desired. Similarly,
\begin{equation}
\E (C_{0,x}^{-q})\le 1+\sum_{k\ge1}L_k^{-d}\bigl(\,L_k(b_{L_k})^{-q}+R_k\bigl(a_{L_k})^{-q}\bigr),
\end{equation}
which is again finite by \eqref{E:5.20}, our choices of~$p'$ and $q'$ and the exponential growth of the sequence~$\{L_k\}$.

Now let us move to the violation of sublinearity of the corrector. Suppose the event \eqref{E:5.12a} occurs at some $x$ with $L_k\le|x|_\infty\le 2L_k$. The conductances $C_{yz}$ on edges $\langle y,z\rangle\in E_{L_k}(x)$ then take values $a_{L_k}$ and $b_{L_k}$ as specified above.
Denote by  $D:=\{x+j\hate_1\colon j=0,\dots,L_k\}$
the corresponding set of vertices (which depends on~$x$) and let
\begin{equation}
\EE_D(f):=\sum_{\langle y,z\rangle\in E_{L_k}(x)}\!C_{yz}\,\bigl|f(y)-f(z)\bigr|^2
\end{equation}
be the Dirichlet energy for a ($\R^d$-valued) function~$f$ on~$D$. The ``harmonic coordinate''~$\Psi$ from \eqref{E:2.5} solves the Dirichlet problem on~$D$ and so $f:=\Psi$ has minimal~$\EE_D(f)$ among all functions that agree with $\Psi$ on the external boundary~$\partial D$ of~$D$.

We now derive bounds on $\EE_D(\Psi)$. To get a lower bound, we fix the values at~$x$ and~$x+L_k\hate_1$ and set all conductances on edges with only one endpoint in~$D$ to zero. Optimizing the remaining values is now a one-dimensional problem whose simple solution yields
\begin{equation}
\EE_D(\Psi)\ge b_{L_k}L_k^{-1}\bigl|\Psi(x+L_k\hate_1)-\Psi(x)\bigr|^2.
\end{equation}
For the upper bound, we take the test function $f$ that equals $\Psi(x)$ everywhere on~$D$. This gives
\begin{equation}
\EE_D(\Psi)\le a_{L_k}\,\sum_{y\in\partial D}\bigl|\Psi(y)-\Psi(x)\bigr|^2.
\end{equation}
Let us now see that this is not compatible with sublinearity of the corrector. Indeed, if \eqref{E:chi-bd} were true, then the fact that $D\cup\partial D\subset[-3L_k,3L_k]^d$ yields
\begin{equation}
\bigl|\Psi(y)-\Psi(x)\bigr|\le (1+6\epsilon)L_k+2K,\qquad y\in\partial D,
\end{equation}
while
\begin{equation}
\bigl|\Psi(x+L_k\hate_1)-\Psi(x)\bigr|\ge (1-6\epsilon)L_k-2K.
\end{equation}
But that contradicts the fact, implied by \eqref{E:5.19}, that $a_{L_k}L_k^2|\partial D|\ll b_{L_k} L_k$ once $k$ is sufficiently large. Hence we cannot have \eqref{E:chi-bd} and, at the same time, the event in \eqref{E:5.12a} to occur for~$k$ large. Lemma~\ref{lemma-4.1} implies that \eqref{E:chi-bd} fails for all~$\epsilon>0$ and all~$K<\infty$ a.s.
\end{proofsect}

\section*{Acknowledgments}
\noindent
 We thank
anonymous referees for their helpful comments and corrections.
This research has been supported by National Science Foundation (US) awards DMS-1712632 and DMS-1954343, the
National Natural Science Foundation of China (No.\ 11871338), JSPS KAKENHI Grant Number JP17H01093, the Alexander von Humboldt Foundation, the National
Natural Science Foundation of China (Nos.\ 11831014 and 12071076),
the Program for Probability and Statistics: Theory and Application (No.\ IRTL1704) and the Program for Innovative Research Team in Science and Technology in Fujian Province University (IRTSTFJ). The non-Kyoto based authors would like to thank RIMS at Kyoto University for hospitality that made this project possible.

A version of this paper by two of the present authors was previously posted on arXiv~\cite{BK-failed} and was subsequently withdrawn due to an error in one of the key arguments. The present paper subsumes the parts of~\cite{BK-failed} that are worth saving.

%
%\vskip 0.3truein
%{\small
%{\bf Marek Biskup:}
%   Department of Mathematics, UCLA, Los Angeles, California, USA.
%   \newline \texttt{biskup@math.ucla.edu}
%
%\bigskip
%
%{\bf Xin Chen:}
%   Department of Mathematics, Shanghai Jiao Tong University, 200240 Shanghai, P.R. China. \texttt{chenxin217@sjtu.edu.cn}
%	
%\bigskip
%	
%
%{\bf Takashi Kumagai:}
% Research Institute for Mathematical Sciences,
%Kyoto University, Kyoto 606-8502, Japan.
%\texttt{kumagai@kurims.kyoto-u.ac.jp}
%
%\bigskip
%
%{\bf Jian Wang:}
%    College of Mathematics and Informatics \& Fujian Key Laboratory of Mathematical Analysis and Applications, Fujian Normal University, 350007 Fuzhou, P.R. China.
%     \texttt{jianwang@fjnu.edu.cn}}
		
\end{document}